\newcommand{\df}{\buildrel\mathrm{def}\over=}
\newcommand{\Bell}{\boldsymbol{B}}
\newcommand{\RR}{\mathrm R}
\newcommand{\LL}{\mathrm L}
\newcommand{\eps}{\varepsilon}
\newcommand{\vf}{\varphi}
\newcommand{\vk}{\varkappa}
\newcommand{\R}{\mathbb R}
\newcommand{\Adm}{\mathrm{Adm}}
\newcommand{\const}{\mathrm{const}}
\newcommand{\cF}{\mathcal F}
\newcommand{\bE}{\mathbb{E}}
\newcommand{\bP}{\mathbb{P}}
\newcommand{\half}{\tfrac12}
\newcommand{\eq}[2]{\begin{equation}\label{#1} #2 \end{equation}}
\newcommand{\scalp}[2]{}
\newtheorem{Le}{Lemma}[section]
\newtheorem{Prop}[Le]{Proposition}
\newtheorem{Th}[Le]{Theorem}
\newtheorem{Cor}[Le]{Corollary}
\theoremstyle{definition}
\newtheorem{Def}[Le]{Definition}
\newtheorem{Rem}[Le]{Remark}
\numberwithin{equation}{section}
\newcommand{\ti}[1]{_{\scriptscriptstyle\text{\rm #1}}}
\newcommand{\ut}[1]{^{\scriptscriptstyle\text{\rm #1}}}
\newcommand{\Oe}{\Omega_\eps}
\newcommand{\Om}[1]{\Omega\ut{#1}_\eps}
\newcommand\mytitle{Some extremal problems for martingale transforms, I}
\newcommand\mytitleshort{Extremal problems for martingale transforms}
\begin{document}
\title[\mytitleshort]{\mytitle}

\author{V.~I.~Vasyunin, P.~B.~Zatitskii}

\subjclass[2020]{Primary 42B35, 60G42, 60G46}
\keywords{Bellman function, martingale transform, diagonally concave function}

\begin{abstract}
With this paper, we begin a series of studies of extremal problems for estimating distributions of
martingale transforms of bounded martingales. The Bellman functions corresponding to such problems
are pointwise minimal diagonally concave functions on a horizontal strip, satisfying certain given boundary conditions. We describe the basic structures that arise when constructing such functions and present a solution in the case of asymmetric boundary conditions and a sufficiently small width of the strip.
\end{abstract}

\thanks{This work is supported by the Russian Science Foundation, grant number 19-71-10023.}
\maketitle

\section{Introduction}
With this article, we start a series of papers in which we are going to describe the algorithm for constructing Bellman functions for extremal problems described below. We are inspired by the successful description of the algorithm for constructing Bellman functions for a sufficiently wide class of integral extremal problems on BMO that is realized in~\cite{IOSVZ2016,ISVZ2018}, and also by a recent generalization of these results to more general classes of functions, including among others Muckenhoupt classes, see~\cite{ISVZ2023}. The original extremal problem on an infinite-dimensional functional space is reduced to the problem of constructing the (pointwise) minimal locally concave function in the corresponding two-dimensional domain (parabolic strip in the case of BMO) with a given boundary condition, see~\cite{SZ2016}. The minimal locally concave function turns out to be linear along some direction at each interior point of its domain. Thus, this two-dimensional domain is foliated by linear segments or two-dimensional subdomains, and we call such a structure a foliation. If the foliation is constructed, it is possible to reconstruct the Bellman function and obtain sharp estimates for the original extremal problem.

A similar approach is used to obtain sharp estimates in various questions of martingale theory (and for more general random processes). It goes back to~\cite{Burkholder1984} and is called the Burkholder method. Many examples of usage of this method and a literature review on the topic can be found in~\cite{Adam}. When studying estimates of martingale transformations, instead of locally concave functions the so-called diagonally concave functions arise --- those are concave along line segments of the form $x_1\pm x_2=\text{const}$\footnote{In Burkholder's papers, such functions are called zig-zag concave.}.

Recently, it was discovered that a certain class of extremal problems for martingale transforms is closely related to corresponding extremal problems on BMO. The Burkholder method reduces the problem of estimating the distribution of a martingale transform of a bounded martingale to finding the minimal diagonally concave function on a horizontal strip, satisfying a certain symmetric boundary condition on the upper and lower boundaries of the strip. It turns out that the structure of such a function in some sense coincides with the structure of the minimal locally concave function on a parabolic strip with the corresponding boundary data; details can be found in~\cite{SymStr}. After discovering this connection, a desire to consider a broader class of problems on a~horizontal strip arose naturally, where arbitrary, not necessarily symmetric, boundary values are specified on the boundaries of the strip. When solving this problem, we were guided by two considerations. First, such a generalization seemed to us the most natural and was dictated by the logic of the development of the theory. That is, we are interested in understanding how to construct the Bellman function not for some specific problem but to understand the algorithm that allows constructing the necessary function for a wide class of problems. Since we were guided by internal tasks of method development, we were not looking for specific applications. Although there will be many examples in this series of papers, all of them are examples of applying the theory but are not external problems. Second, since there is considerable interest in various estimates of martingale transforms (see~\cite{Adam}), we do not lose hope that our theory may find application in the future for some external problems. However, we ask the reader not to seek such applications in the proposed text.

\bigskip

Let's proceed with the formal statement of the problem. Let $(S, \cF, \bP)$ be a standard probability space, and $(\cF_n)_{n=0}^\infty$ be a dense filtration in $\cF$ with $\cF_0$ being a trivial algebra. Let $(\vf_n, \cF_n)_{n=0}^\infty$ and $(\psi_n, \cF_n)_{n=0}^\infty$ be two martingales on this space generated by integrable functions $\vf, \psi \in L^1(S, \cF, \bP)$:
$$
\vf_n = \bE(\vf\mid \cF_n),\qquad 
\psi_n = \bE(\psi\mid \cF_n).
$$
Let $d\vf_n = \vf_{n}-\vf_{n-1}$ and $d\psi_n = \psi_{n}-\psi_{n-1}$ for $n\geq 1$ be their martingale differences. Due to the triviality of $\cF_0$, we have $\vf_0=\bE\vf$, $\psi_0=\bE\psi$. We will say that the martingale $(\psi_n)$ is a martingale transformation of the martingale $(\vf_n)$ if there exists a sequence of functions $(\alpha_n)_{n=1}^\infty$ on $(S, \cF, \bP)$ such that  $\alpha_n$ is measurable with respect to $\cF_{n-1}$ and $d\psi_n = \alpha_n d\vf_n$ for each $n\geq 1$. In what follows, we restrict ourselves to the case when $|\alpha_n|=1$ almost everywhere.

\begin{Def}
For a given $\eps>0$, consider the following two-dimensional domain
\eq{170502}{
\Oe\df\{x=(x_1,x_2)\colon -\infty<x_1<\infty,\ -\eps\le x_2\le\eps\}.
}
A pair of functions $(\vf,\psi)$ generating martingales $(\vf_n)$ and $(\psi_n)$ is called admissible for a point $x\in\Oe$ if $\vf_0=x_1$, $\psi_0 = x_2$, and $|\psi|=\eps$ almost everywhere. The set of all admissible pairs for $x$ is denoted by $\Adm_\eps(x)$.
\end{Def}

\begin{Def}
Let $f_\pm\colon \mathbb{R}\to\mathbb{R}$ be two measurable functions. Set $f(x_1,\pm\eps)=f_\pm(x_1)$, $x_1 \in \mathbb{R}$. Define the function $\Bell$ on $\Oe$ by the formula
\eq{170503}{
\Bell(x) = \sup\Big\{\bE f(\vf, \psi) \colon (\vf,\psi) \in \Adm_\eps(x)\Big\},\qquad x \in \Oe.
}
\end{Def}

The main goal of this study is to construct the Bellman function $\Bell$ for given $f_\pm$. By solving this problem, we provide a description of the joint distribution of the function $\vf$ and its martingale transformation $\psi$. In a recent work~\cite{SymStr}, this problem is solved for the case of symmetric boundary conditions: a description of $\Bell$ is given under the condition $f_+=f_-$.

\begin{Def}
A function defined on a two-dimensional domain is called diagonally concave if it is concave in the directions $x_2\pm x_1=\text{const}$.
\end{Def}

The following theorem goes back to D. Burkholder (the proof can be found, for example, in Chapter 2 of the book~\cite{Adam}).
\begin{Th}\label{th170901}
The function $\Bell$ is diagonally concave on the strip $\Omega_\eps$ and satisfies the boundary conditions
\eq{050501}{
\Bell(x_1,\pm\eps)=f_\pm(x_1), \qquad x_1 \in \mathbb{R}.
}
Moreover\textup, it is the pointwise minimal among all functions satisfying these properties.
\end{Th}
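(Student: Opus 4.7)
The plan is to handle the three assertions --- boundary identities, diagonal concavity, and pointwise minimality --- separately; the first two are structural, while the third is a Burkholder-type supermartingale argument that does the real work. I start with the \emph{boundary values}. Fix $x = (x_1, \eps)$ and any $(\vf, \psi) \in \Adm_\eps(x)$. Since $|\psi| = \eps$ a.s.\ and $\bE\psi = \psi_0 = \eps$, the random variable $\psi$ must equal $\eps$ a.s.; hence $\psi_n = \bE(\psi \mid \cF_n) \equiv \eps$, so $d\psi_n = 0$, and $d\psi_n = \alpha_n d\vf_n$ with $|\alpha_n| = 1$ forces $d\vf_n = 0$. Thus $\vf_n \equiv x_1$, and by density of the filtration $\vf \equiv x_1$, giving $\bE f(\vf, \psi) = f_+(x_1)$ and $\Bell(x_1, \eps) = f_+(x_1)$; the case $x_2 = -\eps$ is symmetric.

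For \emph{diagonal concavity}, fix $x \in \Oe$, $\sigma \in \{-1, +1\}$, and $t^\pm > 0$ with $y^\pm := x \pm t^\pm(1, \sigma) \in \Oe$, so that $x = p y^+ + (1-p) y^-$ for $p = t^-/(t^+ + t^-)$. Given $(\vf^\pm, \psi^\pm) \in \Adm_\eps(y^\pm)$, I would splice them at level~$1$: choose $A \in \cF_1$ of measure $p$ (available by standardness of the space and density of the filtration), declare $d\vf_1 = t^+ \mathbf{1}_A - t^- \mathbf{1}_{A^c}$ and $\alpha_1 = \sigma$, and on the branches $A$ and $A^c$ attach conditional copies of $(\vf^+, \psi^+)$ and $(\vf^-, \psi^-)$ from time~$1$ on. The resulting $(\vf, \psi)$ is admissible for $x$ with $|\psi|=\eps$, and $\bE f(\vf, \psi) = p\,\bE f(\vf^+, \psi^+) + (1-p)\,\bE f(\vf^-, \psi^-)$. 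Taking suprema over both right-hand terms yields $\Bell(x) \ge p\,\Bell(y^+) + (1-p)\,\Bell(y^-)$, the desired diagonal concavity.

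For \emph{minimality}, let $G$ be diagonally concave on $\Oe$ with $G(\cdot, \pm\eps) = f_\pm$; fix $(\vf, \psi) \in \Adm_\eps(x)$ and set $M_n = (\vf_n, \psi_n)$. The increment $dM_n = d\vf_n \cdot (1, \alpha_n)$ lies along the $\cF_{n-1}$-measurable diagonal direction $(1, \alpha_n)$, so diagonal concavity of $G$ along this line plus conditional Jensen (using $\bE[d\vf_n \mid \cF_{n-1}] = 0$) gives $\bE[G(M_n) \mid \cF_{n-1}] \le G(M_{n-1})$. Thus $G(M_n)$ is a supermartingale and $G(x) \ge \bE G(M_n)$ for every $n$. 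Since $\psi \in \{-\eps, +\eps\}$ a.s.\ we have $G(\vf, \psi) = f(\vf, \psi)$, so letting $n \to \infty$ and taking supremum over admissible pairs produces $G \ge \Bell$. The \emph{main obstacle} is precisely this limit transition $\bE G(M_n) \to \bE G(\vf, \psi)$: it is not free and requires uniform integrability or a truncation of $\vf$, supported by integrability/regularity hypotheses on $G$ and $f_\pm$. Everything else is either the splicing construction or direct boundary bookkeeping; this convergence is the technical heart of Burkholder's method, developed in detail in \cite{Adam}.
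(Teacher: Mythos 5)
The paper does not reproduce a proof of this theorem, delegating it instead to Chapter~2 of~\cite{Adam}; your argument is precisely the standard Burkholder-method proof one finds there, with the same three-part structure (boundary degeneracy, level-one splicing for diagonal concavity, supermartingale bound for minimality). You correctly isolate the limit passage $\bE\,G(M_n)\to\bE\,G(\vf,\psi)$ as the only genuinely technical step; in~\cite{Adam} and in the present framework this is controlled by the growth/integrability hypotheses on $f_\pm$ (the paper imposes $|f_\pm(x_1)|\le Ce^{|x_1|/\tilde\eps}$), which propagate to $G$ via diagonal concavity and make a truncation or uniform-integrability argument go through.
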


\begin{Def}
A function $B$ defined on a subdomain $\Omega$ of $\Oe$ is called a Bellman candidate (a~candidate for the Bellman function) on $\Omega$ if the following conditions are satisfied:
\begin{enumerate}
\item $B$ is diagonally concave on $\Omega$;
\item $B$ satisfies the boundary conditions~\eqref{050501} on $\Omega \cap \partial \Oe$;
\item for any point $y$ in $\Omega$, one of the following conditions holds:
\begin{itemize}
\item
there exists a straight line segment going in one of the directions $x_1\pm x_2=\text{const}$, that connects $y$ with some point on $\partial \Oe$, where the function $B$ is linear;
\item 
the function $B$ is linear in both directions $x_1\pm x_2=\text{const}$ in a neighborhood  of $y$.
\end{itemize}
\end{enumerate}
\end{Def}

In a recent paper~\cite{Misha}, M. I. Novikov proved a remarkable theorem that allows one to verify that a given diagonally concave function is pointwise minimal among all such functions with the same boundary values. We present its simplified version, that is Theorem 1.5.3 from~\cite{SymStr}.

\begin{Def}
Let $G\colon \Oe\to \R$ be a diagonally concave function. We say that $G$ is extremal in the direction $(1,\pm 1)$ at $x$ if $x$ is the endpoint of some segment $\ell$, the other endpoint of $\ell$ lies on $\partial\Oe$, $\ell$ is parallel to $(1,\pm 1)$, the function $G|_\ell$ is linear, and $G$ is differentiable in the direction $(1,\pm 1)$ at~$x$.
\end{Def}

\begin{Th}[\cite{Misha}]\label{MishasTh}
Let a function $G\colon \Oe\to \mathbb{R}$ be diagonally concave and upper semi-continuous with a discrete set of discontinuities. Let $|G(x)| \lesssim e^{|x_1|/{\tilde\eps}},$ $x \in \Oe,$ for some $\tilde\eps > \eps$. Assume that for each interior point $x\in \Oe$ one of the following conditions holds\textup:
\begin{enumerate} 
\item[\textup{1)}] $G$ is linear and extremal in the direction $(1,1)$ at $x$\textup;
\item[\textup{2)}] $G$ is linear and extremal in the direction $(1,-1)$ at $x$\textup;
\item[\textup{3)}] $G$ is extremal in both directions $(1,\pm 1)$ at $x$\textup;
\item[\textup{4)}] $G$ is linear in both directions $(1,\pm 1)$ in a neighborhood of $x$.
\end{enumerate}
Then $G$ is pointwise minimal among all diagonally concave functions with the same boundary values.
\end{Th}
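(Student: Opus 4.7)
The plan is the standard Bellman duality argument. Fix any diagonally concave function $\tilde G$ on $\Oe$ with the same boundary values $f_\pm$; the goal is to show $G(x_0)\le\tilde G(x_0)$ for every interior $x_0\in\Oe$. The idea is to use the foliation structure of $G$ to build an admissible pair $(\vf,\psi)\in\Adm_\eps(x_0)$ along which $G$ is preserved in expectation, so that $\bE\,f(\vf,\psi)=G(x_0)$ (possibly up to a vanishing slack). Since $\tilde G$ is diagonally concave and $\tilde G=f$ on $\partial\Oe$, iterating the diagonal concavity inequality along the martingale transform steps $d\psi_n=\alpha_n\,d\vf_n$ with $|\alpha_n|=1$ (each step moves in one of the directions $(1,\pm1)$, where $\tilde G$ is concave), and then using $|\psi|=\eps$ a.s.\ at the terminal stage, yields $\tilde G(x_0)\ge\bE\,\tilde G(\vf,\psi)=\bE\,f(\vf,\psi)=G(x_0)$.

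To construct $(\vf,\psi)$ I would run a step-by-step procedure dictated by the type of the running point. At a type~1) point $x$, extremality combined with differentiability of $G$ in direction $(1,1)$ produces a segment $\ell$ parallel to $(1,1)$, with one endpoint $y^+\in\partial\Oe$ and $x$ strictly interior to $\ell$; since $G$ is affine on $\ell$, a single martingale step with $\alpha=+1$ splitting $x=\lambda y^++(1-\lambda)y^-$ absorbs a mass $\lambda$ into the boundary while keeping $\bE\,G$ unchanged. Type~2) is symmetric, and at a type~3) point either extremal segment provides such a split. At a type~4) point $G$ is affine in a neighborhood, so every small martingale step preserves $G$; the directions of the steps should then be chosen so that the process eventually leaves the flat region and falls back into one of the previous three cases. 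Iterating produces a martingale pair $(\vf_n,\psi_n)$ with $\bE\,G(\vf_n,\psi_n)=G(x_0)$ at every finite stage and with an ever-larger portion of the mass absorbed into $\{|\psi|=\eps\}$.

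The hard part will be the passage $n\to\infty$. Geometrically one must show that the mass still inside $\{|\psi|<\eps\}$ at stage $n$ tends to zero: a quantitative form of the observation that each type~1)--3) step absorbs a definite fraction into the boundary, together with a proof that sojourns in type~4) regions are almost surely finite. Analytically, one must justify the limit $\bE\,G(\vf_n,\psi_n)\to\bE\,f(\vf,\psi)$; this is exactly what the growth hypothesis $|G(x)|\lesssim e^{|x_1|/\tilde\eps}$ with $\tilde\eps>\eps$ is designed for, as $|\psi_n|\le\eps$ forces the $x_1$-marginal of the process to obey an exponential tail bound of John--Nirenberg type, making $\{G(\vf_n,\psi_n)\}$ uniformly integrable. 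Upper semi-continuity with a discrete discontinuity set is then used to identify the limiting value of $G$ along the sample paths, since at an isolated discontinuity only the inequality $G(x)\ge\limsup_{y\to x}G(y)$ is available, which is precisely the direction compatible with passing $\bE\,G(\vf_n,\psi_n)=G(x_0)$ to a $\limsup$. Making the geometric mass-absorption statement and the uniform integrability estimate fully rigorous is what I expect to be the technical heart of the proof in~\cite{Misha}.
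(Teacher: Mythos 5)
The paper does not contain a proof of this theorem: it is quoted from Novikov's article \cite{Misha} (in the simplified formulation of \cite{SymStr}), so there is no in-paper argument against which to compare your attempt; I can only assess the attempt on its own terms.

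The overall strategy you outline — build an ``extremal'' martingale pair $(\vf,\psi)\in\Adm_\eps(x_0)$ whose expectation of the boundary data reproduces $G(x_0)$, then use diagonal concavity of any competitor (equivalently Theorem~\ref{th170901} applied to $\Bell$) to force $G(x_0)\le \tilde G(x_0)$ — is the natural Burkholder/Bellman duality route, and the uniform-integrability role you assign to the exponential growth hypothesis and the role you assign to upper semi-continuity with discrete discontinuities are both sensibly identified.

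There is, however, a concrete gap in the step-by-step construction at type~3) points, and your sketch asserts something false there. Extremality only says that $x$ is an \emph{endpoint} of each segment $\ell^\pm$ whose other endpoint lies on $\partial\Oe$, with $G|_{\ell^\pm}$ affine and $G$ differentiable at $x$ in the corresponding direction. In cases~1) and 2) the additional \emph{linearity} hypothesis near $x$, combined with differentiability, lets you extend $\ell$ through $x$ to a segment on which $G$ is affine and $x$ is interior, and only then is a loss-free martingale split $x=\lambda y^+ +(1-\lambda)y^-$ available. In case~3) there is no such linearity assumption, so ``either extremal segment provides such a split'' is not correct: any split along the line $x+t(1,1)$ must put one child $z$ on the far side of $x$ from the boundary endpoint $y$ of $\ell^+$, where $G$ is only known to be concave, whence $\lambda G(y)+(1-\lambda)G(z)<G(x)$ in general. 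Type~3) points are not exceptional — they occur exactly along the spines of herringbones and other interfaces of the foliation — so this is not a negligible set you can step around. One has to take vanishing-size steps near such points and budget the cumulative loss, or split the crossing into two moves matched against the two extremal rays, and this requires a quantitative estimate that is the actual heart of Novikov's argument; your proposal flags the need for it but does not supply it. As it stands, the attempt is a plausible strategy sketch rather than a proof.
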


In the future, we will construct diagonally concave functions on $\Oe$ that satisfy the assumptions of Theorem~\ref{MishasTh}, thus we obtain minimal functions. According to Theorem~\ref{th170901}, these functions will be the Bellman functions~\eqref{170503} for the original extremal problems.

For simplicity, we assume that the boundary functions $f_\pm$ are smooth, although for most of the presented constructions, $C^3$-smoothness is sufficient. To use Theorem~\ref{MishasTh}, we also assume that the functions $f_\pm$ satisfy the estimate $|f_\pm(x_1)|\leq C e^{|x_1|/\tilde\eps}$ for some $\tilde\eps>\eps$.

\section{Simplest Foliation}
\label{Sec2}

Since we are looking for the minimal possible diagonally concave function, its concavity must be degenerate in each point in at least one direction: either along the line segment $x_2-x_1=\text{const}$ or along the line segment $x_2+x_1=\text{const}$. We call such line segments {\it extremal};
 in the first case, we call them right extremal segments, and in the second case, left extremal segments.

In this section, we consider the simplest case when a subregion of $\Oe$ is foliated only by one type of extremal segments: either only right ones or only left ones.

\begin{Def}
The subregion of $\Oe$ foliated by right extremal segments $x_1-x_2=u$, where $u_1\le u\le u_2$, is denoted by $\Om{R}(u_1,u_2)$. Similarly, the subregion of $\Oe$ foliated by left extremal segments $x_1+x_2=u$, where $u_1\le u\le u_2$, is denoted by $\Om{L}(u_1,u_2)$.
\end{Def}

The cases of right and left extremal segments are symmetric. Suppose that $B$ is linear along right extremal segments which foliate $\Om{R}(u_1,u_2)$. This means that for any $u$, $u_1\le u\le u_2$, the boundary points $(u-\eps,-\eps)$ and $(u+\eps,\eps)$ are connected by an extremal segment along which $B$ is linear. In other words,
$$
B(u+s,s)=\frac{\eps+s}{2\eps}f_+(u+\eps)+\frac{\eps-s}{2\eps}f_-(u-\eps)\,,\quad|s|\le\eps\,,
$$
or equivalently,
\eq{180501}{
B(x_1,x_2)=\frac{\eps+x_2}{2\eps}f_+(x_1-x_2+\eps)+\frac{\eps-x_2}{2\eps}f_-(x_1-x_2-\eps)\,.
}

By construction, $B$ is linear (and therefore concave) along the direction $x_2-x_1=\text{const}$, and we need to check its concavity along the direction $x_2+x_1=\text{const}$. For a fixed point $(x_1,x_2)$ define
\begin{align*}
h(s)&=B(x_1-s,x_2+s)
\\
&=\frac{\eps+x_2+s}{2\eps}f_+(x_1-x_2-2s+\eps)+\frac{\eps-x_2-s}{2\eps}f_-(x_1-x_2-2s-\eps)
\end{align*}
and check when this function is concave at $s=0$. Compute the derivatives:
\begin{align*}
h'(s)&=\frac1{2\eps}(f_+-f_-)-\frac{\eps+x_2+s}\eps f_+'-\frac{\eps-x_2-s}\eps f_-'\,,
\\
h''(s)&=-\frac2\eps(f_+'-f_-')+2\frac{\eps+x_2+s}\eps f_+''+2\frac{\eps-x_2-s}\eps f_-''\,.
\end{align*}
Here the values of the function $f_+$ and its derivatives are evaluated at $x_1-x_2-2s+\eps$, and the values of $f_-$ and its derivatives are evaluated at  $x_1-x_2-2s-\eps$.

Fix an extremal segment passing through the point $(u,0)$, that is, the line $x_1-x_2=u$. Concavity of $B$ at all points of this extremal segment in the orthogonal direction implies that the inequality
$$
\half h''(0)=-\frac1\eps(f_+'(u+\eps)-f_-'(u-\eps))
+\frac{\eps+x_2}\eps f_+''(u+\eps)+\frac{\eps-x_2}\eps f_-''(u-\eps)\le0\,
$$
holds for all $x_2 \in [-\eps,\eps]$. This expression is linear in $x_2$, so it is non-negative for all $x_2\in [-\eps,\eps]$ if and only if it is non-negative at $x_2=\pm\eps$. Thus, we obtain the conditions
\eq{190501}{
\begin{aligned}
f_+'(u+\eps)-f_-'(u-\eps)-2\eps f_+''(u+\eps)&\ge0\,;
\\
f_+'(u+\eps)-f_-'(u-\eps)-2\eps f_-''(u-\eps)&\ge0\,.
\end{aligned}
}

We proved the following statement.
\begin{Prop}
\label{200505}
The function $B$ defined by~\eqref{180501} is a Bellman candidate in the region $\Om{R}(u_1,u_2)$ if and only if conditions~\eqref{190501} are satisfied for all $u,$ $u\in(u_1,u_2)$.
\end{Prop}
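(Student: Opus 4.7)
The plan is to observe that conditions (2) and (3) in the definition of a Bellman candidate, together with concavity in the direction $x_1-x_2=\text{const}$, hold automatically for $B$ as defined in~\eqref{180501}, so the entire content of the proposition reduces to concavity in the direction $x_1+x_2=\text{const}$. Indeed, formula~\eqref{180501} is by construction an affine interpolation along each right extremal segment $\{x_1-x_2=u\}$ joining $(u-\eps,-\eps)$ to $(u+\eps,\eps)$. Hence $B$ is linear (so in particular concave) along these segments, which simultaneously gives condition~(3) (every point in $\Om{R}(u_1,u_2)$ lies on such a segment) and the boundary values $B(x_1,\pm\eps)=f_\pm(x_1)$ demanded in~(2).

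So the remaining task is to characterize when $B$ is concave along the orthogonal family $x_1+x_2=\text{const}$. I would take $h(s)=B(x_1-s,x_2+s)$ and ask when $h''(0)\le 0$ at every interior point $(x_1,x_2)$ of $\Om{R}(u_1,u_2)$. The author has already written out $h'$ and $h''$, so this step is a direct calculation using~\eqref{180501} and the chain rule; it is purely mechanical.

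The key observation, and the only real idea in the proof, is the following: along the single right extremal segment passing through $(u,0)$ — that is, along the line $x_1-x_2=u$ — the arguments $u+\eps$ and $u-\eps$ of $f_+$, $f_-$ and their derivatives are frozen, and the expression $\tfrac12 h''(0)$ depends on the point only through the coefficients $(\eps\pm x_2)/\eps$. Thus $\tfrac12 h''(0)$ is an affine function of $x_2$ on $[-\eps,\eps]$. Consequently $\tfrac12 h''(0)\le 0$ for all $x_2\in[-\eps,\eps]$ if and only if it holds at the two endpoints $x_2=\pm\eps$; evaluating at $x_2=\eps$ and $x_2=-\eps$ and clearing the factor $-1/\eps$ yields exactly the two inequalities in~\eqref{190501}. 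Since this affine reduction is an equivalence, both the necessity and the sufficiency of~\eqref{190501} follow simultaneously.

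I do not expect any real obstacle; the only thing worth being careful about is the sign bookkeeping in passing from $\tfrac12 h''(0)\le 0$ to the inequalities written in~\eqref{190501} (the multiplication by the negative factor $-1/\eps$ flips the sign), and the fact that quantifying over all interior $(x_1,x_2)\in\Om{R}(u_1,u_2)$ is the same as quantifying over $u\in(u_1,u_2)$ and $x_2\in[-\eps,\eps]$, which is precisely what the affine-in-$x_2$ argument exploits.
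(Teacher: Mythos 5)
Your proof is correct and follows essentially the same route as the paper: linearity along the right extremal segments is built into \eqref{180501} (which also gives the boundary values and condition (3) of the Bellman-candidate definition), and the remaining concavity check along $x_1+x_2=\text{const}$ reduces, via the observation that $\tfrac12 h''(0)$ is affine in $x_2$ with $u=x_1-x_2$ frozen on each extremal segment, to the two endpoint inequalities \eqref{190501}. You are also right to flag the sign flip coming from the negative factor $-1/\eps$ when passing from $\tfrac12 h''(0)\le 0$ to the inequalities in \eqref{190501}; the paper's phrasing (``non-negative'') is a slight slip in that spot, and your bookkeeping is the correct one.
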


In the symmetric case of left extremal segments, the Bellman candidate is given by the formula
\eq{200501}{
B(x_1,x_2)=\frac{\eps+x_2}{2\eps}f_+(x_1+x_2-\eps)+\frac{\eps-x_2}{2\eps}f_-(x_1+x_2+\eps)\,.
}
Instead of conditions~\eqref{190501}, we obtain the following inequalities:
\eq{200502}{
\begin{aligned}
f_-'(u+\eps)-f_+'(u-\eps)-2\eps f_+''(u-\eps)&\ge0\,;
\\
f_-'(u+\eps)-f_+'(u-\eps)-2\eps f_-''(u+\eps)&\ge0\,.
\end{aligned}
}
We formulate a symmetric statement characterizing a Bellman candidate in this case.
\begin{Prop}
\label{200506}
The function $B$ defined by~\eqref{200501} is a Bellman candidate in the region $\Om{L}(u_1,u_2)$ if and only if conditions~\eqref{200502} are satisfied for all $u,$ $u\in(u_1,u_2)$.
\end{Prop}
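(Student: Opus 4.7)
The plan is to deduce Proposition~\ref{200506} from Proposition~\ref{200505} by the obvious reflection symmetry of the setup, with a direct computation available as an alternative. The reflection $\sigma(x_1,x_2)=(x_1,-x_2)$ preserves the strip $\Oe$, preserves diagonal concavity (it only exchanges the two diagonal directions $x_1\pm x_2=\const$), interchanges left and right extremal segments, and swaps the upper and lower boundary components. Hence if $B$ is defined on $\Om{L}(u_1,u_2)$ by~\eqref{200501} with data $f_\pm$, its reflection $\widetilde B(x_1,x_2)\df B(x_1,-x_2)$ coincides on $\Om{R}(u_1,u_2)$ with the function given by formula~\eqref{180501} applied to the swapped data $\widetilde f_\pm\df f_\mp$. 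Being a Bellman candidate is invariant under $\sigma$, so $B$ is a Bellman candidate on $\Om{L}(u_1,u_2)$ iff $\widetilde B$ is one on $\Om{R}(u_1,u_2)$; Proposition~\ref{200505} then characterizes this by~\eqref{190501} applied to $\widetilde f_\pm$, and rewriting in terms of $f_\pm$ (i.e. replacing $\widetilde f_+\to f_-$, $\widetilde f_-\to f_+$) gives exactly the two inequalities in~\eqref{200502}.

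For a direct proof I would repeat the argument verbatim, with $(1,1)$ playing the role that $(-1,1)$ had before. By construction $B$ is linear, hence concave, along the left extremal segments $x_1+x_2=\const$, so only concavity in the transverse direction $(1,1)$ needs checking. Fix $(x_1,x_2)$ and set
$$
h(s)=B(x_1+s,x_2+s)=\frac{\eps+x_2+s}{2\eps}f_+(x_1+x_2+2s-\eps)+\frac{\eps-x_2-s}{2\eps}f_-(x_1+x_2+2s+\eps).
$$
A straightforward differentiation at $s=0$, with $f_+$ and its derivatives evaluated at $x_1+x_2-\eps$ and $f_-$ and its derivatives at $x_1+x_2+\eps$, yields an expression for $\tfrac12 h''(0)$ that is linear in $x_2$.

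Fixing the extremal segment $x_1+x_2=u$ (so that the evaluation points are $u-\eps$ and $u+\eps$), the requirement $h''(0)\le 0$ becomes an inequality whose left-hand side is affine in $x_2\in[-\eps,\eps]$; it holds throughout this interval iff it holds at the two endpoints $x_2=\pm\eps$, and substituting these produces precisely the inequalities~\eqref{200502}. The only mild pitfall is sign/bookkeeping: one must be careful that the reflection correctly swaps $f_+\leftrightarrow f_-$ and matches the evaluation points $u\pm\eps$ with the corresponding endpoints of a left extremal segment, and, in the direct calculation, that the chain rule factor of $2$ from $f_\pm(x_1+x_2+2s\mp\eps)$ is tracked. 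No new ideas beyond those of Proposition~\ref{200505} are needed.
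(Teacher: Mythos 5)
Your proposal is correct and matches the paper's approach: the paper dispatches this proposition as the mirror image of Proposition~\ref{200505} under the reflection $(x_1,x_2)\mapsto(x_1,-x_2)$ (which swaps $f_+\leftrightarrow f_-$ and left/right extremal segments), exactly as you argue, and your alternative direct computation of $h''(0)$ reproduces the calculation preceding Proposition~\ref{200505} with the appropriate sign changes. Both your symmetry bookkeeping and the endpoint evaluation at $x_2=\pm\eps$ land correctly on the two inequalities in~\eqref{200502}.
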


As $\eps$ approaches $0$, both inequalities in condition~\eqref{190501} take the same limiting form
\eq{200503}{
f_+'(u)\ge f_-'(u).
}
The limiting form of condition~\eqref{200502} (as $\eps$ approaches $0$) is the opposite inequality
\eq{200504}{
f_+'(u)\le f_-'(u).
}

This leads to the following statement.

\begin{Prop}
Let $u \in \mathbb{R}$. If $f_+'(u) > f_-'(u),$ then for sufficiently small~$\eps$ there exists a $\delta,$ $\delta = \delta(\eps),$ such that the function $B$ defined by~\eqref{180501} is a Bellman candidate in $\Om{R}(u-\delta,u+\delta)$. If~$f_+'(u) < f_-'(u),$ then for sufficiently small~$\eps$ there exists a $\delta,$ $\delta = \delta(\eps),$ such that the function $B$ defined by~\eqref{200501} is a Bellman candidate in $\Om{L}(u-\delta,u+\delta)$.
\end{Prop}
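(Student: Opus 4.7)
The plan is to reduce the claim directly to Propositions~\ref{200505} and~\ref{200506} and then verify inequalities \eqref{190501} (respectively \eqref{200502}) by a straightforward continuity/compactness argument, exploiting that the $O(\eps)$ ``error'' terms involving second derivatives are dominated by the fixed strictly positive gap between $f_+'(u)$ and $f_-'(u)$.

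Concretely, for the first case assume $c := f_+'(u) - f_-'(u) > 0$. By the assumed smoothness of $f_\pm$, the functions $f_\pm'$ are continuous and $f_\pm''$ is bounded on a bounded neighborhood of $u$. So I would fix some $\delta_0 > 0$ and $M < \infty$ such that $|f_\pm''(t)| \le M$ for all $t$ with $|t - u| \le 2\delta_0$, and then use uniform continuity of $f_+' - f_-'$ on this neighborhood to pick $\delta_1 \in (0,\delta_0)$ with
\[
\bigl| f_+'(v+\eps) - f_-'(v-\eps) - c \bigr| < c/2
\]
whenever $|v - u| \le \delta_1$ and $\eps \le \delta_1$. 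Finally, take $\eps_0 = \min\{\delta_1, c/(8M)\}$ and, for each $\eps \in (0,\eps_0)$, set $\delta = \delta(\eps) := \delta_1$. Then for every $v \in (u - \delta, u + \delta)$ both expressions appearing in \eqref{190501} are bounded below by $c/2 - 2\eps M \ge c/4 > 0$, so Proposition~\ref{200505} applies and $B$ given by \eqref{180501} is a Bellman candidate on $\Om{R}(u-\delta,u+\delta)$.

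The case $f_+'(u) < f_-'(u)$ is symmetric: the same argument with $c := f_-'(u) - f_+'(u) > 0$ shows that both inequalities in \eqref{200502} are satisfied on a small interval once $\eps$ is small, and Proposition~\ref{200506} delivers the candidate \eqref{200501}. There is no genuine obstacle here; the only thing to keep track of is that the size of $\delta$ may depend on $\eps$ (through the modulus of continuity of $f_\pm'$), which is why the statement records the dependence $\delta = \delta(\eps)$.
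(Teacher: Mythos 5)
Your argument is correct and follows essentially the same route as the paper: reduce to Propositions~\ref{200505} and~\ref{200506}, and use smoothness of $f_\pm$ to verify that conditions~\eqref{190501} (respectively~\eqref{200502}) hold on a small interval around $u$ once $\eps$ is small. The paper only states this in one sentence (getting a neighborhood of size $O(\eps)$), whereas you spell out the bound $c/2 - 2\eps M \ge c/4$ and obtain a $\delta$ that can be taken independent of $\eps$; this extra explicitness is fine and in fact slightly stronger, but the underlying idea is identical.
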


\begin{proof}
Since the functions $f_\pm$ are sufficiently smooth, the condition $f_+'(u) > f_-'(u)$ implies that conditions~\eqref{190501} hold for sufficiently small $\eps$ not only for a given $u$ but also in a neighborhood of size $O(\eps)$. Therefore, the statement follows from Proposition~\ref{200505}. In the case of $f_+'(u) < f_-'(u),$ the conclusion follows from Proposition~\ref{200506}.
\end{proof}

\begin{Prop}
\label{061101}
Let the functions $f_\pm'''$ be uniformly bounded on the entire real line. If $f_+'-f_-'$ is uniformly separated from zero\textup, then for sufficiently small~$\eps$ the function $\Bell$ has a simple right foliation $\Om{R}(-\infty, +\infty)$ in the case of $f_+'-f_-'>0$ and a simple left foliation $\Om{L}(-\infty, +\infty)$ in the case of $f_+'-f_-'<0$.
\end{Prop}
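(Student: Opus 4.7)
The plan is to derive the proposition directly from Proposition~\ref{200505} (or its left-handed twin Proposition~\ref{200506}) by verifying its hypothesis uniformly on all of $\mathbb{R}$, and then to upgrade the resulting Bellman candidate to $\Bell$ itself via Theorem~\ref{MishasTh}. By the evident right/left symmetry we treat only the case $f_+'-f_-'\ge c>0$ and claim that the function $B$ defined by formula~\eqref{180501} coincides with $\Bell$ on $\Oe=\Om{R}(-\infty,+\infty)$ for all sufficiently small $\eps$.

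The heart of the argument is a Taylor identity that absorbs the second-derivative term in~\eqref{190501}. Taylor's theorem with remainder gives
\[
f_+'(u-\eps)=f_+'(u+\eps)-2\eps f_+''(u+\eps)+2\eps^2 f_+'''(\xi),
\]
\[
f_-'(u+\eps)=f_-'(u-\eps)+2\eps f_-''(u-\eps)+2\eps^2 f_-'''(\eta),
\]
for suitable intermediate points $\xi,\eta$. Substituting these into the two inequalities of~\eqref{190501} rewrites them as
\[
[f_+'-f_-'](u-\eps)\ge 2\eps^2 f_+'''(\xi)\quad\text{and}\quad [f_+'-f_-'](u+\eps)\ge -2\eps^2 f_-'''(\eta),
\]
both of which hold for every $u\in\mathbb{R}$ as soon as $2\eps^2\max(\|f_+'''\|_\infty,\|f_-'''\|_\infty)\le c$. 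Proposition~\ref{200505} then asserts that $B$ is a Bellman candidate on $\Om{R}(-\infty,+\infty)$ for all such $\eps$.

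To identify this candidate with $\Bell$ we would apply Theorem~\ref{MishasTh}. The function $B$ is smooth, so it is upper semi-continuous with empty discontinuity set; formula~\eqref{180501} combined with the exponential bound on $f_\pm$ supplies the growth hypothesis; and at every interior point $x\in\Oe$ the right extremal segment through $x$ parallel to $(1,1)$ hits $\partial\Oe$ at $(x_1-x_2\pm\eps,\pm\eps)$, $B$ is linear along it by construction, and smoothness of the foliation makes $B$ differentiable in the direction $(1,1)$. This is condition~(1) of Theorem~\ref{MishasTh}, whence $B$ is the pointwise minimal diagonally concave function with the prescribed boundary values, and Theorem~\ref{th170901} then gives $B=\Bell$.

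The only step that deserves to be called technical is the Taylor identity, and it is short and explicit. The main thing one must keep in mind is uniformity in $u$, which the two hypotheses---the global bound on $\|f_\pm'''\|_\infty$ and the uniform separation of $f_+'-f_-'$ from zero---are tailor-made to supply; no genuine obstacle is expected.
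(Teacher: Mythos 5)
Your proof is correct and follows essentially the same route as the paper: the paper's argument is precisely the Taylor-remainder rewriting of conditions~\eqref{190501} that you carry out (yielding $[f_+'-f_-'](u\mp\eps)\ge\pm2\eps^2 f_\pm'''(\cdot)$), after which Proposition~\ref{200505} gives the candidate. You additionally spell out the final identification of the candidate with $\Bell$ via Theorem~\ref{MishasTh}, which the paper leaves implicit as part of the standing framework.
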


\begin{proof}
If $f_+'-f_-'>0,$ then the conclusion follows directly if we rewrite conditions~\eqref{190501} using Taylor's formula:
$$
\begin{aligned}
f_+'(u-\eps)-f_-'(u-\eps)-2\eps^2 f_+'''(v)&\ge0\quad \text{for some } v \in [u-\eps,u+\eps]\,;
\\
f_+'(u+\eps)-f_-'(u+\eps)+2\eps^2 f_-'''(v)&\ge0\quad \text{for some } v \in [u-\eps,u+\eps]\,.
\end{aligned}
$$
The case $f_+'-f_-'<0$ is similar.
\end{proof}

\section{Examples. Linear and Quadratic Functions}
\label{230201}

Let $f_\pm(t)=a_1^\pm t+a_0^\pm$. The functions $f_\pm'$ are identically equal to the constants $a_1^\pm$, so both expressions on the left side of~\eqref{190501} coincide with $a_1^+-a_1^-$. If $a_1^+ > a_1^-$, then the Bellman function is given by~\eqref{180501} and has a simple foliation $\Om{R}(-\infty, +\infty)$ for all $\eps$. Similarly, if $a_1^+ < a_1^-$, then both expressions on the left side of~\eqref{200502} coincide with $a_1^--a_1^+$, so the Bellman function is given by~\eqref{200501} and has a simple foliation $\Om{L}(-\infty, +\infty)$. If $a_1^+=a_1^-\equiv a_1$, then the Bellman function is~linear:
$$
\Bell(x_1,x_2)=a_1x_1+\frac{a_0^+-a_0^-}{2\eps}x_2+\frac{a_0^++a_0^-}2\,.
$$

Now let $f_\pm(t)=a_2^\pm t^2+a_1^\pm t+a_0^\pm$. Suppose $a_2^-<a_2^+$ and define $u_0=\frac{a_1^--a_1^+}{2(a_2^+-a_2^-)}$. Then condition~\eqref{190501} is satisfied for $u\ge u_0+\eps$, and condition~\eqref{200502} is satisfied for $u\le u_0-\eps$. Thus, we have two regions $\Om{R}(u_0+\eps,+\infty)$ and $\Om{L}(-\infty,u_0-\eps)$ with simple foliations, and the remaining triangle with vertices $(u_0-2\eps,\eps)$, $(u_0,-\eps)$, $(u_0+2\eps,\eps)$. In this triangle, the Bellman function is linear in both diagonal directions:
$$
\Bell(x_1,x_2)=a_2^+(x_1^2-x_2^2+\eps^2)+a_1^+x_1+
\Big(\frac{a_0^+-a_0^-}{2\eps}-\frac{(a_1^+-a_1^-)^2}{8\eps(a_2^+-a_2^-)}\Big)(x_2-\eps)+a_0^+\,.
$$

If $a_2^->a_2^+$, then the foliation is symmetric to the one described above with respect to the $x_1$-axis (we need to swap $f_+$ and $f_-$). So, we are only left to consider the case $a_2^+=a_2^-$. If~$a_1^+\geq a_1^-$, then the relations~\eqref{190501} hold for all $\eps$, and there is a foliation $\Om{R}(-\infty, +\infty)$. Similarly, if $a_1^+\leq a_1^-$, then the relations~\eqref{200502} hold, and there is a foliation $\Om{L}(-\infty, +\infty)$. In particular, if $a_1^+=a_1^-$, then the function is linear along both diagonal directions:
$$
\Bell(x_1,x_2)=a_2(x_1^2-x_2^2+\eps^2)+a_1x_1+\frac{a_0^+-a_0^-}{2\eps}x_2+\frac{a_0^++a_0^-}2\,.
$$

\section{Horizontal Herringbones}
\label{Sec4}

Now we consider foliations that can arise in a neighborhood of a point where 
\eq{210501}{
f_+'(u)=f_-'(u).
}

If two families of extremal segments of distinct direction meet at points lying on some curve, we call such a foliation a \textit{herringbone}, considering this curve as the \textit{spine} of the herringbone and the extremal segments expanding from the points of the spine as its \textit{ribs}. If the ribs go from the spine to the same boundary, we call such a herringbone \textit{vertical} (see Fig.~\ref{fig:vf}), and if they go to opposite boundaries of the strip, we call it \textit{horizontal}.
A vertical herringbone can extend upwards or downwards. 
\begin{figure}[h]
\includegraphics[width=0.45\textwidth]{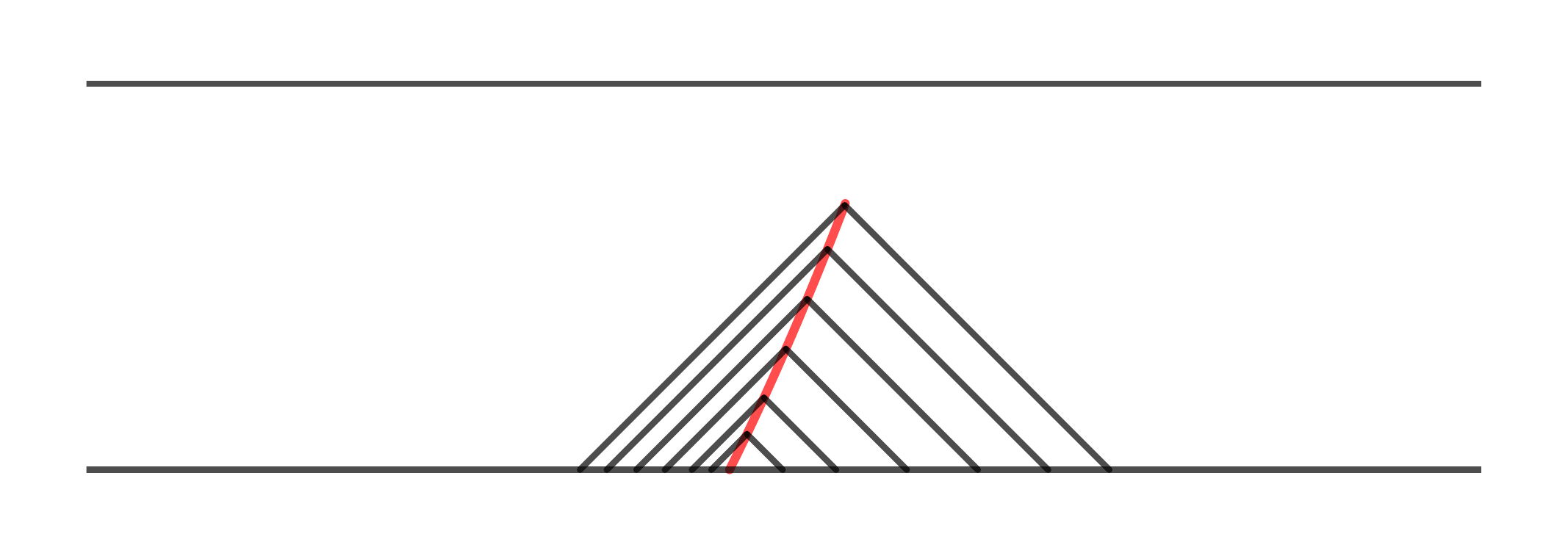}
   \includegraphics[angle=180,origin=c, width=0.45\textwidth]{Vertical_fir.pdf}
   \caption{Vertical herringbones extending from bottom to top and from top to bottom}
    \label{fig:vf}
\end{figure}
A~horizontal herringbone can extend from left to right or from right to left, and depending on the direction of extending, we call it \textit{left} or \textit{right}, respectively (see~Fig.~\ref{fig:hf}).
 \begin{figure}[h]
   \includegraphics[width=0.45\textwidth]{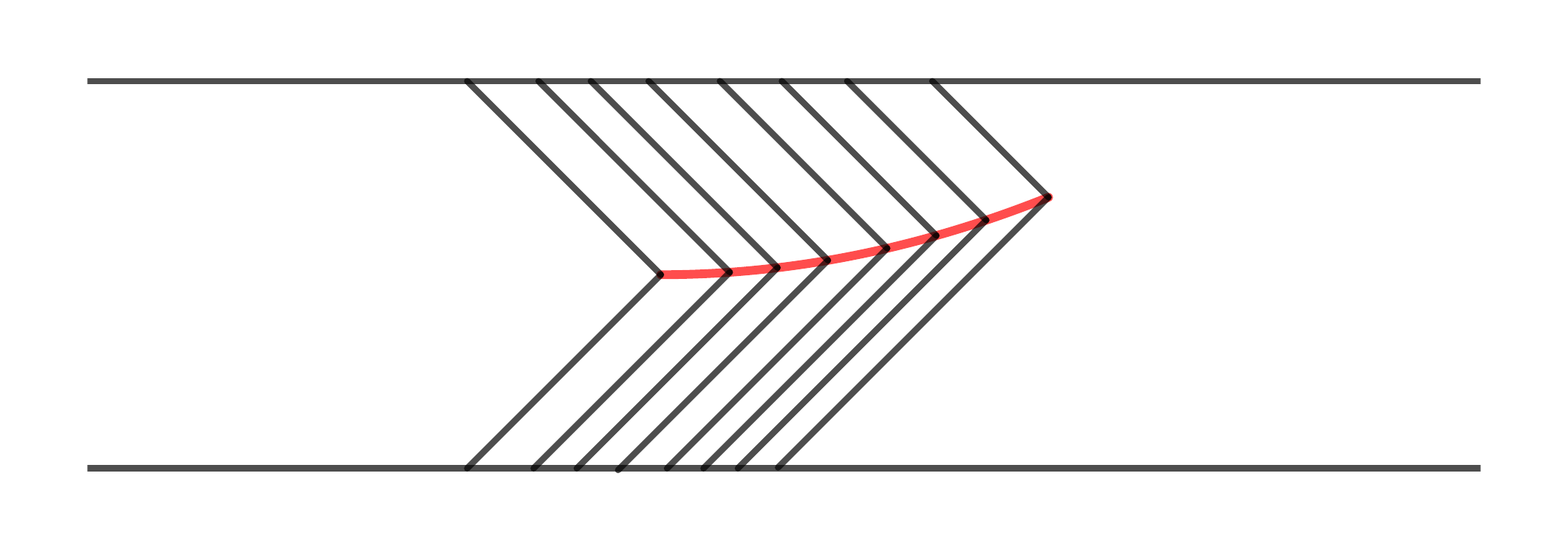}
   \includegraphics[angle=180,origin=c, width=0.45\textwidth] {Horizontal_fir.pdf}
   \caption{Horizontal herringbones, left and right}
   \label{fig:hf}
\end{figure}
A~horizontal herringbone whose spine connects two opposite boundaries of the strip is called a~\textit{fissure}. A fissure can have one of four directions. If the right fissure starts 
from the bottom boundary, we call it \textit{southeast} (SE-fissure); if it starts 
from the top boundary, we call it \textit{northeast} (NE-fissure). If the left fissure 
starts from the bottom boundary, we call it \textit{southwest} (SW-fissure); if it 
starts from the top boundary, we call it 
\textit{northwest} (NW-fissure), see Fig.~\ref{fig:fis}. Note that in all cases, the name of the fissure indicates where it extends from.
\begin{figure}[h]
   \includegraphics[angle=180,origin=c, width=0.23\textwidth]{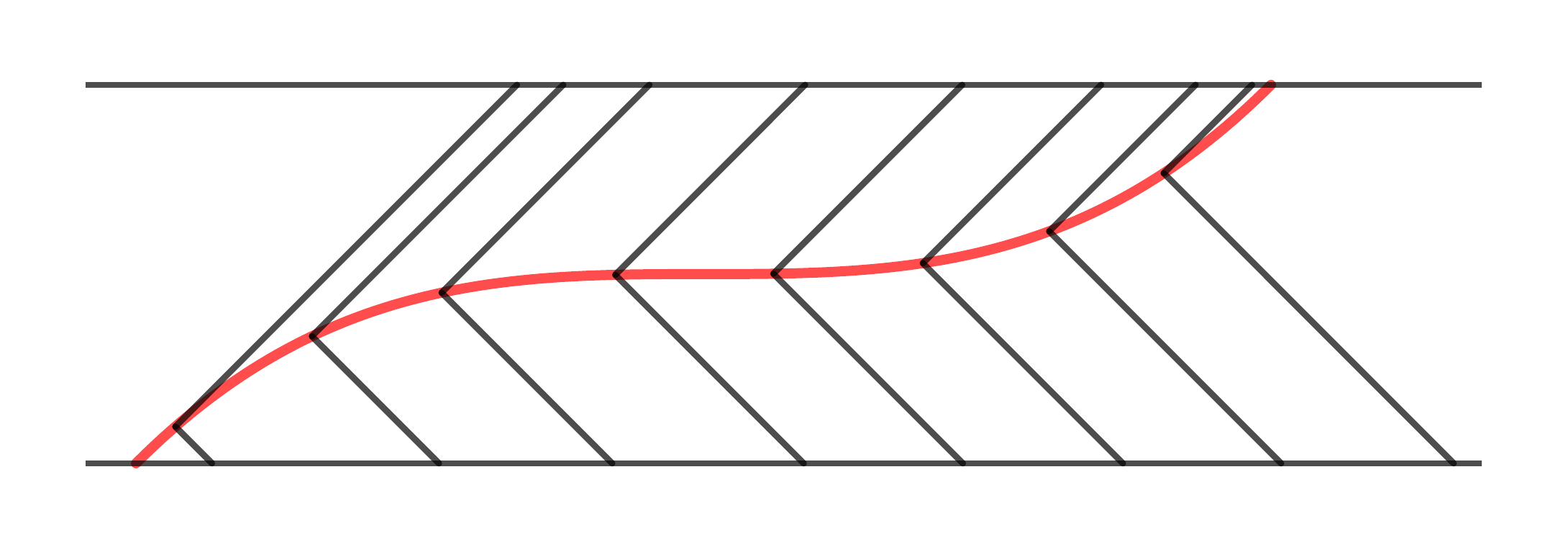}
   \includegraphics[angle=180,origin=c, width=0.23\textwidth]{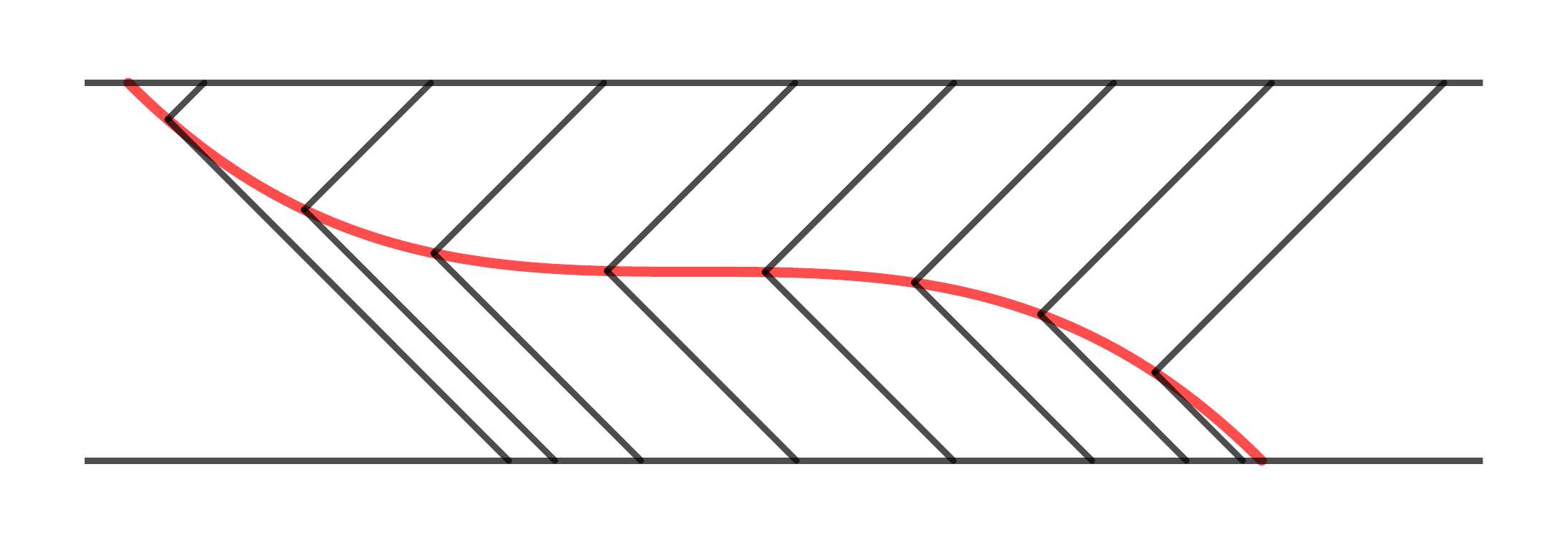}
  \includegraphics[ width=0.23\textwidth]{SW_fissure.pdf}
   \includegraphics[ width=0.23\textwidth]{NW_fissure.pdf}
   \caption{Fissures: SW, NW, NE, SE}
   \label{fig:fis}
\end{figure}

In section~\ref{Fissures}, we will consider different fissures as boundaries between subdomains of left and right foliations. Here we study local behavior of horizontal herringbones. We will only consider the case of a left herringbone (extends from left to right), and for the right one, everything will be symmetric.

Assume that the spine of a left herringbone is the graph of a $C^1$ function $T$ on some interval, say $[u_1,u_2]$, and the foliation is defined as follows. From each point $(u,T(u))$, $u\in[u_1,u_2]$, i.\,e., a~point on the spine, two ribs start: one goes to the point $(u+T(u)-\eps,\eps)$ on the upper boundary, and the other goes to the point $(u-T(u)-\eps,-\eps)$ on the lower boundary, see the left part of Fig.~\ref{fig:hf}. The parameter $u$ itself can be considered as a function of the variable point $x$ lying in the domain foliated by the ribs. The ribs going to the lower boundary foliate the domain between the extremal segments $x_2=x_1-u_1+T(u_1)$ and $x_2=x_1-u_2+T(u_2)$ below the spine of the herringbone. We call them lower ribs. Here the function $x\mapsto u(x)$ is implicitly defined by the identity $x_1-x_2=u-T(u)$, and we have $x_2\leq T(u)$. Similarly, the extremal segments going from the spine to the upper boundary are called upper ribs. Here the identity $x_1+x_2=u+T(u)$ holds, and we have $x_2\geq T(u)$. The condition $|T'|\le 1$ is necessary for such a foliation. We assume the strict inequality $|T'|<1$, except possibly at a finite number of points on the spine.

If we know the values of $B$ on the spine {\bf(}denote it by $A$: $A\!=\!A(u)\! \df\! B(u, T(u))${\bf)}, then we can calculate its values on the ribs using linearity. Before writing the formula for $B$, let us agree on the following simplification of notations: in the future, we will omit the arguments of functions, assuming that $A$, $T$, and their derivatives are always calculated at $u$, the values of $f_-$ and its derivatives are calculated at $u-T-\eps=x_1-x_2-\eps$, and the values of $f_+$ and its derivatives are calculated at $u+T-\eps=x_1+x_2-\eps$. In all special cases where the arguments do not satisfy this rule, they will be explicitly written.

Let us write a formula for a Bellman candidate on the left herringbone:
\eq{220501}{
B(x)=
\begin{cases}
\displaystyle
\frac{\eps-x_2}{\eps-T}A+\frac{x_2-T}{\eps-T}f_+,
& x_2 \geq T(u);
\\
\displaystyle
\frac{\eps+x_2}{\eps+T}A+\frac{T-x_2}{\eps+T}f_-, \rule{0pt}{25pt}
& x_2 \leq T(u).
\end{cases}
}

The functions $A$ and $T$ must be completely determined by the boundary conditions $f_\pm$, but as a first step, we derive an important relationship between the functions $A$ and $T$.

\begin{Prop}
\label{050608}
If the Bellman candidate $B$ is $C^1$\!-smooth  and the corresponding foliation is a~left herringbone described above\textup, then
\eq{220502}{
2A'=(1-T')\frac{A-f_+}{\eps-T}+(1+T')\frac{A-f_-}{\eps+T}\,,
}
where $f_+=f_+(u+T(u)-\eps)$ and $f_-=f_-(u-T(u)-\eps)$.

In the case of a right herringbone\textup, we have\textup:
\eq{220503}{
2A'=(1-T')\frac{f_--A}{\eps+T}+(1+T')\frac{f_+-A}{\eps-T}\,,
}
where $f_+=f_+(u-T(u)+\eps)$ and $f_-=f_-(u+T(u)+\eps)$.
\end{Prop}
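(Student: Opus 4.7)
The strategy is to translate the hypothesis that $B\in C^1$ across the spine $x_2=T(u)$ into a pointwise relation among $A, A', T, T', f_\pm$, by computing the gradient of $B$ from each side and equating the results. From formula~\eqref{220501}, along each upper rib---which joins the spine point $(u,T(u))$ (where $B=A$) to $(u+T-\eps,\eps)$ (where $B=f_+$) and points in the direction $(1,-1)$---the restriction of $B$ is affine, hence
\[
\partial_{(1,-1)} B = \frac{A - f_+}{\eps - T} \quad\text{on the upper side.}
\]
A symmetric computation on each lower rib (direction $(1,1)$, endpoint $(u-T-\eps,-\eps)$ with value $f_-$) gives $\partial_{(1,1)} B = (A-f_-)/(\eps+T)$ on the lower side. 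A third piece of information, valid on both sides, comes from differentiating the identity $B(u,T(u))=A(u)$ in $u$, which yields $\nabla B\cdot(1,T')=A'$.

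Under the standing assumption $|T'|<1$, the pairs $\{(1,T'),(1,-1)\}$ and $\{(1,T'),(1,1)\}$ are each linearly independent, so the two scalar equations available on each side of the spine determine $\nabla B$ there completely. Writing $\nabla B=(p,q)$ and solving the two resulting $2\times 2$ systems, the second components from the upper and lower sides are
\[
q^{+} = \frac{A' - (A-f_+)/(\eps-T)}{1+T'}, \qquad q^{-} = \frac{(A-f_-)/(\eps+T) - A'}{1-T'}.
\]
The $C^1$ hypothesis forces the two one-sided gradients to coincide on the spine; their tangential components already match, so it suffices to impose $q^{+}=q^{-}$. Cross-multiplying and isolating $A'$ then produces exactly identity~\eqref{220502}.

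For the right herringbone, the roles of the rib directions are swapped: the upper rib now points in direction $(1,1)$ and ends at $(u-T+\eps,\eps)$, while the lower rib points in direction $(1,-1)$ and ends at $(u+T+\eps,-\eps)$. Because the signed parameter from the spine to the endpoint now runs in the positive rib direction, the numerators flip from $A-f_\pm$ to $f_\pm-A$, and an otherwise identical computation delivers~\eqref{220503}. The main obstacle is purely bookkeeping---tracking the rib directions, the signs of the parameters along them, and the correct arguments of $f_\pm$ (as per the convention announced just before the proposition). Once these are fixed consistently, the derivation reduces on each side to a routine $2\times 2$ linear solve.
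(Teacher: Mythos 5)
Your argument is correct and is essentially the same as the paper's: the paper's determinant condition, obtained from the tangent plane through $(u,T,A)$, $(u\pm T\mp\eps,\pm\eps,f_\pm)$ containing $(1,T',A')$, encodes exactly the one-sided gradient matching that you carry out by solving the two $2\times2$ systems. Your version unpacks the determinant into an explicit linear solve, but the hypotheses used (linearity along ribs, linearity of $A$ along the spine, $C^1$-matching across the spine) and the resulting identity are identical.
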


\begin{proof}
We consider only the case of the left herringbone. The symmetric case can be considered absolutely similarly; however,~\eqref{220503} can be formally obtained from~\eqref{220502} by the symmetry. To do this, we need to change the direction of the $x_1$-axis, i.\,e., change the sign of all first derivatives.

Consider the plane $P$ containing the following three points:
$$
\begin{aligned}
z&=(u,T,A),
\\
z_-&=(u-T-\eps,-\eps,f_-),
\\
z_+&=(u+T-\eps,\eps,f_+).
\end{aligned}
$$ 
The segments $[z,z_\pm]$ belong to the graph of the function $B$ because $B$ is linear in the corresponding directions. Since $B$ is diagonally concave and $C^1$\!-smooth, the plane $P$ is tangent to the graph of $B$ at the point $z$. Therefore, the vector $(1,T',A')$ that is tangent to the graph of $B$ along the spine of the herringbone is parallel to the plane $P$. Hence,
$$
\det
\begin{pmatrix}
\hskip 10pt 1\hskip 10pt  & T' & A'
\\
\eps+T & \eps+T & A-f_-\rule{0pt}{20pt}
\\
\eps-T & T-\eps &A-f_+\rule{0pt}{20pt}
\end{pmatrix}
=0\,,
$$
which coincides with~\eqref{220502}.
\end{proof}

Let us show that the equation~\eqref{220502} is not only a necessary but also a sufficient condition for the function~\eqref{220501} to be $C^1$\!-smooth. Before proving this, to simplify further calculations we introduce three auxiliary functions of the variable $u$:
\eq{020601}{
R_-\df\frac{A-f_-}{\eps+T},\qquad R_+\df\frac{A-f_+}{\eps-T},\quad\text{and}\quad R\df R_-+R_+.
}
We rewrite~\eqref{220502} as follows:
\eq{020602}{
2A'=(1-T')R_++(1+T')R_-,
}
and~\eqref{220501} takes the following form:
\eq{020603}{
B(x)=
\begin{cases}
(\eps-x_2)R_++f_+, 
&\text{if }x_2\ge T(u);
\\
(\eps+x_2)R_-+f_-,
&\text{if }x_2\le T(u).
\end{cases}
}
The meaning of the coefficients $R_\pm$ is evident from this formula: these are (up to the sign) the slopes of $B$ on the corresponding extremal lines.
\begin{Prop}
Under condition~\eqref{220502}\textup, the function $B$ defined by~\eqref{220501} is $C^1$\!-smooth.
\end{Prop}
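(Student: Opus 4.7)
The plan is to reduce $C^1$-smoothness of $B$ to three ingredients: (a) continuity of $B$ across the spine $x_2 = T(u)$; (b) $C^1$-smoothness of each branch of \eqref{220501} on its closed half-region; and (c) equality of the two one-sided gradients on the spine. Only step (c) will use the hypothesis \eqref{220502}; steps (a) and (b) are general properties of the ansatz.

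Step (a) is immediate: setting $x_2 = T(u)$ in either branch of \eqref{220501} produces $A(u)$, so the two pieces agree on the spine.

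For step (b), I would appeal to the implicit function theorem. On the upper region the parameter $u$ is defined by $x_1 + x_2 = u + T(u)$, whose $u$-derivative is $1+T'$, while on the lower region the relation $x_1 - x_2 = u - T(u)$ has $u$-derivative $1-T'$. Both are non-zero by the standing assumption $|T'|<1$, so $u$ is a $C^1$ function of $(x_1, x_2)$ up to the spine on each half-region. Since the denominators $\eps \pm T$ in the definitions \eqref{020601} of $R_\pm$ are non-zero whenever $|T|<\eps$, the functions $R_\pm$ are $C^1$ in $u$, and hence each branch of \eqref{220501} (equivalently \eqref{020603}) is $C^1$ on its closed half-region.

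For step (c), I would reuse the tangent-plane viewpoint from the proof of Proposition~\ref{050608}. At a spine point $z = (u, T(u), A(u))$, the graph of the upper branch contains the rib segment to $z_+ = (u+T-\eps, \eps, f_+)$, whose direction is $(-1, 1, -R_+)$, together with the spine curve with tangent $(1, T', A')$; these two vectors span the upper tangent plane at $z$. Symmetrically, the lower tangent plane at $z$ is spanned by $(-1, -1, -R_-)$ and $(1, T', A')$. Coincidence of the two planes is equivalent to vanishing of the determinant
\[
\det\begin{pmatrix} -1 & 1 & -R_+ \\ -1 & -1 & -R_- \\ 1 & T' & A' \end{pmatrix},
\]
which a routine expansion (say along the last row) will identify as $2A' - (1-T')R_+ - (1+T')R_-$, i.e.\ as zero by \eqref{220502} (equivalently \eqref{020602}). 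Because the spine tangent $(1, T', A')$ has non-zero first coordinate, the common tangent plane is non-vertical, so coincidence of tangent planes is equivalent to the pointwise equality $\nabla B_+(u, T(u)) = \nabla B_-(u, T(u))$. The only mild obstacle is the determinant expansion, but it is essentially the same algebraic identity that drove Proposition~\ref{050608}, so no genuinely new work is required.
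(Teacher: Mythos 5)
Your proposal is correct, and it takes a genuinely different route from the paper. The paper proves $C^1$-smoothness by direct computation: using the auxiliary identities~\eqref{020605}, \eqref{020607}, \eqref{020608}, it writes out $B_{x_1}$ on each half of the herringbone (formula~\eqref{260501}) and checks that both one-sided limits on the spine equal $\tfrac12 R$, and similarly for $B_{x_2}$ via~\eqref{020609}; this has the side benefit that the explicit formulas for $B_{x_1}$ and $B_{x_2}$ are then reused in the next proposition when checking diagonal concavity. You instead reduce the matching of one-sided gradients to coincidence of the two tangent planes at a spine point and observe that the relevant $3\times3$ determinant is, up to row permutation and scaling, exactly the one that appeared in the proof of Proposition~\ref{050608}, so condition~\eqref{220502} is precisely what makes it vanish. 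This cleanly exhibits~\eqref{220502} as both necessary (Proposition~\ref{050608}) and sufficient for $C^1$-smoothness via one and the same algebraic identity, which is conceptually tidy, at the cost of not producing the explicit gradient formulas the paper wants downstream. One small point worth making explicit in your step (c): vanishing of the determinant forces the two planes to coincide only because the pairs $\{(-1,1,-R_+),(1,T',A')\}$ and $\{(-1,-1,-R_-),(1,T',A')\}$ are each linearly independent, which follows from $|T'|<1$; and non-verticality of the common plane likewise uses that the projections $(-1,1)$ and $(1,T')$ to the $(x_1,x_2)$-plane are independent, again by $|T'|<1$. Both facts are immediate, but they are what license the passage from ``same tangent plane'' to ``same gradient.''
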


\begin{proof}
We start with some auxiliary differentiation formulas that will be useful later. First, the relation
\eq{020604}{
\begin{cases}
x_1+x_2=u+T,
&\text{if }x_2\ge T(u);
\\
x_1-x_2=u-T, 
&\text{if }x_2\le T(u),
\end{cases}
}
leads to the identities
\eq{020605}{
u_{x_1}=
\begin{cases}
\frac1{1+T'},
&\text{if }x_2\ge T(u);
\\
\frac1{1-T'},
&\text{if }x_2\le T(u),
\end{cases}
\quad\text{and}\quad
u_{x_2}=
\begin{cases}
\phantom{-}\frac1{1+T'},
&\text{if }x_2\ge T(u);
\\
-\frac1{1-T'},
&\text{if }x_2\le T(u).
\end{cases}
}

Before differentiating $B$, let us calculate the derivatives of $R_+$ and $R_-$ with respect to $u$, using~\eqref{220502} in the equivalent form~\eqref{020602}.
\eq{070501}{
\begin{aligned}
R_+'&=\frac{A'-(1+T')f_+'}{\eps-T}+\frac{A-f_+}{(\eps-T)^2}T'
\\
&=\frac1{\eps-T}\Big(\half(1-T')R_++\half(1+T')R_--(1+T')f_+'+R_+T'\Big)
\\
&=\frac{1+T'}{\eps-T}\big(\half R-f_+'\big).
\end{aligned}
}
Here we took into account that
$$
\frac d{du}f_+ = \frac d{du}f_+\big(u+T(u)-\eps\big)=(1+T')f_+'.
$$

We introduce two more notations:
\eq{020606}{
N_+\df\frac{\half R-f_+'}{\eps-T}\quad\text{and}\quad N_-\df\frac{\half R-f_-'}{\eps+T}\,.
}
Then~\eqref{070501} can be rewritten as
\eq{020607}{
R_+'=(1+T')N_+.
}
Similarly, we  obtain 
\eq{020608}{
R_-'=(1-T')N_-.
}

We differentiate~\eqref{020603} with respect to $x_1$:
\eq{260501}{
B_{x_1}(x)=
\begin{cases}
(\eps-x_2)N_++f_+', 
&\text{if }x_2\ge T(u);
\\
(\eps+x_2)N_-+f_-',
&\text{if }x_2\le T(u).
\end{cases}
}
We see that $B_{x_1}$ is continuous from both sides of the spine, and the limit from any side at the point $(u,T)$ on the spine is equal to $\half R$, i.\,e., $B_{x_1}$ is continuous. Since the function $A$ is $C^1$\!-smooth, this is enough for the continuity of the gradient. Alternatively, it is also easy to directly verify the continuity of $B_{x_2}$.

Using~\eqref{020605}, we differentiate~\eqref{020603} with respect to~$x_2$:
\eq{020609}{
B_{x_2}(x)=
\begin{cases}
-R_++B_{x_1}(x), 
&\text{if }x_2\ge T(u);
\\
\phantom{-}R_--B_{x_1}(x),
&\text{if }x_2\le T(u).
\end{cases}
}
Therefore, $B_{x_2}$ is continuous from both sides of the spine, and the limit from any side at the point $(u,T)$ on the spine is equal to $\half(R_--R_+)$.
\end{proof}

The function defined by~\eqref{220501} is a Bellman candidate under the condition that it is concave in the directions $x_1\pm x_2=\const$. For $C^2$ functions, such concavity is equivalent to the inequalities $B_{x_1x_1}+B_{x_2x_2}\pm B_{x_1x_2}\le0$. We check these inequalities separately on each part of the herringbone, but thanks to $C^1$-smoothness, this implies the desired concavity on the entire herringbone.

\begin{Prop}
Let $B$ be defined by~\eqref{220501} using a function $A,$ satisfying the condition~\eqref{220502}. Then $B$ is concave in the directions $x_1\pm x_2=\const$ if and only if
$$
\max\{f_-'',f_+''\}\le\frac{f_+'-f_-'}{2T}\,,
$$
and $A$ is defined by $T$ and the boundary values $f_\pm$ as follows\textup:
\eq{040614}{
A=\frac{\eps^2-T^2}{2\eps T}\Big[(\eps+T)f_+'-(\eps-T)f_-'\Big]+\frac{(\eps+T)f_++(\eps-T)f_-}{2\eps}\,.
}
\end{Prop}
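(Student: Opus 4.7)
The plan is as follows. By the previous proposition, the smoothness relation \eqref{220502} already guarantees that $B$ is $C^1$ across the spine. Since $B$ is linear along the ribs on each piece (direction $(1,-1)$ on the upper piece, direction $(1,1)$ on the lower piece), concavity in the rib direction is automatic, and the only nontrivial requirement is concavity in the transverse diagonal direction on each piece separately.

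First I would compute the directional second derivatives $B_{x_1x_1} \pm 2B_{x_1x_2} + B_{x_2x_2}$ on each piece starting from the formulas \eqref{260501} and \eqref{020609}, using the chain rule via \eqref{020605}. A direct computation should give on the upper piece, in the direction $(1,1)$,
$$
B_{x_1x_1} + 2B_{x_1x_2} + B_{x_2x_2} = 4\Big(f_+'' - N_+ + \tfrac{(\eps - x_2) N_+'}{1+T'}\Big),
$$
and by the symmetric calculation an analogous expression with $f_-$, $N_-$, and $(\eps+x_2)/(1-T')$ on the lower piece in the direction $(1,-1)$. Both expressions are affine in $x_2$, so their nonpositivity on the corresponding pieces is equivalent to nonpositivity at the two endpoints: the outer boundary $x_2 = \pm\eps$ and the spine $x_2 = T$.

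At the outer boundary the conditions reduce cleanly to $f_\pm'' \le N_\pm$. At the spine they take the form $(\eps \mp T) N_\pm' \le (1\pm T')(N_\pm - f_\pm'')$. The crucial step is to eliminate the $N_\pm'$ by differentiating the definitions $N_\pm = (\tfrac12 R - f_\pm')/(\eps \mp T)$ and substituting $R' = (1+T')N_+ + (1-T')N_-$ (obtained from \eqref{020607}, \eqref{020608}). After cancellations the upper spine condition should collapse to $(1-T')(N_- - N_+) \le 0$, and the lower one to $(1+T')(N_+ - N_-) \le 0$. Since $|T'|<1$, these two inequalities together force the equality $N_+ = N_-$.

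The remaining step is algebraic. Writing $N_+ = N_-$ using the definitions of $N_\pm$ and of $R = R_++R_-$ from \eqref{020601} yields a linear equation for $A$ whose unique solution is precisely the formula \eqref{040614}. Substituting this $A$ back shows $N_+ = N_- = \frac{f_+'-f_-'}{2T}$, so that the outer-boundary conditions $f_\pm'' \le N_\pm$ merge into the stated inequality $\max\{f_-'',f_+''\} \le \frac{f_+'-f_-'}{2T}$. The converse direction is obtained by reversing the chain of equivalences. I expect the main technical obstacle to be the algebraic reduction of the two spine conditions: it is not obvious a priori that they simplify to $N_\pm \le N_\mp$, and identifying the right combination of the identities relating $R'$, $N_\pm'$, and the coefficients $1 \pm T'$ is what makes both the closed form \eqref{040614} for $A$ and the single concavity inequality emerge naturally.
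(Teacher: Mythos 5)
Your proposal follows essentially the same route as the paper: compute the transverse diagonal second derivatives using \eqref{260501}, \eqref{020609}, observe that they are affine in $x_2$ along each rib, evaluate at the outer boundary to get $f_\pm'' \le N_\pm$, evaluate at the spine and simplify via $R'=(1+T')N_++(1-T')N_-$ to force $N_+=N_-$, which gives \eqref{040614} and $N_\pm=(f_+'-f_-')/(2T)$. All the intermediate formulas you wrote down (e.g.\ $B_{x_1x_1}+2B_{x_1x_2}+B_{x_2x_2}=4\bigl(f_+''-N_++\tfrac{(\eps-x_2)N_+'}{1+T'}\bigr)$ and the spine conditions $(1\mp T')(N_\mp-N_\pm)\le0$) match the paper's, so the plan is correct.
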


\begin{proof}
First, differentiate~\eqref{020609} with respect to $x_1$:
\eq{040601}{
B_{x_1x_2}=
\begin{cases}
-N_++B_{x_1x_1}, 
&\text{if }x_2\ge T(u),
\\
\phantom{-}N_--B_{x_1x_1},
&\text{if }x_2\le T(u).
\end{cases}
}
Then differentiate the same expression with respect to $x_2$:
\eq{040602}{
B_{x_2x_2}=
\begin{cases}
-N_++B_{x_1x_2}=-2N_++B_{x_1x_1}, 
&\text{if }x_2\ge T(u),
\\
-N_--B_{x_1x_2}=-2N_-+B_{x_1x_1},
&\text{if }x_2\le T(u).
\end{cases}
}
Therefore,
\eq{040603}{
B_{x_1x_1}+B_{x_2x_2}+2B_{x_1x_2}=
\begin{cases}
4\big(B_{x_1x_1}-N_+\big), 
&\text{if }x_2\ge T(u),
\\
\qquad\quad 0,
&\text{if }x_2\le T(u),
\end{cases}
}
and
\eq{040604}{
B_{x_1x_1}+B_{x_2x_2}-2B_{x_1x_2}=
\begin{cases}
\qquad\quad 0, 
&\text{if }x_2\ge T(u),
\\
4\big(B_{x_1x_1}-N_-\big),
&\text{if }x_2\le T(u).
\end{cases}
}

Thus, to prove the diagonal concavity of $B$ we need to calculate $B_{x_1x_1}$ and compare the obtained expression with $N_\pm$. First, consider the case where $x_2>T(u)$. Differentiating~\eqref{260501} with respect to $x_1$, we obtain
$$
\begin{aligned}
B_{x_1x_1}-N_+&=(\eps-x_2)N_+'u_{x_1}+f_+''-N_+,
\\
N_+'&=\frac{\half R_+'+\half R_-'-(1+T')f_+''}{\eps-T}+\frac{\half R-f_+'}{(\eps-T)^2}T'\rule{0pt}{25pt}
\\
&=\frac1{\eps-T}\big(\half(1+T')N_++\half(1-T')N_--(1+T')f_+''+N_+T'\big),
\end{aligned}
$$
from which
\eq{040605}{
B_{x_1x_1}-N_+=
\frac12\cdot\frac{\eps-x_2}{\eps-T}\Big(\frac{1+3T'}{1+T'}N_++\frac{1-T'}{1+T'}N_--2f_+''\Big)+f_+''-N_+.
}
This expression is linear on each extremal line (i.\,e., for fixed $u$), so it is non-positive for all values of $x_2$, $x_2 \in [T,\eps]$, if and only if it is non-positive at $x_2=\eps$ and $x_2=T$. Thus, we obtain the following necessary conditions:
\eq{040606}{
f_+''-N_+\le0;
}
\eq{040607}{
\frac{1-T'}{1+T'}(N_--N_+)\le0.
}

Now consider the lower half of the herringbone, where $x_2<T$. Similarly, we obtain
\eq{040608}{
B_{x_1x_1}-N_-=
\frac12\cdot\frac{\eps+x_2}{\eps+T}\Big(\frac{1+T'}{1-T'}N_++\frac{1-3T'}{1-T'}N_--2f_-''\Big)+f_-''-N_-,
}
from which the necessary conditions follow:
\eq{040609}{
f_-''-N_-\le0;
}
\eq{040610}{
\frac{1+T'}{1-T'}(N_+-N_-)\le0.
}
Note that conditions~\eqref{040606}, \eqref{040607}, \eqref{040609}, and~\eqref{040610} are not only necessary but also sufficient for~$B$ to be diagonally concave.

Comparing~\eqref{040607} and~\eqref{040610}, we obtain the necessary condition $N_+=N_-$ or
\eq{040611}{
(\eps+T)f_+'-(\eps-T)f_-'=TR=T\Big(\frac{A-f_-}{\eps+T}+\frac{A-f_+}{\eps-T}\Big),
}
which is equivalent to~\eqref{040614}. This equality defines~$A$ on the spine if a non-zero $T$ is given. 
When this condition is satisfied, we obtain
\eq{040617}{
N_+=N_-=\frac{1}{2T}\big((\eps+T)N_--(\eps-T)N_+\big) =\frac{f_+'-f_-'}{2T},
}
and the remaining conditions~\eqref{040606} and~\eqref{040609} transform into
\eq{040612}{
f_+''\le\frac{f_+'-f_-'}{2T},
}
\eq{040613}{
f_-''\le\frac{f_+'-f_-'}{2T}.
}
\end{proof}

\begin{Rem}
\label{120201}
Formula~\eqref{040611} implies that the left horizontal herringbone can intersect the $x_1$-axis only at a point $(u_0,0)$ for which $f'_+(u_0-\eps)=f'_-(u_0-\eps)$. We cannot calculate $A(u_0)$ directly from~\eqref{040614}. However, if such a point is isolated, then we can pass to the limit $T\to0$, $u\to u_0$ in~\eqref{040614} and find $A(u_0)$ in terms of boundary values and the slope of the spine at this point:
\eq{080701}{ 
A(u_0)=\frac12\big((f_++f_-)+\eps(f'_++f'_-)+\eps^2(f''_++f''_-)\big)
+\frac{\eps^2}{2T'(u_0)}(f''_+-f''_-).
}
Here all the values of $f_\pm$ and their derivatives are evaluated at~$u_0-\eps$. If $T'(u_0)=0$, then this formula does not work. This can only happen if $f''_+=f''_-$, i.\,e., $u_0-\eps$ is a multiple root of $f'_+-f'_-$. This case will be considered in one of the subsequent papers.

Symmetrically, the right horizontal herringbone can intersect the $x_1$-axis only at a point $(u_0,0)$ where $f'_+(u_0+\eps)=f'_-(u_0+\eps)$ {\bf(}see~\eqref{060604}{\bf)}. 

If the spine of a horizontal herringbone not only intersects the $x_1$-axis but coincides with it on some segment, then condition~\eqref{040611} shows that this is possible only if the functions $f_+$ and $f_-$ differ by a constant on the corresponding interval. For small $\eps$, the Bellman function inside the interval will resemble the one built for a symmetric strip, i.\,e., when $f_+=f_-$ everywhere (see~\cite{SymStr}). However, we are not ready to consider such a case and therefore postpone its investigation.
\end{Rem}

So now we have two equations~\eqref{040614} and~\eqref{220502} for two unknown functions $A$ and $T$. We consider~\eqref{040614} as the definition of~$A$ in terms of $T$. The function $T$ is defined by the differential equation~\eqref{040615}, which we derive by substituting~\eqref{040614} into~\eqref{220502}.
\begin{Prop}
If $T$ defines the spine of the left horizontal herringbone in the foliation of a~diagonally concave function with boundary values $f_\pm$\textup, then it satisfies the following differential equation
\eq{040615}{
T'=\frac{(\eps-T)D\ut{L}_--(\eps+T)D\ut{L}_+}{(\eps-T)D\ut{L}_-+(\eps+T)D\ut{L}_+},
}
where
\eq{040616}{
D\ut{L}_+=\frac{f_+'-f_-'}{2T}-f_+''\ge0\quad\text{and}\quad 
D\ut{L}_-=\frac{f_+'-f_-'}{2T}-f_-''\ge0\,.
}
\end{Prop}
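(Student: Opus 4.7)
The plan is exactly as indicated in the paragraph preceding the statement: substitute \eqref{040614} into the consistency relation \eqref{220502} and solve for $T'$. Once $A$ is taken from \eqref{040614} and $A'$ is computed by the chain rule, both sides of \eqref{220502} are affine in $T'$, so matching yields a linear equation for $T'$ whose solution is the desired ODE.

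To keep the algebra manageable, I would work with the equivalent identity $TR=(\eps+T)f_+'-(\eps-T)f_-'$ from \eqref{040611} rather than with \eqref{040614} directly. Differentiating \eqref{040611} with respect to $u$ (and recalling that $\tfrac{d}{du}f_\pm'=(1\pm T')f_\pm''$) produces
\[
T'R+TR'=T'(f_+'+f_-')+(\eps+T)(1+T')f_+''-(\eps-T)(1-T')f_-''.
\]
Two auxiliary simplifications reduce this to a one-line linear equation in $T'$: first, $R'=R_+'+R_-'=2D$ where $D\df(f_+'-f_-')/(2T)$, combining \eqref{020607}--\eqref{020608} with the equality $N_+=N_-=D$ from \eqref{040617}; second, $R-(f_+'+f_-')=2\eps D$, by direct algebra from the explicit expression $R=((\eps+T)f_+'-(\eps-T)f_-')/T$ that is equivalent to \eqref{040611}.

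After these substitutions, isolating $T'$ gives a rational expression of the form $\alpha T'=\beta$ in which the coefficient of $D$ alone in $\alpha$ collapses via $(\eps+T)+(\eps-T)=2\eps$ and the coefficient of $D$ alone in $\beta$ collapses via $(\eps+T)-(\eps-T)=2T$. Writing $f_\pm''=D-D\ut{L}_\pm$ then causes the plain $D$-terms to cancel in both $\alpha$ and $\beta$, and what remains is precisely the numerator and denominator in \eqref{040615}. The non-negativity $D\ut{L}_\pm\ge 0$ is not a new claim: it restates the diagonal concavity inequalities \eqref{040612}--\eqref{040613} proved in the preceding proposition, which hold under the standing hypothesis here that $B$ is diagonally concave.

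I expect the only real obstacle to be the bookkeeping in the chain rule: every occurrence of $f_\pm'$ and $f_\pm''$ contributes an extra factor of $1\pm T'$ when differentiated, and it is easy to lose a sign. Routing the computation through \eqref{040611} rather than through a direct expansion of \eqref{040614} sidesteps the heaviest part of this algebra and makes the cancellations leading to \eqref{040615} transparent.
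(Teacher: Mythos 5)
Your proposal matches the paper's proof essentially step for step: the paper also differentiates \eqref{040611}, uses $TR'=f_+'-f_-'$ (your $R'=2D$) obtained from \eqref{020607}--\eqref{020608} and \eqref{040617}, and rewrites $R=f_+'+f_-'+\tfrac\eps{T}(f_+'-f_-')$ (your $R-(f_+'+f_-')=2\eps D$) before collecting $T'$-terms and expressing the result via $D\ut{L}_\pm$. Your remark that $D\ut{L}_\pm\ge0$ just restates \eqref{040612}--\eqref{040613} is also in line with how the paper treats these inequalities.
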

\begin{proof}
We differentiate~\eqref{040611} and get
\eq{090501}{
(f_+'+f_-')T'+(\eps+T)(1+T')f_+''-(\eps-T)(1-T')f_-''=T'R+TR'.
}
Using~\eqref{020607}, \eqref{020608}, and~\eqref{040617}, we obtain
$$
TR'=T(1+T')N_++T(1-T')N_-=f_+'-f_-',
$$
and~\eqref{040611} can be written as
$$
R=f_+'+f_-'+\frac\eps{T}(f_+'-f_-').
$$
Substituting these formulas into~\eqref{090501} and collecting terms with $T'$, we obtain
$$
T'\big[(\eps+T)f_+''+(\eps-T)f_-''-\frac\eps{T}(f_+'-f_-')\big]=(\eps-T)f_-''-(\eps+T)f_+''+f_+'-f_-',
$$
or 
$$
T'\big[-(\eps+T)D\ut{L}_+-(\eps-T)D\ut{L}_-\big]=-(\eps-T)D\ut{L}_-+(\eps+T)D\ut{L}_+,
$$
which coincides with~\eqref{040615}.
\end{proof}

We conclude this section by listing the changes in the formulas for a left horizontal herringbone that need to be made to obtain the corresponding formulas for a right herringbone. As we mentioned at the beginning of the proof of Proposition~\ref{050608}, we need to change the direction of the $x_1$-axis, thus we formally reverse the sign of all first derivatives. To distinguish right and left herringbones, we will use the indices $\RR$ and $\LL$. For example, $T\ti{R}$ and $T\ti{L}$ are functions defining the spines of the right and left herringbones, $A\ti{R}$ and $A\ti{L}$ are the values of $B$ on the corresponding spines, and so on.

Until the end of this section, we consider the right herringbone, so all objects can be marked with the index $\RR$, but usually we omit these indices.

A subtle change has to be made: the argument of $f_+$ and its derivatives is $u-T+\eps=x_1-x_2+\eps$, and the argument of the function $f_-$ is $u+T+\eps=x_1+x_2+\eps$. The function $u=u(x)$ satisfies the relation
\eq{050605}{
\begin{cases}
x_1-x_2=u-T,
&\text{if }x_2\ge T(u),
\\
x_1+x_2=u+T, 
&\text{if }x_2\le T(u),
\end{cases}
}
instead of~\eqref{020604}. Therefore, instead of~\eqref{020605} we obtain 
\eq{050606}{
u_{x_1}=
\begin{cases}
\frac1{1-T'},
&\text{if }x_2\ge T(u),
\\
\frac1{1+T'},
&\text{if }x_2\le T(u),
\end{cases}
\quad\text{and}\quad
u_{x_2}=
\begin{cases}
-\frac1{1-T'},
&\text{if }x_2\ge T(u),
\\
\phantom{-}\frac1{1+T'},
&\text{if }x_2\le T(u).
\end{cases}
} 
Definition~\eqref{220501} of~$B$ remains the same, as well as the definition~\eqref{020601} of $R_\pm$ and $R$. We change the signs of all derivatives in~\eqref{020602}, and it takes the form
\eq{050607}{
2A'=-(1-T')R_--(1+T')R_+.
}
This formula is given in Proposition~\ref{050608}, see~\eqref{220503}.

We make a change in the definition of $N_\pm$ by changing the signs of derivatives. Instead of~\eqref{020606} we write
\eq{050609}{
N_+\ut{R}\df-\frac{\half R+f_+'}{\eps-T}\quad\text{and}\quad N_-\ut{R}\df-\frac{\half R+f_-'}{\eps+T}.
}
Then, instead of~\eqref{020607} and~\eqref{020608} we get
\eq{050610}{
R_+'=(1-T')N_+\qquad\text{and}\qquad R_-'=(1+T')N_-.
}
Formula~\eqref{260501} for $B_{x_1}$ remains the same, and in~\eqref{020609} we need to change the sign of $B_{x_1}$:
\eq{050611}{
B_{x_2}=
\begin{cases}
-R_+-B_{x_1}, 
&\text{if }x_2\ge T(u),
\\
\phantom{-}R_-+B_{x_1},
&\text{if }x_2\le T(u).
\end{cases}
}

Instead of conditions~\eqref{040603} and~\eqref{040604} we obtain
\eq{060601}{
B_{x_1x_1}+B_{x_2x_2}+2B_{x_1x_2}=
\begin{cases}
\qquad\quad 0, 
&\text{if }x_2\ge T(u),
\\
4\big(B_{x_1x_1}+N_-\big),
&\text{if }x_2\le T(u),
\end{cases}
}
and
\eq{060602}{
B_{x_1x_1}+B_{x_2x_2}-2B_{x_1x_2}=
\begin{cases}
4\big(B_{x_1x_1}+N_+\big),
&\text{if }x_2\ge T(u),
\\
\qquad\quad 0,
&\text{if }x_2\le T(u).
\end{cases}
}

The condition of diagonal concavity leads to the same relation $N_+=N_-$, giving us the definition of $A$, where now there will be a different sign in front of $R$. Instead of~\eqref{040611}, we obtain
\eq{060603}{
(\eps-T)f_-'-(\eps+T)f_+'=TR=T\Big(\frac{A-f_-}{\eps+T}+\frac{A-f_+}{\eps-T}\Big),
}
from which
\eq{060604}{
A=\frac{\eps^2-T^2}{2\eps T}\Big[(\eps-T)f_-'-(\eps+T)f_+'\Big]+\frac{(\eps+T)f_++(\eps-T)f_-}{2\eps}
}
instead of~\eqref{040614}. The expression~\eqref{040617} for $N_\pm$ remains the same
\eq{060605}{
N_+=N_-=\frac{f_+'-f_-'}{2T},
}
but we must change the signs in the conditions of diagonal concavity~\eqref{040612} and~\eqref{040613}:
\eq{060606}{
f_+''\le\frac{f_-'-f_+'}{2T},
}
\eq{060607}{
f_-''\le\frac{f_-'-f_+'}{2T}.
}
In this situation, it is natural to introduce functions
\eq{141201}{
D_+\ut{R}=\frac{f_-'-f_+'}{2T}-f_+''\qquad\text{and}\qquad D_-\ut{R}=\frac{f_-'-f_+'}{2T}-f_-''\,.
}
With this definition, the condition of diagonal concavity remains unchanged: $D_\pm\ut{R}\ge0$. Finally, the analogue of~\eqref{040615} is the following equation:
\eq{060612}{
T'\ti{R}=\frac{(\eps+T\ti{R})D_+\ut{R}-(\eps-T\ti{R})D_-\ut{R}}{(\eps+T\ti{R})D_+\ut{R}+(\eps-T\ti{R})D_-\ut{R}}\,.
}

So far, we have used the notations $D_\pm\ut{R}$ and $D_\pm\ut{L}$ for functions of the variable $u$. Now we introduce functions $D_\pm$ defined in the entire plane except the $x_1$-axis.
\begin{Def}
\eq{060608}{
D_+(x)\df\frac{f_+'(x_1+x_2)-f_-'(x_1-x_2)}{2x_2}-f_+''(x_1+x_2);
}
\eq{060609}{
D_-(x)\df\frac{f_+'(x_1+x_2)-f_-'(x_1-x_2)}{2x_2}-f_-''(x_1-x_2).
}
\end{Def}

The functions $D_\pm\ut{R}$ and $D_\pm\ut{L}$ defined above for the case of a right or a left herringbone with the functions $T\ti{R}$ and $T\ti{L}$ respectively, can be expressed in terms of $D_\pm$ as follows:
$$
D_\pm\ut{L}(u)=D_\pm(u-\eps,T\ti{L}(u)), \qquad D_\pm\ut{R}(u)=D_\pm(u+\eps,-T\ti{R}(u)).
$$

Now we can state that the conditions $D_\pm\ge0$ are necessary and sufficient for the diagonal concavity not only on the herringbones but also on the regions $\Om{R}$ and $\Om{L}$. The condition~\eqref{190501} of concavity on $\Om{R}$ can be rewritten as $D_\pm(u,\eps)\ge0$, and the condition~\eqref{200502} of concavity on $\Om{L}$ as $D_\pm(u,-\eps)\ge 0$.

\section{Investigation of the vector field of equation~\eqref{040615}}
\label{Sec5}

In this section, we explore the behavior of integral curves of the vector field of the differential equation~\eqref{040615}
describing left herringbones. We shift the first coordinate by $\eps$: instead of $u-\eps$, we write $x_1$, while $T$ remains the second coordinate $x_2$. Thus, for the left herringbone, the argument of $f_+$ will be $x_1+x_2$ instead of $u+T-\eps$, and the argument of $f_-$ will be $x_1-x_2$ instead of $u-T-\eps$. This is more convenient for consideration of the evolution of integral curves when the parameter $\eps$ increases. If $f'_+(u_0)=f'_-(u_0)$ for some $u_0$, then the node $(u_0,0)$ does not move as $\eps$ grows. The corresponding point of the intersection of the left herringbone with the central line of the strip is $(u_0+\eps,0)$, i.\,e., the left herringbone moves to the right.

In the following, we consider the parametrization $x=x(t)$ of the investigated integral curves of the system
\eq{180901}{
\begin{cases}
2\dot x_1=(\eps-x_2)\big[f_+'(x_1+x_2)-f_-'(x_1-x_2)-2x_2f_-''(x_1-x_2)\big]
\\
\qquad+(\eps+x_2)\big[f_+'(x_1+x_2)-f_-'(x_1-x_2)-2x_2f_+''(x_1+x_2)\big],
\\
2\dot x_2=(\eps-x_2)\big[f_+'(x_1+x_2)-f_-'(x_1-x_2)-2x_2f_-''(x_1-x_2)\big]
\\
\qquad-(\eps+x_2)\big[f_+'(x_1+x_2)-f_-'(x_1-x_2)-2x_2f_+''(x_1+x_2)\big].
\end{cases}
}

To write shorter formulas, we omit the arguments of $f_\pm$ and their derivatives, assuming that the functions with the sign $\pm$ always have the argument $x_1\pm x_2$. Thus, instead of~\eqref{180901}, we write
\eq{180902}{
\begin{cases}
\dot x_1=(\eps-x_2)\big[\half(f_+'-f_-')-x_2f_-''\big]+(\eps+x_2)\big[\half(f_+'-f_-')-x_2f_+''\big],
\\
\dot x_2=(\eps-x_2)\big[\half(f_+'-f_-')-x_2f_-''\big]-(\eps+x_2)\big[\half(f_+'-f_-')-x_2f_+''\big]
\end{cases}
}
or
\eq{260901}{
\begin{cases}
\dot x_1=\eps(f_+'-f_-')-x_2(\eps-x_2)f_-''-x_2(\eps+x_2)f_+'',
\\
\dot x_2=x_2(f_-'-f_+')-x_2(\eps-x_2)f_-''+x_2(\eps+x_2)f_+''.
\end{cases}
}

We start with the description of stationary points of the system, i.\,e., points where the right-hand side of the equation is zero. First, consider stationary points lying on the central line $x_2=0$. They have the form $(u_0,0)$, where $u_0$ is determined by the equation
\eq{011002}{
f_+'(u_0)-f_-'(u_0)=0.
}
The intervals of the central line between these stationary points are integral curves of the vector field. Any left herringbone can intersect the central line of the strip only at points $(u_0+\eps,0)$, where $u_0$ is a solution to~\eqref{011002}. Symmetrically, the points of intersection of right herringbones with the central line can only have the form $(u_0-\eps,0)$.

To describe the behavior of an integral curve in a neighborhood of a stationary point, we calculate the Jacobian matrix of system~\eqref{180902}:
\eq{011004}{
J(x)\!=\!
\begin{pmatrix}
\scriptstyle\eps(f_+''-f_-'')-x_2(\eps-x_2)f_-'''-x_2(\eps+x_2)f_+'''&
\scriptstyle 2x_2(f_-''-f_+'')+x_2(\eps-x_2)f_-'''-x_2(\eps+x_2)f_+'''
\\
\scriptstyle x_2(f_-''-f_+'')-x_2(\eps-x_2)f_-'''+x_2(\eps+x_2)f_+'''&
\scriptstyle(f_-'-f_+')-(\eps-x_2)(f_-''-x_2f_-''')+(\eps+x_2)(f_+''+x_2f_+''')
\end{pmatrix}.
}

Let $u_0$ be a root of~\eqref{011002}. Then at the stationary point $x=(u_0,0)$ we have
$$
J(u_0,0)=\eps\big((f_+''(u_0)-f_-''(u_0)\big)\begin{pmatrix}
1&0
\\
0&1
\end{pmatrix}.
$$
Furthermore, we assume that equation~\eqref{011002} has only a finite number of simple roots. Therefore, $f_+''(u_0)-f_-''(u_0)\ne0$, and the Jacobian matrix is an identity operator up to a scalar multiplier. This means that the stationary point $(u_0,0)$ is always a dicritical node, and the local behavior of integral curves is shown in Figure~\ref{081202}\footnote{In local pictures we draw the behavior of integral curves of the linearized system, which differs from the original system by a diffeomorphism with a unit Jacobian matrix}.

\begin{figure}[h]
    \centering
    \includegraphics[scale = 0.6]{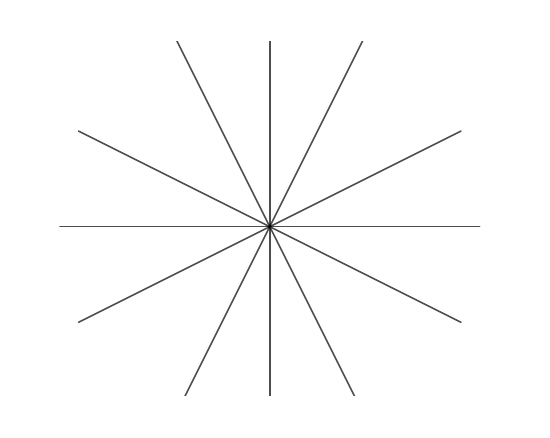}
    \caption{Integral curves near the node $(u_0,0)$.}
    \label{081202}
\end{figure}

As mentioned earlier, we assume the following conditions: $f_+'(u_0)=f_-'(u_0)$ and $f_+''(u_0)\ne f_-''(u_0)$. Consider two curves in a neighborhood of $(u_0,0)$, that are defined by equations~\eqref{300901} and~\eqref{300902}:
\eq{300901}{
f_+'(x_1+x_2)-f_-'(x_1-x_2)-2x_2f_+''(x_1+x_2)=0,
}
(this means $D_+=0$ for $x_2\ne 0$) and
\eq{300902}{
f_+'(x_1+x_2)-f_-'(x_1-x_2)-2x_2f_-''(x_1-x_2)=0,
}
(this means $D_-=0$ for $x_2\ne 0$). If $f_\pm$ are smooth, then each of the equations~\eqref{300901} and~\eqref{300902} has a unique smooth solution $x_1=x_1(x_2)$ in a neighborhood of $(u_0,0)$. This follows from the implicit function theorem and the fact that the derivatives of the left-hand sides of~\eqref{300901} and~\eqref{300902} at $(u_0,0)$ with respect to $x_1$ are equal to $f_+''(u_0)-f_-''(u_0) \ne 0$. These curves intersect at the node $(u_0,0)$. Later, we will see that for small $\eps$, the intersection of the curve~\eqref{300901} with the boundary $x_2=\eps$ and the intersection of the curve~\eqref{300902} with the boundary $x_2=-\eps$ give us two more stationary points of the vector field: $(u_+,\eps)$ and $(u_-,-\eps)$.

\begin{Prop}
\label{230701}
The curve $D_+(x)=0$ in a neighborhood of $(u_0,0)$ is described by the equation
\eq{230702}{
x_1=u_0+x_2+\vk_+x_2^2+O(x_2^3)\,.
}
The curve $D_-(x)=0$ in a neighborhood of $(u_0,0)$ is described by the equation
\eq{230703}{
x_1=u_0-x_2+\vk_-x_2^2+O(x_2^3)\,.
}
Here
\eq{230704}{
\vk_\pm=\pm\frac{2f_\pm'''(u_0)}{f_+''(u_0)-f_-''(u_0)}\,.
}
The sign of the function $D_\pm$ to the right of the curve $D_\pm=0$ coincides with the sign of the expression $x_2(f_+''(u_0)-f_-''(u_0)),$ and to the left\textup, the signs are opposite.
\end{Prop}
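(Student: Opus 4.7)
The plan is to locally solve each equation $D_\pm=0$ for $x_1$ as a smooth function of $x_2$, using the implicit function theorem, and then read off the first two Taylor coefficients at $x_2=0$.

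First I would set $x_1=u_0+\xi$ and work with the rescaled functions $G_\pm(\xi,x_2)\df 2x_2 D_\pm(u_0+\xi,x_2)$, so that
\[
G_+(\xi,x_2)=f_+'(u_0+\xi+x_2)-f_-'(u_0+\xi-x_2)-2x_2 f_+''(u_0+\xi+x_2),
\]
and similarly for $G_-$. Taylor-expanding at $(\xi,x_2)=(0,0)$ and using $f_+'(u_0)=f_-'(u_0)$, I would check that the linear parts are
\[
G_+=(\xi-x_2)\bigl(f_+''(u_0)-f_-''(u_0)\bigr)+O((|\xi|+|x_2|)^2),
\]
\[
G_-=(\xi+x_2)\bigl(f_+''(u_0)-f_-''(u_0)\bigr)+O((|\xi|+|x_2|)^2).
\]
Since $f_+''(u_0)\ne f_-''(u_0)$ by assumption, the implicit function theorem gives unique smooth solutions $\xi=\xi_\pm(x_2)$ with $\xi_+'(0)=1$ and $\xi_-'(0)=-1$, which are the leading terms in~\eqref{230702} and~\eqref{230703}.

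Next I would compute $\vk_\pm$ by substituting the ansatz $\xi=\pm x_2+\vk_\pm x_2^2+O(x_2^3)$ back into $G_\pm=0$ and collecting the $x_2^2$ coefficient. With $\xi=x_2+O(x_2^2)$ one has $(\xi+x_2)^2=4x_2^2+O(x_2^3)$ and $(\xi-x_2)^2=O(x_2^4)$, so the second-order Taylor terms of $G_+$ combine into $-2x_2^2 f_+'''(u_0)$, while the first-order term contributes $\vk_+ x_2^2\bigl(f_+''(u_0)-f_-''(u_0)\bigr)$. Setting the sum to zero yields the stated value of $\vk_+$, and the $G_-$ case is symmetric, using $\xi=-x_2+O(x_2^2)$.

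Finally, for the sign statement I would differentiate: $\partial_{x_1} G_+(0,0)=f_+''(u_0)-f_-''(u_0)\ne 0$, so $G_+$ (equivalently $2x_2 D_+$) has the same sign as $(x_1-u_0-x_2-\vk_+ x_2^2)\bigl(f_+''(u_0)-f_-''(u_0)\bigr)$ in a neighborhood of $(u_0,0)$ off the curve, and dividing by $2x_2$ gives the claim for $D_+$. The case of $D_-$ is analogous. None of the steps presents a real obstacle; the only mildly delicate bookkeeping is keeping track of which second-order Taylor terms survive when $\xi\pm x_2=O(x_2^2)$, but this is routine once the ansatz is plugged in.
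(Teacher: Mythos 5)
Your proof is correct and follows essentially the same route as the paper: Taylor expansion, an implicit function theorem argument for existence and uniqueness, and then an ansatz to read off the second-order coefficient $\vk_\pm$. The one genuine technical improvement is the rescaling to $G_\pm\df 2x_2 D_\pm$, which removes the singularity at $x_2=0$ and lets you apply the implicit function theorem directly at the node $(u_0,0)$; the paper instead works with $D_\pm$ itself (defined only for $x_2\ne0$), and so must first establish existence of a root on $[u_0,u_0+2x_2]$ for each fixed small $x_2\ne0$ by a separate sign-change argument before invoking uniqueness, then worry implicitly about the limit as $x_2\to 0$. Your version is the cleaner of the two, and the sign determination via $\partial_\xi G_+(0,0)=f_+''(u_0)-f_-''(u_0)$ followed by division by $2x_2$ matches the paper's computation of $\partial D_+/\partial x_1$.
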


\begin{proof}
With a fixed $x_2\ne 0$, consider $D_+$ as a function of $x_1$ on the interval $[u_0,u_0+2x_2]$. If $x_2$ is small enough, this function has opposite signs at the endpoints of the interval:
$$
\begin{aligned}
D_+(u_0,x_2)&=\frac{f_+'(u_0)+x_2 f_+''(u_0)-f_-'(u_0)+x_2 f_-''(u_0)}{2x_2}-f_+''(u_0)+O(x_2)
\\
&=\half\big(f_-''(u_0)-f_+''(u_0)\big)+O(x_2)\,,
\\
D_+(u_0+2x_2,x_2)&=
\frac{f_+'(u_0)+3x_2 f_+''(u_0)-f_-'(u_0)-x_2 f_-''(u_0)}{2x_2}-f_+''(u_0)+O(x_2)\rule{0pt}{20pt}
\\
&=\half\big(f_+''(u_0)-f_-''(u_0)\big)+O(x_2)\,.
\end{aligned}
$$
Therefore, inside the interval, there exists a root $x_1=x_1(x_2)$. The uniqueness of the root is guaranteed by the implicit function theorem.

The sign of the function $D_+$ to the right of the curve $D_+=0$ coincides with the sign of the derivative
$$
\frac{\partial D_+(x)}{\partial x_1}=\frac{f_+''(x_1\!+x_2)\!-\!f_-''(x_1\!-\!x_2)}{2x_2}-f_+'''(x_1\!+x_2) 
= \frac{f_+''(u_0)\!-\!f_-''(u_0)}{2x_2}+O(1),
$$
and to the left, the sign is opposite.

To prove the representation~\eqref{230702}, we expand the function $x_1(x_2)$ up to a second-order terms in~$x_2$: 
$x_1=u_0+c_1x_2+c_2x_2^2+O(x_2^3)$.
Then,
$$
\begin{aligned}
0&=D_+(x_1,x_2)=\frac1{2x_2}\Big(f_+'(u_0)+f_+''(u_0)\big((c_1+1)x_2+c_2x_2^2\big)
\\
&\quad+\half f_+'''(u_0)\big((c_1+1)x_2\big)^2+O(x_2^3)
\\
&\quad-f_-'(u_0)-f_-''(u_0)\big((c_1-1)x_2+c_2x_2^2\big)-\half f_-'''(u_0)\big((c_1-1)x_2\big)^2+O(x_2^3)\Big)
\\
&\quad-f_+''(u_0)-(c_1+1)x_2 f_+'''(u_0)+O(x_2^2)
\\
&=\frac{c_1-1}2\big(f_+''(u_0)-f_-''(u_0)\big)+\Big(\frac{c_2}2\big(f_+''(u_0)-f_-''(u_0)\big)
\\
&\quad+\frac{c_1^2-2c_1-3}4f_+'''(u_0)-\frac{(c_1-1)^2}4f_-'''(u_0)\Big)x_2+O(x_2^2).
\end{aligned}
$$
The constant term gives $c_1=1$, and then the coefficient of $x_2$ gives $c_2=\varkappa_+$.

The proof of the second part of the proposition is completely similar: instead of $D_+$ we consider~$D_-$.
\end{proof}

Let us calculate the slope of the curve $D_+=0$:
$$
d(2x_2D_+)=(f_+''-f_-''-2x_2f_+''')dx_1+(f_+''+f_-''-2f_+''-2x_2f_+''')dx_2=0.
$$
Therefore,
$$
\frac{dx_2}{dx_1}=\frac{f_+''-f_-''-2x_2f_+'''}{f_+''-f_-''+2x_2f_+'''}=
1-\frac{4x_2f_+'''}{f_+''-f_-''+2x_2f_+'''}=1-2\vk_+x_2+O(x_2^2).
$$
Similarly, the slope of the curve $D_-=0$ is given by:
$$
\frac{dx_2}{dx_1}=-1+2\vk_-x_2+O(x_2^2).
$$
Thus, we make the following conclusion.
\begin{Rem}\label{rem150901}
For small $\eps$ the slope of the curve  $D_+=0$ is positive and if $\vk_+>0$, then it is less than $1$ in the upper half of the strip and greater than $1$ in the lower half. For small $\eps$ the slope of the curve $D_-=0$ is negative.
\end{Rem}

\begin{Cor}
\label{230705}
If $\eps$ is sufficiently small\textup, then there exists a unique 
root $u_+$  $(u_+\ut{L})$ of the equation $D_+(u,\eps)=0,$ i.\,e.\textup,
\eq{230706}{
f_+'(u+\eps)-f_-'(u-\eps)-2\eps f_+''(u+\eps)=0,
}
such that $u_+\in (u_0,u_0+2\eps)$. Moreover\textup,
\eq{230707}{
u_+=u_0+\eps+\vk_+\eps^2+O(\eps^3)\,.
}
Similarly\textup, there exists a unique root $u_-$ $(u_-\ut{L})$  of the equation $D_-(u,-\eps)=0,$ i.\,e.\textup,
\eq{230708}{
f_-'(u+\eps)-f_+'(u-\eps)-2\eps f_-''(u+\eps)=0,
}
such that $u_-\in (u_0,u_0+2\eps)$. Moreover\textup,
\eq{230709}{
u_-=u_0+\eps+\vk_-\eps^2+O(\eps^3)\,.
}
\end{Cor}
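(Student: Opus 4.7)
The plan is straightforward: the corollary is a direct specialization of Proposition~\ref{230701} to the boundary lines $x_2 = \pm\eps$. I would treat the two cases $u_+$ and $u_-$ in parallel.

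First, I would observe that the equation~\eqref{230706} defining $u_+$ is exactly $D_+(u,\eps) = 0$, i.e., the curve $D_+ = 0$ intersected with the horizontal line $x_2 = \eps$. Proposition~\ref{230701} describes this curve near the node $(u_0, 0)$ as $x_1 = u_0 + x_2 + \vk_+ x_2^2 + O(x_2^3)$. Setting $x_2 = \eps$ and renaming $x_1$ as $u$ gives $u_+ = u_0 + \eps + \vk_+ \eps^2 + O(\eps^3)$, which is~\eqref{230707}. For $u_-$, the equation~\eqref{230708} is $D_-(u, -\eps) = 0$, so I substitute $x_2 = -\eps$ into the parametrization~\eqref{230703}: $x_1 = u_0 - (-\eps) + \vk_-(-\eps)^2 + O(\eps^3) = u_0 + \eps + \vk_- \eps^2 + O(\eps^3)$, yielding~\eqref{230709}.

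For existence, uniqueness, and the location in $(u_0, u_0 + 2\eps)$, I would reuse the sign computation already carried out in the proof of Proposition~\ref{230701}. There it is shown that $D_+(u_0, x_2) = \half(f_-''(u_0) - f_+''(u_0)) + O(x_2)$ and $D_+(u_0 + 2x_2, x_2) = \half(f_+''(u_0) - f_-''(u_0)) + O(x_2)$, which have opposite signs for small $x_2$ under the standing assumption $f_+''(u_0) \ne f_-''(u_0)$. Specializing to $x_2 = \eps$, the Intermediate Value Theorem yields a root in $(u_0, u_0 + 2\eps)$, and uniqueness on this interval follows from the implicit function theorem, since $\partial_{x_1} D_+(u_0, 0) = f_+''(u_0) - f_-''(u_0) \ne 0$ forces $D_+(\cdot, \eps)$ to be strictly monotone in a neighborhood of the root for $\eps$ small. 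The analogous sign computation with $D_-$ at the endpoints $(u_0, -\eps)$ and $(u_0 + 2\eps, -\eps)$ handles $u_-$ identically.

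Since the statement is essentially a restatement of Proposition~\ref{230701} on the two boundary lines, I do not anticipate any real obstacle. The only point needing minor care is the sign bookkeeping in the $u_-$ case: substituting $x_2 = -\eps < 0$ into the parametrization $x_1 = u_0 - x_2 + \vk_- x_2^2 + O(x_2^3)$ correctly places the root on the positive side of $u_0$, consistent with $u_- \in (u_0, u_0 + 2\eps)$ as claimed.
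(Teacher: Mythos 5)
Your proposal is correct and takes the same approach the paper implicitly intends: the corollary is just Proposition~\ref{230701} evaluated at $x_2=\eps$ (for $D_+$) and $x_2=-\eps$ (for $D_-$), with existence, uniqueness, and the location $(u_0,u_0+2\eps)$ recycled from the sign computation and the implicit-function-theorem argument already in that proposition's proof. The only cosmetic slip is writing $\partial_{x_1}D_+(u_0,0)$, since $D_+$ is singular at $x_2=0$; what you mean is the $x_1$-derivative of the smooth numerator $2x_2D_+$ (i.e., of the left-hand side of~\eqref{300901}), which is exactly what the paper invokes.
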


The following simple statement plays an important role when investigating the behavior of the integral curves of the vector field~\eqref{180902}.

\begin{Prop}
\label{210901}
The slope of the integral curve of the field~\eqref{180902} at a point~$x$ is strictly increasing in~$\eps$ if $x_2D_+(x)D_-(x)>0,$ 
and strictly decreases if $x_2D_+(x)D_-(x)<0$.
\end{Prop}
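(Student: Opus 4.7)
The plan is to reduce the problem to a small algebraic computation by rewriting the right-hand side of \eqref{180902} in terms of the functions $D_\pm$. Recall from \eqref{060608}--\eqref{060609} that
\[
x_2 D_+(x) = \tfrac12\bigl(f_+'(x_1+x_2)-f_-'(x_1-x_2)\bigr) - x_2 f_+''(x_1+x_2),
\]
and similarly $x_2 D_-(x)$ equals the other bracket in \eqref{180902}. Substituting and pulling out the common factor $x_2$, the system becomes
\begin{align*}
\dot x_1 &= x_2\bigl[(\eps-x_2)D_-(x) + (\eps+x_2)D_+(x)\bigr], \\
\dot x_2 &= x_2\bigl[(\eps-x_2)D_-(x) - (\eps+x_2)D_+(x)\bigr].
\end{align*}
The key observation is that $D_\pm(x)$ depend only on the point $x$ and on the boundary data, not on $\eps$. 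So at a fixed $x$, the velocity vector is a linear function of $\eps$ with coefficients determined entirely by $x$ and $D_\pm(x)$.

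Next I would set $P = (\eps - x_2)D_-(x)$ and $Q = (\eps + x_2)D_+(x)$, so that the slope of the integral curve through $x$ equals
\[
m(\eps) = \frac{\dot x_2}{\dot x_1} = \frac{P - Q}{P + Q}.
\]
Since $\partial_\eps P = D_-(x)$ and $\partial_\eps Q = D_+(x)$, the quotient rule gives
\[
\frac{dm}{d\eps} = \frac{(D_- - D_+)(P+Q) - (P - Q)(D_- + D_+)}{(P+Q)^2} = \frac{2\bigl(D_-\,Q - D_+\,P\bigr)}{(P+Q)^2}.
\]
Expanding $D_- Q - D_+ P = D_+ D_-\bigl[(\eps + x_2) - (\eps - x_2)\bigr] = 2x_2 D_+(x) D_-(x)$ gives
\[
\frac{dm}{d\eps} = \frac{4\,x_2\,D_+(x)\,D_-(x)}{(P+Q)^2}.
\]
The denominator is strictly positive wherever $\dot x_1 \ne 0$, so the sign of $dm/d\eps$ agrees with the sign of $x_2 D_+(x) D_-(x)$, proving both claims of the proposition.

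I do not anticipate a real obstacle: the bulk of the work is the algebraic bookkeeping in rewriting \eqref{180902} via $D_\pm$, after which the computation is mechanical. The only mild subtlety is the non-vanishing of $\dot x_1$ (equivalently $P + Q$): at points where it vanishes the slope is infinite and one should phrase the monotonicity statement in terms of the inverse slope $\dot x_1/\dot x_2$, for which the analogous derivative computation gives $-4x_2 D_+ D_-/(P-Q)^2$ and yields the same conclusion. In any case, in the regime where both $D_\pm$ are bounded away from zero and $|x_2|<\eps$, the denominator $P+Q$ stays strictly positive, so the proof applies directly.
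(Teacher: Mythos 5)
Your proof is correct and follows essentially the same approach as the paper: rewrite the slope $\dot x_2/\dot x_1$ as $\bigl[(\eps-x_2)D_--(\eps+x_2)D_+\bigr]/\bigl[(\eps-x_2)D_-+(\eps+x_2)D_+\bigr]$, differentiate in $\eps$, and read off the sign from $4x_2 D_+ D_-/\bigl[(\eps-x_2)D_-+(\eps+x_2)D_+\bigr]^2$. You simply spell out the quotient-rule bookkeeping (and add a sensible remark about the $\dot x_1=0$ degeneracy) that the paper leaves implicit.
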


\begin{proof}
From~\eqref{180902} we obtain the following expression for the slope of the integral curve:
\eq{210902}{
\frac{\dot x_2}{\dot x_1}=\frac{(\eps-x_2)D_--(\eps+x_2)D_+}{{(\eps-x_2)D_-+(\eps+x_2)D_+}}\,.
}
Differentiating this expression with respect to $\eps$ gives a formula that directly implies the desired result:
\eq{210903}{
\frac{\partial}{\partial\eps}\Big(\frac{\dot x_2}{\dot x_1}\Big)=\frac{4x_2D_-D_+}{\big[(\eps-x_2)D_-+(\eps+x_2)D_+\big]^2}\,.
}
\end{proof}

Now we investigate the vector field around the stationary point $(u_+,\eps)$. Since $u_+$ satisfies~\eqref{230706}, formula~\eqref{011004} takes the form
\eq{210101}{
J(u_+,\eps)=\eps\big(f_+''(u_++\eps)-f_-''(u_+-\eps)-2\eps f_+'''(u_++\eps)\big)
\begin{pmatrix}
\phantom{i}1&-2-3\eps\kappa_+
\\
-1&\eps\kappa_+
\end{pmatrix},
}
where
\eq{180503}{
\kappa_+=\frac{2f_+'''(u_++\eps)}{f_+''(u_++\eps)-f_-''(u_+-\eps)-2\eps f_+'''(u_++\eps)}\,,
}
and $\kappa_+\to\varkappa_+$ as $\eps\to0$.
For small values of the parameter $\eps$, we have 
$$
J(u_+,\eps)=\eps\big(f_+''(u_0)-f_-''(u_0)\big)
\begin{pmatrix}
\phantom{i}1&\!\!-2\;
\\
-1&0
\end{pmatrix} \ +\  O(\eps^2).
$$
The eigenvalues of the matrix on the right-hand side are $-1$ and $2$, i.\,e., this is a saddle point. The eigenvectors are $(\begin{smallmatrix}1\\1\end{smallmatrix})$ and $(\begin{smallmatrix}2\\\!\!-1\end{smallmatrix})$, meaning
two integral curves passing through this stationary point have slopes $1$ and $-\half$ (up to $O(\eps)$). The behavior of integral curves around the stationary point $(u_+,\eps)$ is shown in Fig.~\ref{101201}.

\begin{figure}[h]
    \centering
    \includegraphics[scale = 0.5]{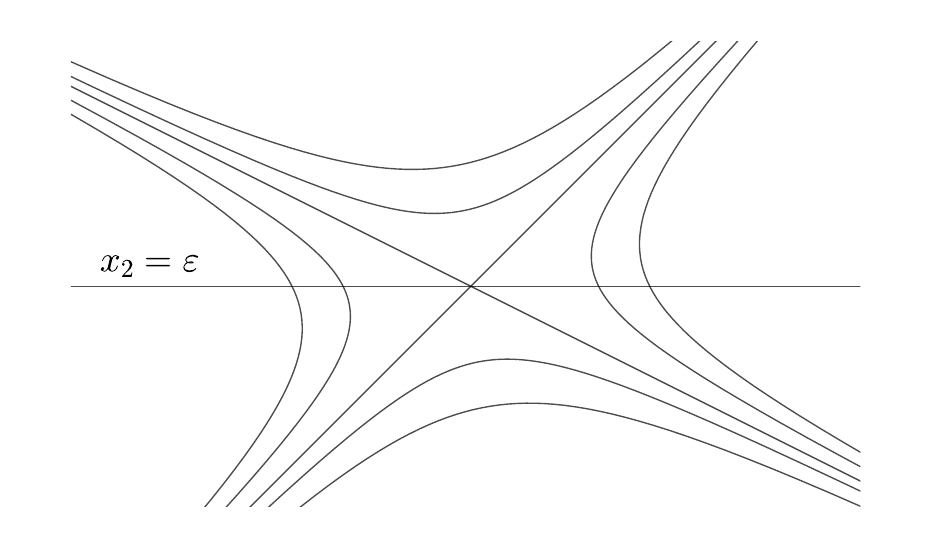}
    \caption{Integral curves near the saddle point $(u_+\ut{L},\eps)$.}
    \label{101201}
\end{figure}

We need a more detailed approximation of the eigenvector of~\eqref{210101} corresponding to the eigenvalue
$\lambda$, which approaches $-1$ as $\eps\to0$. It is easy to compute that 
$\lambda = -1-\frac13\varkappa_+\eps+O(\eps^2)$, and the corresponding eigenvector is equal to
\eq{220101}{
\left(\begin{matrix}1\\1-\frac43\varkappa_+\eps\end{matrix}\right)
}
up to $O(\eps^2)$. 
Thus the slope of the corresponding integral curve at $(u_+,\eps)$ is 
$1-\frac43\vk_+\eps + O(\eps^2)$.
It will be important to compare the slope of this curve with the slope of the curve $D_+(x)=0$ at $(u_+,\eps)$. We use~\eqref{230702} to calculate the slope of the curve $D_+(x)=0$:
\eq{240701}{
\frac{dx_2}{dx_1}=1-2\vk_+x_2+O(x_2^2)\,.
}
Thus, the relative position of these two curves near $(u_+,\eps)$ is determined by the sign of~$\vk_+$.

Now consider the stationary point $(u_-,-\eps)$. Since $u_-$ satisfies~\eqref{230708},
formula~\eqref{011004} takes the form
$$
J(u_-,-\eps)=\eps\big(f_+''(u_--\eps)-f_-''(u_-+\eps)+2\eps f_-'''(u_-+\eps)\big)
\begin{pmatrix}
1&2+3\eps\kappa_-
\\
1&\eps\kappa_-
\end{pmatrix},
$$
where
$$
\kappa_-=\frac{2f_-'''(u_-+\eps)}{f_-''(u_-+\eps)-f_+''(u_--\eps)-2\eps f_-'''(u_-+\eps)}\,.
$$
Therefore, for small $\eps$ we have
$$
J(u_-,-\eps)\approx\eps\big(f_+''(u_0)-f_-''(u_0)\big)
\begin{pmatrix}
1&2
\\
1&0
\end{pmatrix}.
$$
The matrix has the same eigenvalues $-1$ and $2$, so this is also a saddle point. Now the eigenvectors are
$(\begin{smallmatrix}1\\-1\end{smallmatrix})$ and $(\begin{smallmatrix}2\\1\end{smallmatrix})$, 
meaning two integral curves passing through this stationary point have slopes $-1$ and $\frac12$ (up to $O(\eps)$). 
The behavior of integral curves near the stationary point $(u_-,-\eps)$ is shown in Fig.~\ref{141207}.

\begin{figure}[h]
    \centering
    \includegraphics[scale = 0.05]{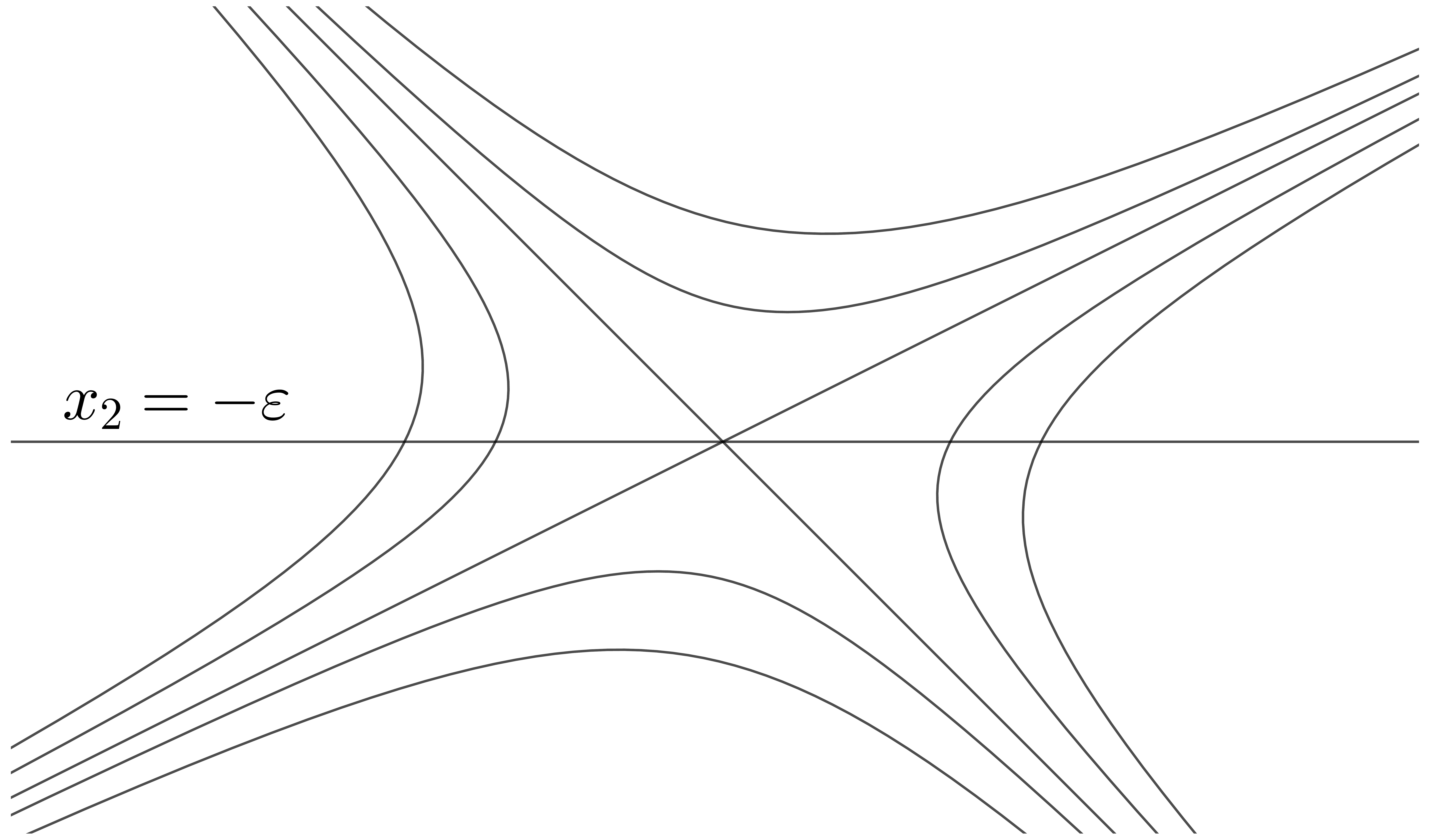}
    \caption{Integral curves near the saddle point $(u_-\ut{L},-\eps)$.}
    \label{141207}
\end{figure}

Combining all this information, we now understand the behavior of integral curves of the vector field near 
these three stationary points for small values of $\eps$. The overall picture is shown in Fig.~\ref{141208}.

\begin{figure}[h]
    \centering
    \includegraphics[scale = 0.4]{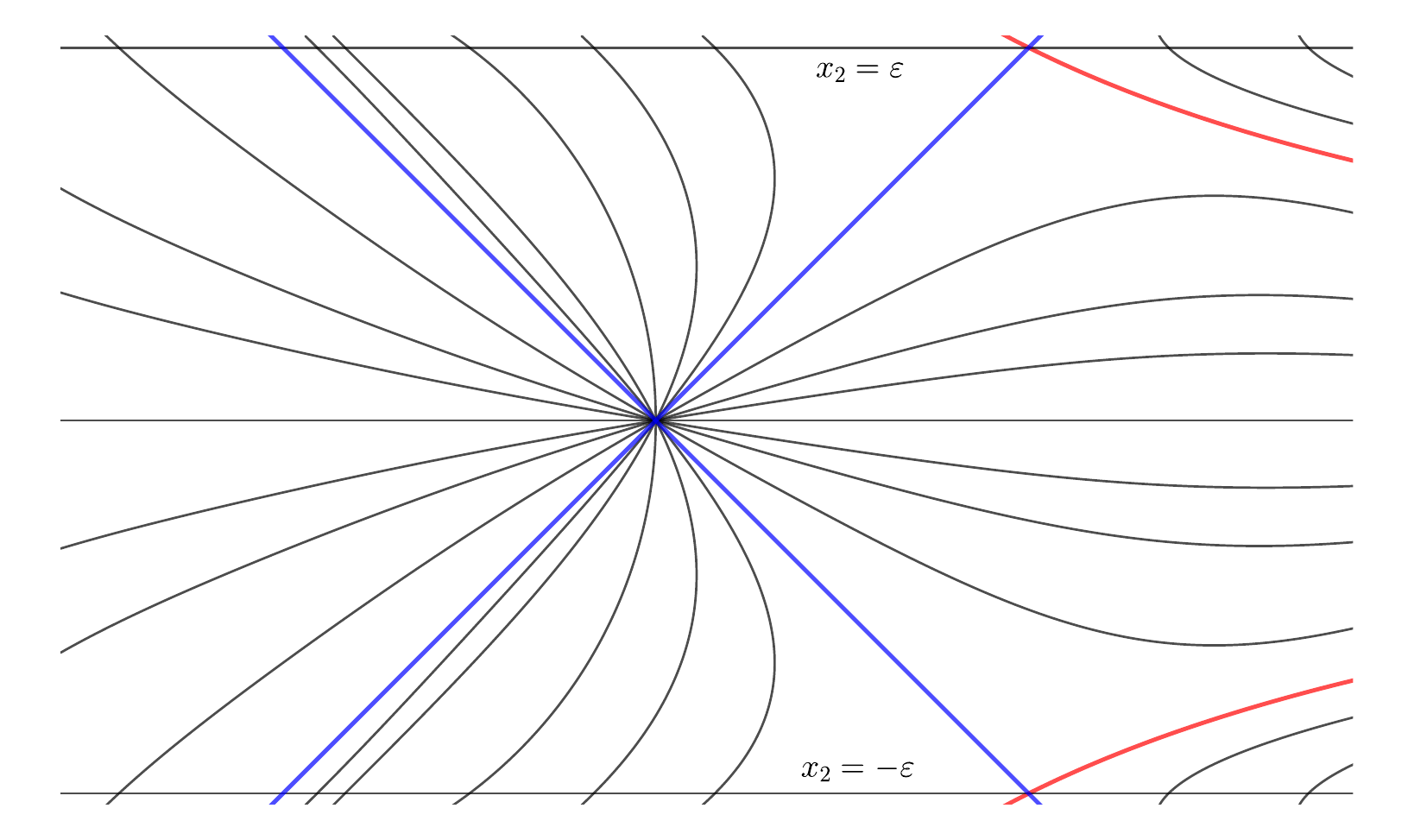}
    \caption{Integral curves near the node $(u_0,0)$ for small values of $\eps$.}
    \label{141208}
\end{figure}

At the end of this section, we list the changes that need to be made if we consider 
the vector field of the equation~\eqref{060612}, the integral curve of which may be the spine of a right herringbone. In this case, 
we will have a symmetric picture. For sufficiently small $\eps$, we have the following three stationary points: $(u_0,0)$, $(u_+\ut{R},\eps)$, and $(u_-\ut{R},-\eps)$, where $u_+\ut{R}$ is the unique solution of the equation
\eq{201202}{
f_+'(u-\eps)-f_-'(u+\eps)+2\eps f_+''(u-\eps)=0
}
in the interval $(u_0-2\eps,u_0)$, and $u_-\ut{R}$ is the unique solution of the equation
\eq{201203}{
f_-'(u-\eps)-f_+'(u+\eps)+2\eps f_-''(u-\eps)=0
}
in the same interval. Furthermore,
\eq{181201}{
u_\pm\ut{R}=u_0-\eps+\varkappa_\pm\eps^2+o(\eps^2)\,,
}
where $\varkappa_\pm$ are defined in~\eqref{230704}.

\section{Fissures}
\label{Fissures}

We are ready to construct a foliation that we call a fissure. First, let us consider left fissures. Recall that these are horizontal herringbones that extend from left to right from one boundary to the other. 
Any fissure must intersect the middle line of the strip, meaning there must be a value $u$ such that $T(u)=0$. 
From the previous section, we know that the point $(u,0)$ must be a stationary point, where $f_+'(u-\eps)=f_-'(u-\eps)$. 
This can also be seen directly from~\eqref{040611}. \big(For the right herringbone, this condition is $f_+'(u+\eps)=f_-'(u+\eps)$\big). 
Thus, these are precisely the points around which we could not construct either the right or the left simple foliation for small $\eps$, see Section~\ref{Sec2}.

In this section, for sufficiently small $\eps$, we build a fissure near a point $u_0$ that solves the equation~\eqref{210501}. 
The SW-fissure is shown in Fig.~\ref{181203}. The term ``SW-fissure'' means that the fissure comes from the southwest. In other words, it is a horizontal herringbone extending from left to right from the bottom boundary to the top.  We denote the region foliated by this fissure as $\Om{SW}(v,u)$. Here, $u$ and $v$ are the first coordinates of the endpoints of the segment of the intersection of the foliated region with the middle line.
\begin{figure}[h]
    \centering
    \includegraphics[scale = 0.4]{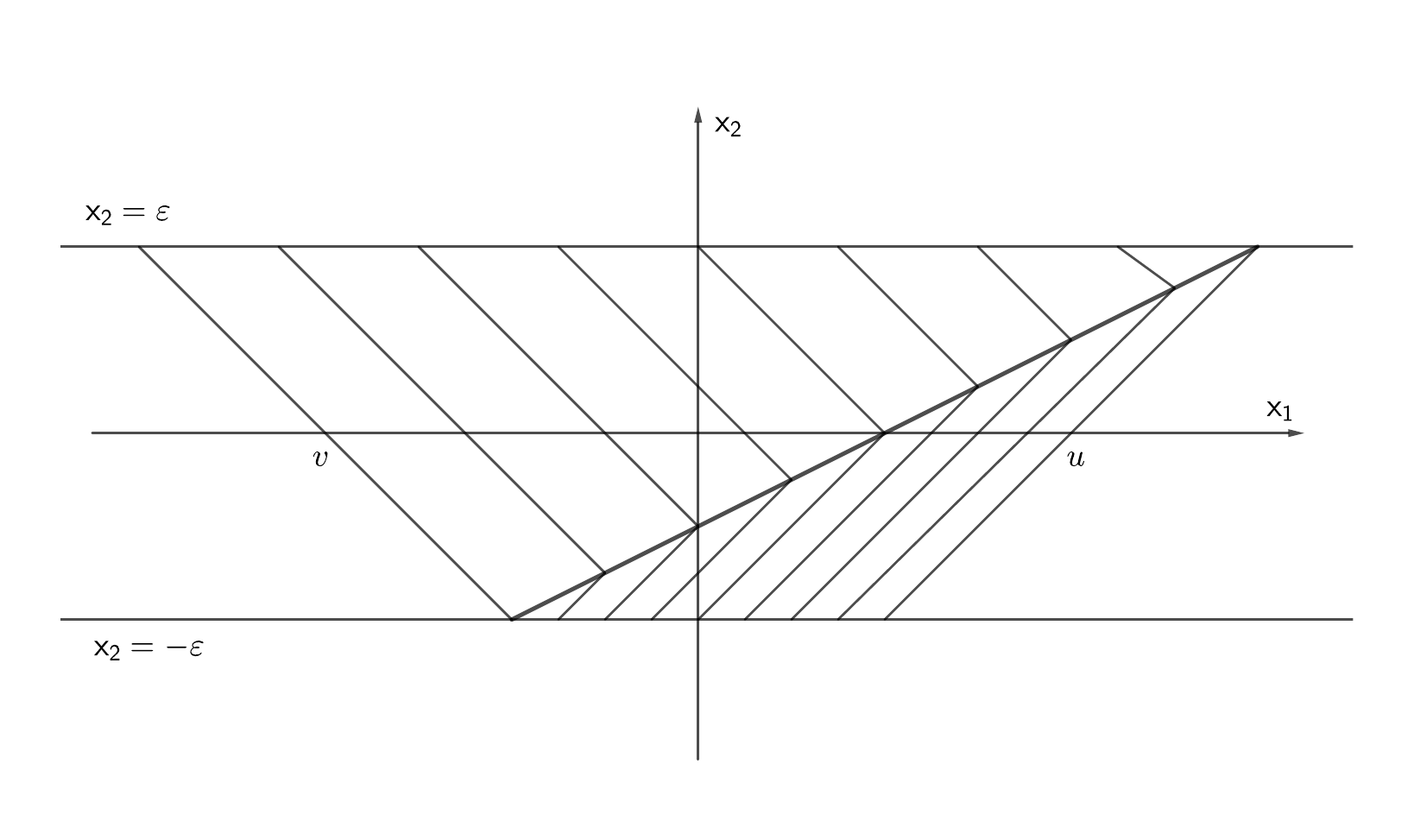}
    \caption{SW-fissure on its region $\Om{SW}(v,u)$.}
    \label{181203}
\end{figure}

This foliation may appear not for all possible boundary values. Therefore, we impose the following additional conditions:

\begin{itemize}
\item
$u_0$ is a simple root of the equation~\eqref{210501}, meaning 
$$
f_+'(u_0)=f_-'(u_0)\qquad\text{and}\qquad f_+''(u_0)\ne f_-''(u_0);
$$
\item
$$
f_+'''(u_0)\ne0, \qquad\text{if}\qquad f_+''(u_0)>f_-''(u_0),
$$
\vskip-5pt
and
\vskip-15pt
$$
f_-'''(u_0)\ne0, \qquad\text{if}\qquad f_+''(u_0)<f_-''(u_0).
$$
\end{itemize}
\smallskip
Foliations that appear when these additional conditions are not satisfied are more complicated and will be considered later.

Our goal is to prove the following statement.
\begin{Prop}
\label{200101}
Let $f_+'(u_0)=f_-'(u_0)$\textup, $f_+''(u_0)>f_-''(u_0)$\textup, $f_+'''(u_0)>0$\textup, and let 
$u_+\!=u_+\ut{L}$ be the root of the equation~\eqref{230706} described in Corollary~\textup{\ref{230705}}. 
Then for sufficiently small $\eps,$ there exists a~\textup{SW}-fissure with a spine function $T=T\ti{L}$ defined on some interval 
$[v_- +\eps, u_+ +\eps]$ such that 
\begin{itemize}
\item
$T(u_0+\eps)=0$\textup, \ $T(u_+\!+\eps)=\eps$\textup;
\item 
$v_-\!=v_-\ut{L}$\textup, \;$v_-\!\!<u_0-\eps$\textup, 
solves the equation 
$T(v_-\!+\eps)=-\eps$\textup;
\item
Formula~\eqref{220501} with the function $A=A\ti{L}$ given by~\eqref{040614} 
defines a Bellman candidate on~$\Om{SW}(v_-,u_+)$ foliated by this fissure\textup;
\item
There exists a $\delta=\delta(\eps)$\textup, such that the standard Bellman candidates on 
$\Om{R}(u_+,u_+\!+\delta)$ and $\Om{L}(v_-\!-\delta,v_-)$ \textup(see~\eqref{180501} and~\eqref{200501}\textup) 
together with the described candidate on $\Om{SW}(v_-,u_+)$ 
form a $C^1$\!-smooth Bellman candidate on the union of these three regions.
\end{itemize}
\end{Prop}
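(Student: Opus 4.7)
The plan is to realize the spine of the SW-fissure as a single integral curve of the vector field~\eqref{180902}, produced by gluing at the dicritical node $(u_0,0)$ a saddle separatrix descending from $(u_+,\eps)$ with its continuation to the lower boundary, and then to pass from Section~\ref{Sec5}'s shifted coordinates back to the original ones by $x_1\mapsto x_1+\eps$ to get the spine function $T=T\ti{L}$. First I apply the stable manifold theorem at the saddle $(u_+,\eps)$: by~\eqref{220101} the eigenvector for the eigenvalue near $-1$ has slope $1-\tfrac{4}{3}\vk_+\eps+O(\eps^2)$, which is strictly less than $1$ because $\vk_+>0$ under the hypotheses $f_+''(u_0)>f_-''(u_0)$ and $f_+'''(u_0)>0$. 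This gives a unique $C^1$ integral curve $\gamma$ entering $(u_+,\eps)$ tangent to that direction; I take the descending branch.

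Next I show that $\gamma$, followed into the interior, reaches $(u_0,0)$, passes through it, and exits to the lower boundary at a point $(v_-,-\eps)$ with $v_-<u_0-\eps$. In the upper half I trap $\gamma$ in an invariant wedge bounded on the right by the curve $D_+=0$ (slope $1-2\vk_+x_2+O(x_2^2)$ by Proposition~\ref{230701}) and on the left by a short vertical segment below the saddle; the slope formula~\eqref{210902} combined with the sign information for $D_\pm$ from Proposition~\ref{230701} shows that the vector field is transverse-inward on the lateral boundary, and since the only stationary points of~\eqref{180902} in a small neighborhood are the node and the two saddles $(u_\pm,\pm\eps)$, the curve $\gamma$ must approach $(u_0,0)$. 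Because $J(u_0,0)$ is a nonzero multiple of the identity, the dicritical behavior passes $\gamma$ straight through with a well-defined radial slope close to $1$; a symmetric trapping argument in the lower half-strip, using $D_-=0$ and the lower saddle $(u_-,-\eps)$, produces the continuation to $(v_-,-\eps)$. Combining the asymptotic $u_+=u_0+\eps+\vk_+\eps^2+O(\eps^3)$ of Corollary~\ref{230705} with the slope correction $-\tfrac{4}{3}\vk_+\eps$ at the saddle yields $v_-=u_0-\eps-c\eps^2+O(\eps^3)$ for some $c>0$ depending on $\vk_+>0$, hence the strict inequality $v_-<u_0-\eps$.

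Shifting coordinates back, the glued curve becomes a $C^1$ function $T=T\ti{L}$ on $[v_-+\eps,u_++\eps]$ with the three interpolation values stated in the proposition, and by construction $D_\pm\ut{L}>0$ in the interior of the spine (each vanishes only at one of the endpoints on $x_2=\pm\eps$). The equation~\eqref{040615} then gives $|T'|<1$ strictly inside, so the upper and lower ribs are well defined and $A\ti{L}$ from~\eqref{040614} is smooth; formula~\eqref{220501} produces a diagonally concave function on $\Om{SW}(v_-,u_+)$ by the characterization~\eqref{040612}--\eqref{040613} of Section~\ref{Sec4}, which is the Bellman candidate sought. For the matching, the strict positivity of $D_\pm(u,\eps)$ for $u$ just to the right of $u_+$ (simple-zero argument from Section~\ref{Sec5} combined with continuity of $D_-$) gives conditions~\eqref{190501} on $(u_+,u_++\delta)$ for some $\delta=\delta(\eps)>0$, and Proposition~\ref{200505} delivers the simple right candidate on $\Om{R}(u_+,u_++\delta)$; symmetrically Proposition~\ref{200506} yields the left candidate on $\Om{L}(v_--\delta,v_-)$. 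The rightmost rib of the fissure is the extremal segment in direction $(1,1)$ from $(u_++\eps,\eps)$ to $(u_+-\eps,-\eps)$, which is simultaneously the leftmost right-extremal of $\Om{R}(u_+,u_++\delta)$; both candidates are linear along it and share both endpoint values, and $C^1$-matching of the transverse derivatives follows from comparing formulas~\eqref{260501}--\eqref{020609} with the derivatives of~\eqref{180501} at $(u_+,\eps)$, the identity $D_+(u_+,\eps)=0$ forcing the gradients to coincide. The match at $(v_-+\eps,-\eps)$ with $\Om{L}$ is symmetric.

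The hardest step is the global connection in the second paragraph: constructing the invariant wedge and verifying that the saddle separatrix is forced into the node and passes through it with the correct lower-boundary endpoint, with all curvature, sign, and transversality estimates uniform in small $\eps$. Once this topological ingredient is in hand, the diagonal-concavity check and the $C^1$-gluing with the adjacent simple foliations are routine consequences of the machinery developed in Sections~\ref{Sec2}--\ref{Sec4}.
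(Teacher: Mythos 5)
Your overall strategy coincides with the paper's: realize the spine as the separatrix entering the saddle $(u_+,\eps)$, use the slope comparison with the curve $D_+=0$ to locate it on the $\{D_+>0\}$ side, follow it to the node, continue to the lower boundary, and glue with simple foliations via conditions~\eqref{190501} and~\eqref{200502}. However, several steps are either wrong or have genuine gaps.

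The most serious gap is the passage through the node. You need $|T'|<1$ strictly at the crossing of the middle line, i.e.\ the slope of $\ell_\eps$ at $(u_0,0)$ must be strictly less than $1$; but you only assert the curve ``passes straight through with a well-defined radial slope close to 1.'' Since $(u_0,0)$ is a dicritical node, integral curves arrive with every radial slope, and the limit of the slope of $\ell_\eps$ could a priori equal $1$. The paper rules this out by a second-order Taylor computation: the unique integral curve through the node tangent to slope $1$ satisfies $x_1=u_0+x_2-\vk_+x_2^2+O(x_2^3)$, whereas $D_+=0$ is $x_1=u_0+x_2+\vk_+x_2^2+O(x_2^3)$; since $\vk_+>0$ the unit-slope curve lies in $\{D_+<0\}$ while $\ell_\eps$ lies in $\{D_+>0\}$, so they differ, forcing the slope of $\ell_\eps$ at the node to be strictly below $1$. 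Also, your remark that $D_\pm\ut{L}>0$ in the interior ``by construction'' does not apply at the node itself, where $D_\pm$ is undefined ($T=0$).

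The lower-half argument is misdirected. You invoke ``$D_-=0$ and the lower saddle $(u_-,-\eps)$,'' but since $D_- - D_+ = f_+''-f_-''>0$ by~\eqref{130201}, the binding constraint in \emph{both} halves is $D_+\ge 0$, and the paper works again with $D_+=0$, whose slope is $>1$ in the lower half (Remark~\ref{rem150901}). Moreover the lower endpoint $(v_-,-\eps)$ is a generic nonstationary point, not the saddle $(u_-,-\eps)$; your parenthetical ``each vanishes only at one of the endpoints on $x_2=\pm\eps$'' is false — $D_-$ never vanishes on the spine, and $D_+$ vanishes only at $(u_+,\eps)$. Related to this, the asymptotic $v_-=u_0-\eps-c\eps^2+O(\eps^3)$ is asserted without proof; the paper instead gets $v_-<u_0-\eps$ immediately from the fact that the slope of $\ell_\eps$ stays strictly in $(0,1)$, and separately shows $v_--u_0=O(\eps)$ by comparison with a fixed reference curve.

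Finally, your wedge is described with the sides swapped: below $(u_+,\eps)$ (which lies on $D_+=0$), the curve $D_+=0$ is to the \emph{left} of the vertical line $x_1=u_+$, and the region $\{D_+>0\}$ containing the spine lies to the right of $D_+=0$, between it and $x_1=u_+$. This is a presentation slip rather than a conceptual error, but the inward-transversality claim needs to be checked for the backward flow (since $\dot x_1>0$ in~$\omega$, following $\ell_\eps$ from the saddle toward the node is backward in the parametrization), which your description does not make explicit.
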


The signs of the second and third derivatives determine the direction of the corresponding fissure. Thus, reflection with respect to the line $x_1=u_0$ gives us a SE-fissure.

\begin{Prop}
\label{200102}
Let $f_+'(u_0)=f_-'(u_0)$\textup, $f_+''(u_0)>f_-''(u_0)$\textup, $f_+'''(u_0)<0$\textup, and let  $u_+=u_+\ut{R}$ be the root of the equation~\eqref{201202} described in Corollary~\textup{\ref{230705}}. Then for sufficiently small $\eps,$ there exists a~\textup{SE}-fissure with a spine function $T=T\ti{R}$ defined on some interval
$[v_- -\eps, u_+ -\eps]$ such that 
\begin{itemize}
\item
$T(u_0-\eps)=0$\textup, \ $T(u_+\!-\eps)=\eps$\textup;
\item
$v_-\!=v_-\ut{R}$\textup, \;$v_-\!\!>u_0+\eps$\textup, solves the equation  
$T(v_-\!-\eps)=-\eps$\textup;
\item
Formula~\eqref{220501} with the function $A=A\ti{R}$ given by~\eqref{060604} 
defines a Bellman candidate on~$\Om{SE}(u_+,v_-)$\textup, foliated by this fissure\textup;
\item
There exists a $\delta=\delta(\eps)$\textup, such that the standard Bellman candidates on  
$\Om{R}(u_+\!-\delta,u_+)$ and $\Om{L}(v_-\!-\delta,v_-)$ \textup(see~\eqref{180501} and~\eqref{200501}\textup) 
together with the described candidate on $\Om{SE}(u_+,v_-)$ 
form a $C^1$\!-smooth Bellman candidate on the union of these three regions.
\end{itemize}
\end{Prop}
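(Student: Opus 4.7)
The plan is to reduce Proposition~\ref{200102} to Proposition~\ref{200101} by the reflection $\sigma\colon(x_1,x_2)\mapsto(2u_0-x_1,x_2)$ about the vertical line $x_1=u_0$. Under $\sigma$, right extremal segments $x_1-x_2=\const$ are mapped to left extremal segments $y_1+y_2=\const$, so right herringbones and SE-fissures become left herringbones and SW-fissures, respectively, and the regions $\Om{R}(a,b)$ and $\Om{L}(a,b)$ are interchanged via $(a,b)\mapsto(2u_0-b,2u_0-a)$.

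Set $\tilde f_\pm(t)\df f_\pm(2u_0-t)$. A direct differentiation gives $\tilde f_\pm^{(k)}(u_0)=(-1)^k f_\pm^{(k)}(u_0)$, so the hypotheses $f_+'(u_0)=f_-'(u_0)$, $f_+''(u_0)>f_-''(u_0)$, $f_+'''(u_0)<0$ turn into the hypotheses $\tilde f_+'(u_0)=\tilde f_-'(u_0)$, $\tilde f_+''(u_0)>\tilde f_-''(u_0)$, $\tilde f_+'''(u_0)>0$ of Proposition~\ref{200101} for $\tilde f_\pm$. I would apply that proposition to obtain a spine $\tilde T$ on $[\tilde v_-+\eps,\tilde u_++\eps]$ and a Bellman candidate $\tilde B$ defined by \eqref{220501} and~\eqref{040614} on $\Om{SW}(\tilde v_-,\tilde u_+)$, with matching simple candidates on the two adjacent regions. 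Then I define $T\ti{R}(u)\df\tilde T(2u_0-u)$, $B\df\tilde B\circ\sigma$, $u_+\ut{R}\df 2u_0-\tilde u_+$, and $v_-\ut{R}\df 2u_0-\tilde v_-$. Using the expansion of $\tilde u_+$ from Corollary~\ref{230705} applied to $\tilde f_\pm$, together with $\vk_+\ut{R}=-\tilde\vk_+\ut{L}$, yields the expansion~\eqref{181201} for $u_+\ut{R}$; the inequality $v_-\ut{R}>u_0+\eps$ follows from $\tilde v_-<u_0-\eps$.

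The remaining work is to translate every formula through $\sigma$. Substituting $\tilde f_\pm$ into~\eqref{040614} and~\eqref{040615} and applying the sign-change rule for first derivatives summarized at the end of Section~\ref{Sec4} converts these identities into~\eqref{060604} for $A\ti{R}$ and into the right-herringbone equation~\eqref{060612} for $T\ti{R}$. Diagonal concavity of $B$ on $\Om{SE}(u_+\ut{R},v_-\ut{R})$, together with the $C^1$-smooth matching along the seams of this region with the adjoining simple-foliation regions, is inherited from the corresponding properties of $\tilde B$, because $\sigma$ is an involutive isometry that interchanges the two diagonal directions.

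The main obstacle is purely bookkeeping: one has to verify a complete dictionary between the objects of Proposition~\ref{200101} for $\tilde f_\pm$ and their $\sigma$-images for $f_\pm$ --- notably $D_\pm\ut{L}\leftrightarrow D_\pm\ut{R}$, the saddles $(\tilde u_\pm,\pm\eps)\leftrightarrow(u_\pm\ut{R},\pm\eps)$, and the separatrices of these saddles that serve as the spine ends --- and to confirm that the identification of the two adjoining simple-foliation regions is consistent with the statement of the proposition under the substitution $\sigma(\Om{R}(a,b))=\Om{L}(2u_0-b,2u_0-a)$. Once this dictionary is in place, the conclusion of Proposition~\ref{200102} is immediate.
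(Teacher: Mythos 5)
Your proposal is correct and is exactly the approach the paper intends: the text immediately before Proposition~\ref{200102} states that ``reflection with respect to the line $x_1=u_0$ gives us a SE-fissure,'' which is precisely the conjugation by $\sigma$ and the substitution $\tilde f_\pm(t)=f_\pm(2u_0-t)$ that you carry out. You have simply filled in the bookkeeping that the paper leaves implicit.
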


The reflection with respect to the line $x_2=0$ swaps $f_+$ and $f_-$. This leads to the following statements.

\begin{Prop}
\label{200103}
Let $f_+'(u_0)=f_-'(u_0)$\textup, $f_+''(u_0)<f_-''(u_0)$\textup, $f_-'''(u_0)>0$\textup, and let  $u_-\!=u_-\ut{L}$ be the root of the equation~\eqref{230708} described in Corollary~\textup{\ref{230705}}. 
Then for sufficiently small $\eps,$ there exists a~\textup{NW}-fissure with a spine function $T=T\ti{L}$ defined on some interval $[v_+ +\eps, u_- +\eps]$ such that
\begin{itemize}
\item
$T(u_0+\eps)=0$\textup, \ $T(u_-\!+\eps)=-\eps$\textup;
\item
$v_+=v_+\ut{L}$\textup, \;$v_+\!<u_0-\eps$\textup, solves the equation 
$T(v_+\!+\eps)=\eps$\textup;
\item
Formula~\eqref{220501} with the function $A=A\ti{L}$ given by~\eqref{040614} defines a Bellman candidate on~$\Om{NW}(v_+,u_-)$ foliated by this fissure\textup;
\item
There exists a $\delta=\delta(\eps)$\textup, such that the standard Bellman candidates on  
$\Om{R}(u_-,u_-\!+\delta)$ and $\Om{L}(v_+\!-\delta,v_+)$ \textup(see~\eqref{180501} and~\eqref{200501}\textup)
together with the described candidate on $\Om{NW}(v_+,u_-)$ form a $C^1$\!-smooth Bellman candidate on the union of these three regions.
\end{itemize}
\end{Prop}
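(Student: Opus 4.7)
The plan is to derive Proposition~\ref{200103} directly from Proposition~\ref{200101} by the vertical reflection $\sigma\colon(x_1,x_2)\mapsto(x_1,-x_2)$ across the line $x_2=0$. This involution preserves the strip $\Oe$, swaps the two boundaries, and exchanges the diagonal directions $(1,1)$ and $(1,-1)$; hence if $B$ is a Bellman candidate on $\Oe$ for boundary data $(f_+,f_-)$, then $B\circ\sigma$ is one for $(f_-,f_+)$. Moreover $\sigma$ sends left herringbones to left herringbones (it preserves the $x_1$-axis), \textup{SW}-fissures to \textup{NW}-fissures, and interchanges the simple foliations $\Om{R}(u_1,u_2)$ and $\Om{L}(u_1,u_2)$ (because it maps the segment $x_1-x_2=u$ to $x_1+x_2=u$).

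Concretely, I would set $\tilde f_\pm\df f_\mp$ and verify the hypotheses of Proposition~\ref{200101} for $\tilde f_\pm$: the equality $\tilde f_+'(u_0)=\tilde f_-'(u_0)$ is immediate, $\tilde f_+''(u_0)>\tilde f_-''(u_0)$ is the second-derivative hypothesis of Proposition~\ref{200103}, and $\tilde f_+'''(u_0)=f_-'''(u_0)>0$ is the third-derivative hypothesis. Proposition~\ref{200101} then yields, for all sufficiently small $\eps$, a \textup{SW}-fissure with spine $\tilde T$ on $[\tilde v_-+\eps,\tilde u_++\eps]$ satisfying $\tilde T(u_0+\eps)=0$, $\tilde T(\tilde u_++\eps)=\eps$, $\tilde T(\tilde v_-+\eps)=-\eps$, an $A$-function $\tilde A$ given by~\eqref{040614}, a Bellman candidate $\tilde B$ on $\Om{SW}(\tilde v_-,\tilde u_+)$ built via~\eqref{220501}, and matching simple-foliation candidates on both sides forming a $C^1$-smooth whole. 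I would then pull back by $\sigma$: define $T\df-\tilde T$, $B(x_1,x_2)\df\tilde B(x_1,-x_2)$, $u_-\df\tilde u_+$, $v_+\df\tilde v_-$. The boundary values $T(u_0+\eps)=0$, $T(v_++\eps)=\eps$, $T(u_-+\eps)=-\eps$ are then immediate, and $v_+<u_0-\eps$ follows from $\tilde v_-<u_0-\eps$.

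The required covariance amounts to checking that the spine ODE~\eqref{040615}, the algebraic identity~\eqref{040614}, and the piecewise formula~\eqref{220501} are invariant under the joint substitution $T\mapsto-T$, $f_+\leftrightarrow f_-$. For~\eqref{040615} the key observation is that the quantities $D_+\ut{L}$ and $D_-\ut{L}$ swap under that substitution (since $f_+'-f_-'$ changes sign while the denominator $2T$ also changes sign), after which the right-hand side of~\eqref{040615} merely changes sign, matching the left-hand side since $(-\tilde T)'=-\tilde T'$. The invariance of~\eqref{040614} and~\eqref{220501} is a direct algebraic check. The identification $u_-\ut{L}=\tilde u_+\ut{L}$ follows by observing that the defining equation~\eqref{230708} for $u_-\ut{L}$ in terms of $f_\pm$ becomes precisely~\eqref{230706} for $\tilde u_+\ut{L}$ in terms of $\tilde f_\pm$, after which Corollary~\ref{230705} gives the asymptotic location $u_-\in(u_0,u_0+2\eps)$.

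The step requiring the most careful bookkeeping is matching the two adjacent simple-foliation regions. Since $\sigma$ sends an $\Om{R}$-region to an $\Om{L}$-region of the same horizontal parameters (and vice versa), the right simple foliation adjacent to the \textup{SW}-fissure for $\tilde B$ becomes a left simple foliation adjacent to the \textup{NW}-fissure for $B$; the labels must be translated in accordance with this exchange when translating into the form of Proposition~\ref{200103}. Once the correspondence is tracked carefully, the $C^1$-smoothness of the glued candidate transfers from $\tilde B$ to $B$ automatically, because $\sigma$ is a smooth involution whose differential $\mathrm{diag}(1,-1)$ preserves continuity of the gradient across every reflected edge. This completes the reduction to Proposition~\ref{200101}.
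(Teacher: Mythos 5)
Your reflection argument is exactly the paper's intended proof: the text immediately preceding Proposition~\ref{200103} says only that the reflection with respect to $x_2=0$ swaps $f_+$ and $f_-$, and you fill in the omitted covariance checks for the spine ODE~\eqref{040615}, the formula~\eqref{040614}, and the candidate formula~\eqref{220501} under $T\mapsto-T$, $f_+\leftrightarrow f_-$, as well as the identification of $u_-\ut{L}$ via~\eqref{230708}. One point should be stated outright rather than hidden behind the phrase ``the labels must be translated'': the reflection of Proposition~\ref{200101} produces $\Om{L}(u_-,u_-+\delta)$ to the \emph{right} of the \textup{NW}-fissure and $\Om{R}(v_+-\delta,v_+)$ to the \emph{left}, the opposite of what Proposition~\ref{200103} prints. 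That conclusion is the correct one: the \textup{NW}-fissure region is bounded on the right by the left-type segment $x_1+x_2=u_-$ and on the left by the right-type segment $x_1-x_2=v_+$, and the global-foliation formula near the end of the paper places an $\Om{R}$-region to the left and an $\Om{L}$-region to the right of the \textup{NW}-piece. So the $\Om{R}/\Om{L}$ labels in the stated proposition are a typo that your computation has exposed, and your write-up should say so explicitly instead of gesturing at a translation of labels.
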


\begin{Prop}
\label{200104}
Let $f_+'(u_0)=f_-'(u_0)$\textup, $f_+''(u_0)<f_-''(u_0)$\textup, $f_+'''(u_0)<0$\textup, and let
 $u_-=u\ut{R}_-$ be the root of the equation~\eqref{201203} described in Corollary~\textup{\ref{230705}}. 
 Then for sufficiently small $\eps,$ there exists a~\textup{NE}-fissure with a spine function $T=T\ti{R}$ defined on some interval $[v_+ -\eps, u_- -\eps]$ such that
\begin{itemize}
\item
$T(u_0-\eps)=0$\textup, \ $T(u_-\!-\eps)=-\eps$\textup;
\item
$v_+=v_+\ut{R}$\textup, \;$v_+\!>u_0+\eps$ solves the equation  
$T(v_+\!-\eps)=\eps$\textup;
\item
Formula~\eqref{220501} with the function $A=A\ti{R}$ given by~\eqref{060604} defines a Bellman candidate on~$\Om{NE}(u_-,v_+)$ foliated by this fissure\textup;
\item
There exists a $\delta=\delta(\eps)$\textup, such that the standard Bellman candidates on 
$\Om{R}(u_-\!-\delta,u_-)$ and $\Om{L}(v_+,v_-\!+\delta)$ \textup(see~\eqref{180501} and~\eqref{200501}\textup)
together with the described candidate on $\Om{NE}(u_+,v_-)$ form a $C^1$\!-smooth Bellman candidate on the union of these three regions.
\end{itemize}
\end{Prop}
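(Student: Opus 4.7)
The fourth fissure case mirrors the three already treated, and the plan is to establish it either by a reflection reduction to one of them or by repeating the direct construction. The cleanest reduction is to Proposition~\ref{200102} (the SE case) via $(x_1,x_2)\mapsto(x_1,-x_2)$, which swaps $f_+$ and $f_-$ and interchanges the upper and lower boundaries of $\Oe$; it preserves the class of right horizontal herringbones (the word ``right'' refers to the direction of extension) and converts a fissure starting from the bottom into one starting from the top. Applied with $(\tilde f_+,\tilde f_-) := (f_-,f_+)$, it carries the hypotheses of Proposition~\ref{200102} onto those of Proposition~\ref{200104}, so the existence of the spine, the formula~\eqref{060604} for $A\ti{R}$, the identification of $v_+$, and the $C^1$ matching all transfer after the substitutions $T\mapsto -T$, $u_+\leftrightarrow u_-$, $v_-\leftrightarrow v_+$.

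If one wants to argue directly, the steps are as follows. First, apply the right-herringbone version of the analysis of Section~\ref{Sec5} to the vector field~\eqref{060612}: for small $\eps$ this field has a dicritical node at $(u_0,0)$ and two saddles corresponding to the values $u_\pm\ut{R}$, all within $O(\eps)$ of each other and concentrated near $x_1=u_0-\eps$ in the original coordinates (see~\eqref{181201}). Under $f_+''(u_0)<f_-''(u_0)$ together with the given sign of $f_+'''(u_0)$, formula~\eqref{230704} fixes the signs of $\vk_\pm$ and hence the orientation of the stable and unstable eigendirections at the saddle $(u_-\ut{R},-\eps)$, in symmetry with the picture described in Figures~\ref{101201} and~\ref{141208}.

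Second, select the integral curve of~\eqref{060612} leaving this saddle along the eigendirection whose eigenvalue tends to $-1$ as $\eps\to 0$. Because the node is dicritical, the trajectory may be prolonged through it and continued until it meets $x_2=\eps$ at a point corresponding to some $v_+>u_0+\eps$. The monotonicity of the slope field in $\eps$ (Proposition~\ref{210901}), combined with the quadratic description of the curves $D_\pm=0$ from Proposition~\ref{230701}, keeps the trajectory inside the strip and yields the expected asymptotic $v_+=u_0+\eps+O(\eps^2)$. Defining $T\ti{R}$ as the graph of this trajectory, $A\ti{R}$ by~\eqref{060604}, and $B$ by~\eqref{220501}, one obtains a diagonally concave $C^1$-smooth Bellman candidate on $\Om{NE}(u_-,v_+)$: the inequalities $D_\pm\ge 0$ required in Section~\ref{Sec4} hold along any integral curve of~\eqref{060612}.

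The main obstacle is this trajectory-tracking step: proving that the chosen integral curve actually reaches the opposite boundary without escape, and that at each endpoint the spine meets the boundary tangentially to the relevant curve $D_\pm=0$. The tangency at the saddle is governed by the comparison between the saddle eigendirection slope (the right-herringbone analogue of~\eqref{220101}) and the slope of $D_\pm=0$ recorded in Remark~\ref{rem150901}; once tangency is established, the $C^1$-gluing to the simple R-foliation on $\Om{R}(u_-\!-\delta,u_-)$ and the simple L-foliation on $\Om{L}(v_+,v_+\!+\delta)$ follows from the same gradient computation used in Section~\ref{Sec4} to prove $C^1$-smoothness of $B$ across the spine itself.
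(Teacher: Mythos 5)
Your primary reduction via the reflection $(x_1,x_2)\mapsto(x_1,-x_2)$ (which swaps $f_+$ and $f_-$) applied to the SE-fissure case is exactly the route the paper takes; the paper proves only the SW case directly and obtains the other three fissure propositions by the two reflections. One remark: after this substitution the third-derivative hypothesis of Proposition~\ref{200102} becomes $f_-'''(u_0)<0$, not $f_+'''(u_0)<0$ as written in the statement of Proposition~\ref{200104} --- this is a typo in the statement (consistent with the standing assumption in Section~\ref{Fissures} that $f_-'''(u_0)\ne 0$ whenever $f_+''(u_0)<f_-''(u_0)$), and your reduction implicitly corrects it, though you do not flag the discrepancy.
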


We will consider only the case of an SW-fissure in a neighborhood of $u_0$, where $f_+'(u_0)=f_-'(u_0)$,
$f_+''(u_0)>f_-'(u_0)$, and $f_+'''(u_0)>0$. To do this, we need to solve the equation~\eqref{040615}.
We seek a solution $T$ on some interval $[v_-\!+\eps,u_+\!+\eps]$ with boundary conditions 
$T(v_-\!+\eps)=-\eps$, \hbox{$T(u_+\!+\eps)=\eps$} and satisfying the convexity conditions~\eqref{040616}.

\begin{proof}[Proof of Proposition~\textup{\ref{200101}}]

Investigating the vector field~\eqref{180901} in the preceding section, we claimed that there exists
exactly one integral curve of this field connecting the node $(u_0,0)$ to the saddle point $(u_+,\eps)$
on the upper boundary. Now we will prove this statement and describe more detailed properties
of this curve.

As we recall, the field~\eqref{180901} is horizontally shifted by $\eps$ compared to the field generated by the equation~\eqref{040615}. The mentioned integral curve
will give us the upper half of the herringbone between the points $(u_0+\eps,0)$ and $(u_+\!+\eps,\eps)$. To obtain the lower half of the herringbone, we must take the unique integral curve starting at the same node, but directed towards the lower boundary, such that together with the upper half they form a smooth $C^1$ curve in the natural parametrization (see one of the bold curves in Figure~\ref{141208}). 
We only need to verify that the convexity conditions~\eqref{040616} are satisfied. They ensure that 
\begin{itemize}
\item the slope of the herringbone spine does not exceed $1$, i.\,e., we can construct the desired foliation (see Figure~\ref{181203});
\item the corresponding function is diagonally concave.
\end{itemize}

Let $\ell_\eps$ denote the integral curve of the field~\eqref{180902} originating at  $(u_+,\eps)$ with a positive slope. We know that such a curve exists and is unique. Moreover, its slope at $(u_+,\eps)$ is described by~\eqref{220101}:
\eq{210904}{
1-\tfrac43\vk_+\eps+O(\eps^2)\,.
}

The imposed conditions on the derivatives of the functions $f_\pm$ ensure $\vk_+>0$. From~\eqref{240701} we see that the slope of the curve $D_+=0$ at $(u_+,\eps)$ is $1-2\vk_+\eps+O(\eps^2)$, i.\,e., is less than the slope of the curve $\ell_\eps$. As noted in Proposition~\ref{230701}, the region where $D_+>0$ lies to the right of the curve $D_+=0$ in the upper half of the strip and to the left of this curve in the lower half of the strip (see Figure~\ref{220102}).
\begin{figure}[h]
    \centering
    \includegraphics[scale = 0.5]{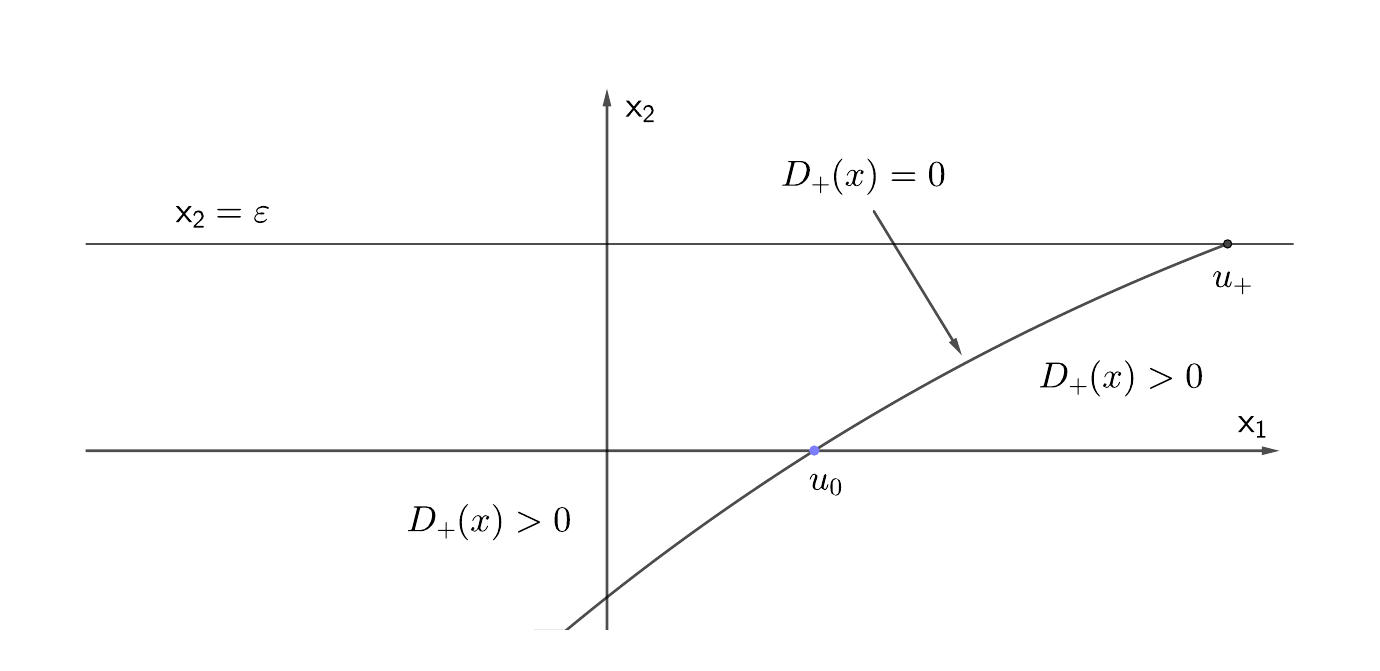}
    \caption{Regions where $D_+>0$.}
    \label{220102}
\end{figure}

Consider the open subdomain $\omega$ of the upper half of the strip bounded by the curve $D_+=0$, the middle line $x_2=0$, and the vertical line $x_1=u_+$. We have  $D_+> 0$ on $\omega$. As we explained, the curve $\ell_\eps$ originating at the saddle point $(u_+,\eps)$, in a neighborhood of this point lies below the curve $D_+=0$, so it enters $\omega$. We need to prove that it crosses the boundary of $\omega$ again at the node $(u_0,0)$. 

Note that due to the assumption $f_+''(u_0)>f_-''(u_0)$, the inequality $f_+'' > f_-''$ is satisfied in some neighborhood of $(u_0,0)$, thus for sufficiently small $\eps$, it holds in $\omega$ as well. The inequality $f_+''>f_-''$ for $x_2>0$ implies $D_->D_+$, hence $D_->0$ is also satisfied in~$\omega$. Then, $\dot x_1>0$ (see~\eqref{180902}). This means that the integral curve of the vector field~\eqref{180902} in~$\omega$ is the graph of some function of the first coordinate. 

The slope of any integral curve of the field~\eqref{180902} is equal to $1$ at the points of intersection with the curve $D_+=0$ (except, possibly, stationary points). The slope of the curve $D_+=0$ in the upper half of the strip is strictly less than $1$, see~\eqref{240701}. Thus the integral curves in the upper half of the strip intersect the curve $D_+=0$ from top to bottom when moving from right to left. Therefore, the integral curve $\ell_\eps$ cannot exit  $\omega$ through the boundary $D_+=0$ in the upper half of the strip. 
Furthermore, two integral curves of the vector field can intersect only at stationary points, 
i.\,e., the middle line and $\ell_\eps$ can only intersect at the node $(u_0,0)$. Thus, we have proved that~$\ell_\eps$ connects two stationary points: the saddle point $(u_+,\eps)$ and the node $(u_0,0)$ {\bf(}see Figure~\ref{250101}{\bf)}.
\begin{figure}[h]
    \centering
    \includegraphics[scale = 0.25]{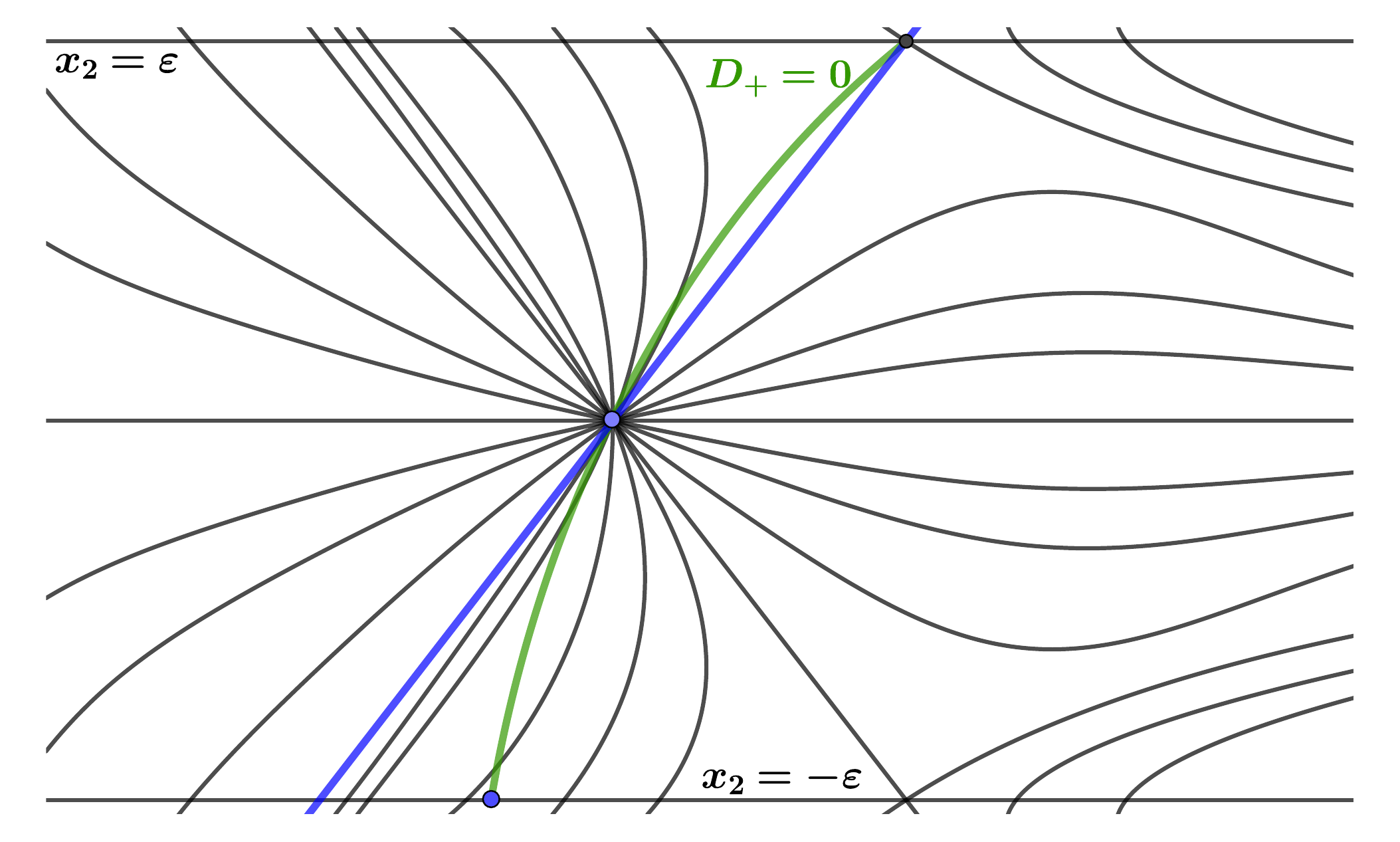}
    \caption{Integral curves and the curve $D_+=0$.}
    \label{250101}
\end{figure}

Fix $\eps>0$ and temporarily consider $\delta\leq\eps$. The slope of the curve $\delta\mapsto(u_+(\delta),\delta)$ is equal to
$$
\frac1{1+2\vk_+\delta+O(\delta^2)}=1-2\vk_+\delta+O(\delta^2)
$$
{\bf(}see~\eqref{230707}{\bf)}, while the slope of the curve $\ell_\eps$ at $(u_+(\eps),\eps)$ is 
$1-\frac43\vk_+\eps+O(\eps^2)$ {\bf(}see~\eqref{210904}{\bf)}, meaning the latter curve is steeper in a neighborhood of this point. Therefore, if $\delta$ is sufficiently close to $\eps$, then $(u_+(\delta),\delta)$ lies above $\ell_\eps$. Consequently, the curve $\ell_\delta$ also lies above $\ell_\eps$ in some neighborhood of this point. Now, suppose that $\ell_\delta$ intersects $\ell_\eps$ at some point. Since it lies above $\ell_\eps$ until the first point of intersection, its slope at the point of intersection must be not less than the slope of~$\ell_\eps$, which is impossible within $\omega$ according to Proposition~\ref{210901}. Due to the continuity of the curve~$\ell_\delta$ with respect to $\delta$, we conclude that for any $\delta<\eps$, $\ell_\delta$ lies to the left (and above) of $\ell_\eps$. Thus, all curves $\ell_\eps$ intersect only at the node $(u_0,0)$.

We have shown that $D_\pm>0$ on any integral curve $\ell_\eps$ with $x_2>0$, thus the slope of $\ell_\eps$ is strictly less than $1$ in the upper half of the strip, and now we need to verify this at the node $(u_0,0)$. In fact, we need to check that $\ell_\eps$ is not the integral curve with the unit slope at the node. For this purpose, we will calculate this integral curve up to second order (in $x_2$) and compare it with the curve $D_+=0$, i.\,e., with~\eqref{230702}.

Let the integral curve we are interested in be defined by the equation $x_1=u_0+x_2+\alpha x_2^2+O(x_2^3)$. Then, from~\eqref{260901} we obtain
$$
\begin{aligned}
\frac{\dot x_1}{x_2}&=\eps\frac{f_+'(u_0+2x_2+\alpha x_2^2)-f_-'(u_0+\alpha x_2^2)}{x_2}
\\
&\quad-(\eps-x_2)f_-''(u_0)-(\eps+x_2)f_+''(u_0+2x_2)+O(x_2^2)
\\
&=\eps\big[(2+\alpha x_2)f_+''(u_0)+2x_2 f_+'''(u_0)-\alpha x_2 f_-''(u_0)\big]\rule{0pt}{18pt}
\\
&\quad-(\eps-x_2)f_-''(u_0)-(\eps+x_2)\big(f_+''(u_0)+2x_2 f_+'''(u_0)\big)+O(x_2^2)
\\
&=\big(f_+''(u_0)-f_-''(u_0)\big)\big(\eps-(1-\alpha\eps)x_2\big)+O(x_2^2)\rule{0pt}{18pt}
\end{aligned}
$$
and
$$
\begin{aligned}
\frac{\dot x_2}{x_2}&=f_-'(u_0)-f_+'(u_0+2x_2)-(\eps-x_2)f_-''(u_0)+(\eps+x_2)f_+''(u_0+2x_2)+O(x_2^2)
\\
&=-2x_2 f_+''(u_0)-(\eps-x_2)f_-''(u_0)+(\eps+x_2)\big(f_+''(u_0)+2x_2 f_+'''(u_0)\big)+O(x_2^2)
\\
&=\big(f_+''(u_0)-f_-''(u_0)\big)\big(\eps-(1-\vk_+\eps)x_2\big)+O(x_2^2)\,.
\end{aligned}
$$
Recall that $\vk_+$ is given by~\eqref{230704}.

For a smooth integral curve $x_1=u_0+x_2+\alpha x_2^2+O(x_2^3)$ we have
$$
\dot x_1=\big[1+2\alpha x_2+O(x_2^2)\big]\dot x_2\,,
$$
thus
$$
(1+2\alpha x_2)\big(\eps-(1-\vk_+\eps)x_2\big)=\eps-(1-\alpha\eps)x_2+O(x_2^2)\,
$$
which implies $\alpha=-\vk_+$.

Comparing the obtained equation of the investigated integral curve $x_1=u_0+x_2-\vk_+ x_2^2+O(x_2^3)$ with the equation of the curve $D_+=0$ {\bf(}i.\,e., with~\eqref{230702}: $x_1=u_0+x_2+\vk_+ x_2^2+O(x_2^3)${\bf)}, we see that this integral curve goes to the left (above) of the curve $D_+=0$, i.\,e., in the domain where $D_+<0$, and therefore cannot coincide with $\ell_\eps$.

We conclude that the slope of $\ell_\eps$ at $(u_0,0)$ is strictly less than $1$. To continue it to the lower half of the strip, we choose an integral curve starting from $(u_0,0)$ to the left with the same slope. Thus, we obtain a $C^1$-smooth curve $\ell_\eps$ in the entire strip.

The slope of the curve $D_+=0$ in the lower half of the strip is strictly greater than $1$, see Remark~\ref{rem150901}. Reasoning similar to that given for the upper half of the strip shows that the curve~$\ell_\eps$ in the lower half of the strip has no intersections with the curve $D_+=0$ and lies in the domain where $D_+>0$.

We have proved that $D_+\ge 0$ at all points of the curve $\ell_\eps$. The second condition ($D_-\ge 0$) is automatically satisfied because
\eq{130201}{
D_- - D_+ = f_+'' - f_-'' = f_+''(u_0) - f_-''(u_0) + O(\eps) > 0
}
for $\eps$ sufficiently small.

Now, we prove that for small $\eps$ there exists a point of intersection of the curve $\ell_\eps$ with the lower boundary of the strip. We denote the first coordinate of this point by $v_-$. We will check that $v_--u_0=O(\eps)$.

Fix a $\delta>0$ and consider the curve $\ell_\delta$ in the domain $x_2<0$. Fix a $\eta>0$ at which this curve intersects the line $x_2=-\eta$ and consider $\eps\le\eta$. For the same reasons as in the upper half of the strip, the curves $\ell_\eps$ do not intersect. The curve with a larger $\eps$ is located above the curve with a smaller $\eps$. Therefore, for all $\eps\le\eta$, there exists a point $(v_-(\eps),-\eps)$ between $(u_0,-\eps)$ and the point of intersection of the curve $\ell_\delta$ with $x_2=-\eps$. Since the slope of $\ell_\delta$ at $(u_0,0)$ is strictly between $0$ and $1$, the first coordinate of a point $x$ on $\ell_\delta$ is $O(x_2)$, and consequently, $v_--u_0=O(\eps)$.

Thus, we have constructed a SW-fissure $T$ in the domain $\Omega_{\text{SW}}(v_-,u_+)$. The point $u_+$ is the root of the equation~\eqref{230706}. To construct the required domain $\Omega_{\text{R}}(u_+,u_++\delta)$, it is sufficient to satisfy the conditions~\eqref{190501}. Due to~\eqref{130201}, we only need to check the first condition:
\eq{130202}{
f_+'(u+\eps) - f_-'(u-\eps) - 2\eps f_+''(u+\eps) \ge 0\,
}
if $u \in (u_+,u_++\delta)$ for some positive $\delta$. Since the expression in~\eqref{130202} is $0$ when $u=u_+$ by definition of $u_+$ (see~\eqref{230706}), and its derivative
$$
f_+''(u+\eps) - f_-''(u-\eps) - 2\eps f_+'''(u+\eps) = f_+''(u_0) - f_-''(u_0) + O(\eps)
$$
is strictly positive for small $\eps$, we conclude that~\eqref{130202} holds for some $\delta=\delta(\eps)$ if $\eps$ is sufficiently small.

Unfortunately, we cannot say anything similar about $v_-$. But since the slope of the curve $\ell_\eps$ inside the strip lies strictly between $0$ and $1$, the inequality
\eq{240101}{
v_-<u_0-\eps
}
is always true. This is sufficient to construct the simple left foliation on some domain $\Omega_{\text{L}}(v_--\delta,v_-)$ to the left of the fissure. To do this, we need to check that the conditions~\eqref{200502} are satisfied in some left neighborhood of~$v_-$. Again, due to~\eqref{130201}, it suffices to check only the first condition, i.\,e.,
\eq{130203}{
f_-'(u+\eps) - f_+'(u-\eps) - 2\eps f_+''(u-\eps) \ge 0\,,
}
if $u \in (v_--\delta,v_-)$ for some $\delta$, $\delta=O(\eps)$. Expand the expression~\eqref{130203} at $u_0$:
\eq{140201}{
f_-'(u+\eps) - f_+'(u-\eps) - 2\eps f_+''(u-\eps) = -(u+\eps-u_0)[f_+''(u_0)-f_-''(u_0)]+O(\eps^2)\,.
}
Since for $u \in (v_--\delta,v_-)$ we have $u+\eps-u_0<v_-+\eps-u_0<0$,
the expression in~\eqref{140201} is strictly positive for sufficiently small $\eps$. 
\end{proof}

\section{Examples. Polynomials of third degree}

In this section, we partially consider the case when the boundary functions $f_\pm$ are third-degree polynomials. We do not consider all possible polynomials since we have not described all foliations that can arise for third-degree polynomials. However, the considered foliations allow us to describe all cases when $\eps$ is sufficiently small. Other possible foliations will be considered in the forthcoming papers.

Let
$$
f_\pm(t)=a_3^\pm t^3+a_2^\pm t^2+a_1^\pm t+a_0^\pm.
$$
We start with the case $a_3^+=a_3^-=a_3\neq0$. The case $a_3^+=a_3^-=0$ is considered in Section~\ref{230201}.
The symmetry with respect to the $x_2$-axis allows us to consider only the case $a_3>0$ (otherwise, replace~$t$ with~$-t$).
Using symmetry with respect to the $x_1$-axis, we can assume $a_2^+\geq a_2^-$ (otherwise, swap~$f_+$ and~$f_-$).

First, consider the simplest case $a_2^+=a_2^-$. If $a_1^+=a_1^-$, then $f_+-f_-$ is a constant, and, therefore, the foliation is the same as in the symmetric case; we have a horizontal herringbone, the spine of which is the $x_1$-axis (see~\cite{SymStr}). Since we assumed $a_3>0$, it will be the left herringbone. In the case  $a_3<0$, of course, it will be a symmetric right herringbone. Thus, we further assume $a_1^+\neq a_1^-$.

Under this assumption, we can construct a simple foliation for small $\eps$. Indeed, by direct calculation, condition~\eqref{190501} transforms into
\eq{120202}{
\begin{gathered}
(a_1^+-a_1^-)-12a_3\eps^2\geq0\,;
\\
(a_1^+-a_1^-)+12a_3\eps^2\geq0\,,
\end{gathered}
}
and condition~\eqref{200502} takes the form
\eq{120203}{
\begin{gathered}
(a_1^--a_1^+)+12a_3\eps^2\geq0\,;
\\
(a_1^--a_1^+)-12a_3\eps^2\geq0\,.
\end{gathered}
}

For small $\eps$, condition~\eqref{120202} is satisfied if $a_1^+>a_1^-$. More precisely,~\eqref{120202} is satisfied for $\eps\leq\eps_0$, where
\eq{120204}{
\eps_0\df\sqrt{\frac{|a_1^+-a_1^-|}{12a_3}}.
} 
Thus, for such $\eps$ we have the right simple foliation $\Om{R}(-\infty,+\infty)$. If $a_1^+<a_1^-$ and $\eps\leq\eps_0$, then condition~\eqref{120203} is satisfied and we have the left simple foliation $\Om{L}(-\infty,+\infty)$.

For $\eps>\eps_0$, we construct an infinite left herringbone, the spine of which is the line
$$
x_2=\frac{a_1^+-a_1^-}{12a_3\eps}\,.
$$
We consider the case $a_1^+>a_1^-$, and the opposite case can be obtained by swapping $f_+$ and $f_-$.

In fact, we can explicitly solve equation~\eqref{040615} since it takes the following simple form:
\eq{120205}{
T'=\frac{T(\eps T-\eps_0^2)}{\eps\eps_0^2-T^3}\,.
}
All solutions of this equation are represented by the family
\eq{120206}{
u=-\frac{T^2}{2\eps}-\frac{\eps_0^2}{\eps^2}T-\eps\log|T|+
\frac{\eps^4-\eps_0^4}{\eps^3}\log\big|T-\frac{\eps_0^2}{\eps}\big|+\const
}
and two special solutions
$$
T=0\qquad\text{and}\qquad T=\frac{\eps_0^2}{\eps}\,.
$$

We know that a herringbone with $T=0$ is impossible (see Remark~\ref{120201}), some of the herringbones with spines given by~\eqref{120206} generate diagonally concave functions, but the minimal diagonally concave function is generated by the spine $T=\eps_0^2\eps^{-1}$.

To check this, we compute the functions $D_\pm$ (see~\eqref{060608} and~\eqref{060609}). In this simple case, direct calculations give:
\eq{140201a}{
D_\pm=\frac{a_1^+-a_1^-}{2x_2}\mp6a_3x_2=\frac{6a_3}{x_2}(\eps_0^2\mp x_2^2)\,.
}
For the herringbone spines to be admissible for diagonally concave function, the integral lines must lie in the region where $D_\pm\ge0$, i.\,e., in the strip $0< x_2\le\eps_0$. Consider the extremal lines of the field generated by equation~\eqref{120205}. It is easy to do by plotting the graph of function~\eqref{120206} for any value of the constant (see Fig.~\ref{130204}).

\begin{figure}[h]
    \centering
   \includegraphics[scale = 0.55]{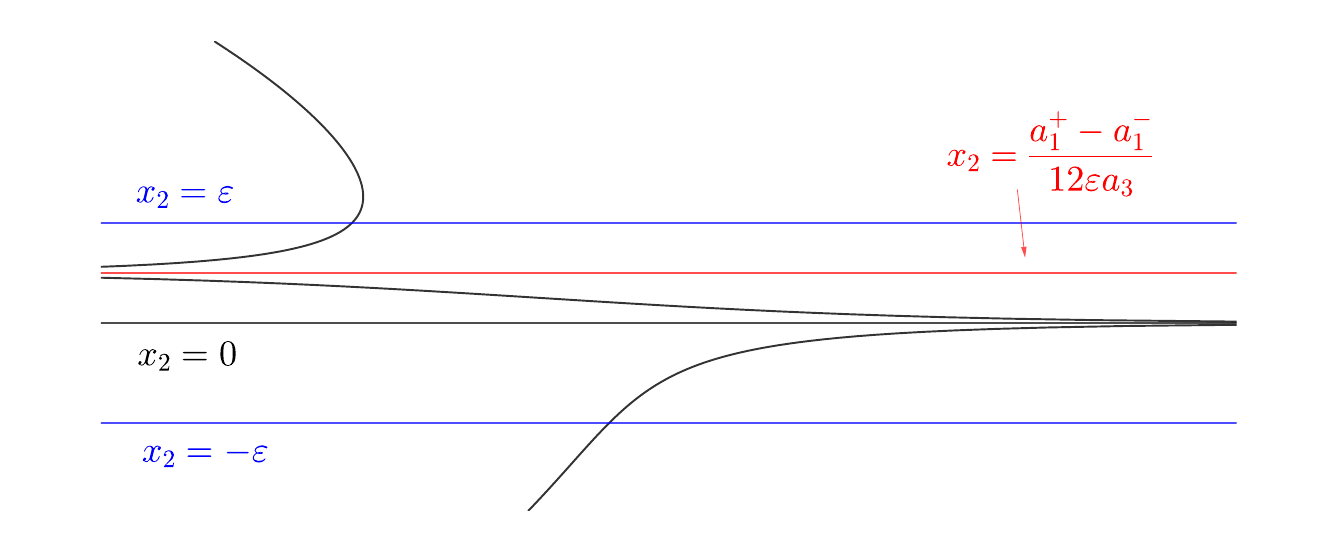}
    \caption{The graph of one of the curves~\eqref{120206}.}
    \label{130204}
\end{figure}

To obtain an integral line passing through a given point, we must shift this graph appropriately along the $x_1$-axis. We see that the sought extremal lines foliate the strip between the two horizontal asymptotes: $0 < x_2 < \eps_0^2\eps^{-1}$. If $T\in(0,\eps_0^2\eps^{-1})$, then from~\eqref{120205} we get $T'<0$ . Moreover, $T$ exponentially decays at infinity: \eqref{120206} shows that $T(u)\sim\exp(-u/\eps)$ as $u\to +\infty$. Then, using~\eqref{040614}, we conclude that $A(u)$ increases exponentially as $u\to\infty,$ whereas for the herringbone with $T(u)=\eps_0^2\eps^{-1}$, $A$ increases polynomial. This means that only the herringbone with $T(u)=\eps_0^2\eps^{-1}$ can give the minimal diagonally concave function. Indeed, the corresponding function is the minimal diagonally concave function according to Theorem~\ref{MishasTh}.

Concluding our consideration of the case $a_2^+=a_2^-$, note that if $a_1^+<a_1^-$, then we obtain the Bellman function by taking the left herringbone with $T(u)=-\eps_0^2\eps^{-1}$. Right herringbones appear when $a_3<0$.

Now let us turn to the case $a_2^+\ne a_2^-$. In this situation, the expression
$$
f_+'(t)-f_-'(t)=2(a_2^+-a_2^-)t+(a_1^+-a_1^-)
$$
has one root at
\eq{020301}{
u_0=-\frac{a_1^+-a_1^-}{2(a_2^+-a_2^-)}\,,
}
and we can construct a fissure. Under our assumptions ($a_3>0$, $a_2^+>a_2^-$), this will be a SW-fissure, as the assumptions of Proposition~\ref{200101} are satisfied.

In addition to Proposition~\ref{200101}, we want to check that such a foliation takes place for all~$\eps$, i.\,e., the foliation of the entire strip is as follows:
\eq{230202}{
\Omega_\eps=\Om{L}(-\infty,v_-)\bigcup\Om{SW}(v_-,u_+)\bigcup\Om{R}(u_+,+\infty)\,.
}
To check this, we need to verify three facts for all $\eps$:
\begin{itemize}
\item we can construct a SW-fissure in the region $\Om{SW}(v_-,u_+)$;
\item we can construct a simple right foliation in the region $\Om{R}(u_+,+\infty)$, i.\,e., condition~\eqref{190501} is satisfied for $u\ge u_+$;
\item we can construct a simple left foliation in the region $\Om{L}(-\infty,v_-)$, i.\,e., condition~\eqref{200502} is satisfied for $u\le v_-$.
\end{itemize}

For this purpose, we need to examine the vector field~\eqref{180902} more closely. It is convenient to consider the field not only in the strip $\Omega_\eps$ but also on the entire plane. The considered vector field has four stationary points. Three of them are already known: $(u_0,0)$, $(u_+,\eps)$, and $(u_-,-\eps)$. The fourth point is the second intersection of the two parabolas $D_+=0$ and $D_-=0$ given by the equations
\eq{070302}{
x_1=u_0+x_2+\vk_+x_2^2
}
and
\eq{070302bis}{
x_1=u_0-x_2-\vk_+ x_2^2
}
respectively (see Fig.~\ref{040301}). Direct calculation shows that it has coordinates $(u_0,-\vk_+^{-1})$.

\begin{figure}[h]
    \centering
    \includegraphics[scale = 0.25]{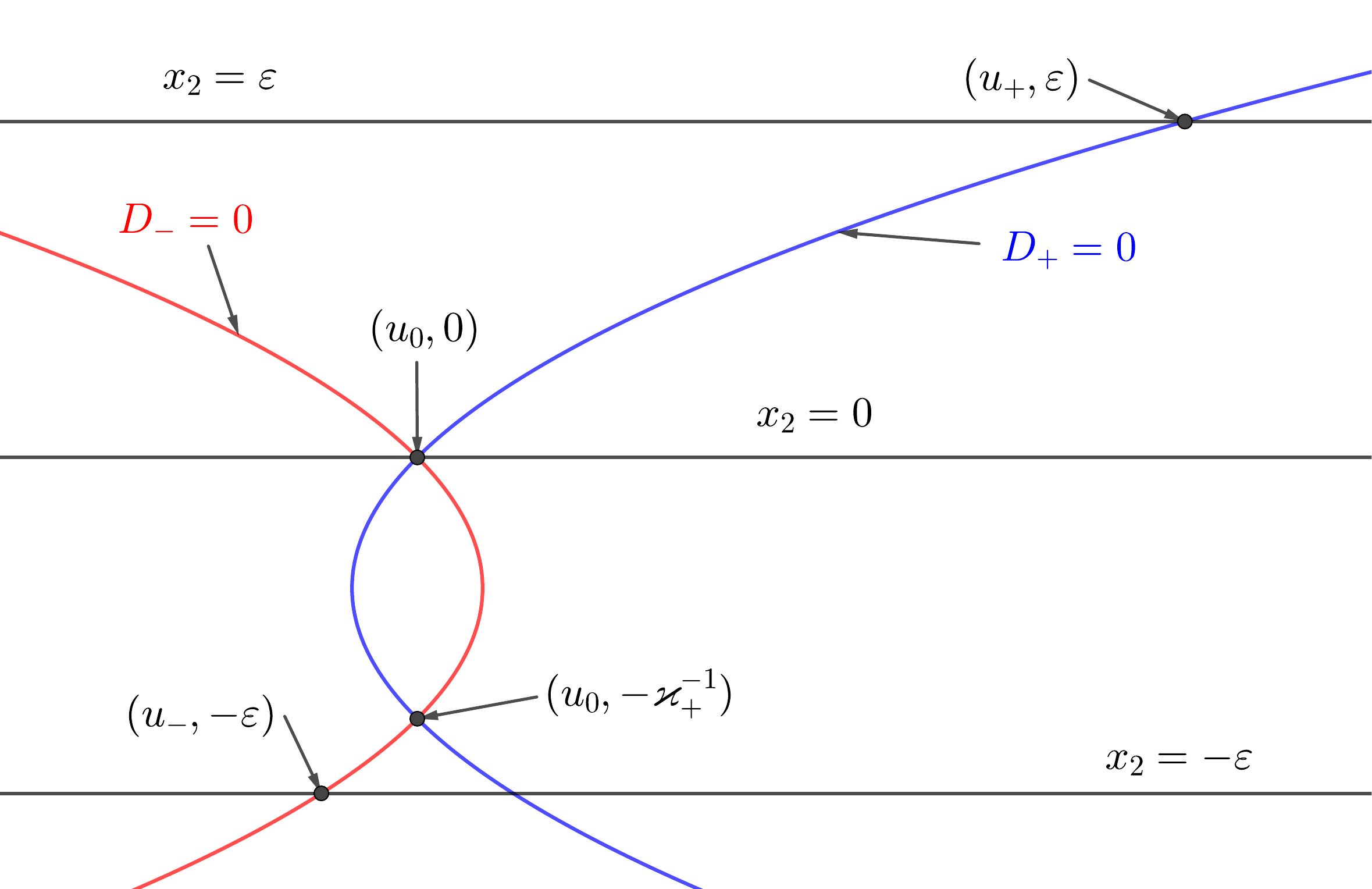}
    \caption{The curves $D_+=0$ and $D_-=0$. Four stationary points of the vector field.}
    \label{040301}
\end{figure}

To understand the behavior of integral curves around these points, we need to compute the Jacobian matrix at these points {\bf(}see~\eqref{011004}{\bf)}.
\eq{230203}{
J(x)=2(a_2^+-a_2^-)\begin{pmatrix}
\eps&
-3\vk_+x_2^2-2x_2
\\
-x_2&
2\vk_+\eps x_2-x_1+\eps+u_0
\end{pmatrix},
}
where {\bf(}see~\eqref{230704} and~\eqref{020301}{\bf)}
$$
\vk_+=\frac{6a_3}{a_2^+-a_2^-}\qquad\text{and}\qquad u_0=-\frac{a_1^+-a_1^-}{2(a_2^+-a_2^-)}\,.
$$

At $(u_0,0)$ we always have a node:
\eq{230204}{
J(u_0,0)=2(a_2^+-a_2^-)\eps\begin{pmatrix}
1&0
\\
0&1
\end{pmatrix}.
}
The point $(u_+,\eps)$ is a saddle for all $\eps$:
\eq{060301}{
J(u_+,\eps)=2(a_2^+-a_2^-)\eps\begin{pmatrix}
\;1&-2-3\eps\vk_+
\\
-1&\eps\vk_+
\end{pmatrix},
}
since $u_+=u_0+\eps+\vk_+\eps^2$ {\bf(}see~\eqref{070302}{\bf)}.
The behavior of the field at the other two stationary points is not important to our goal.

The simplest part of our task is to construct a simple right foliation in $\Om{R}(u_+,+\infty)$.
By direct calculations, we see that~\eqref{190501} turns into
\eq{270201}{
\begin{gathered}
2\eps D_+(u,\eps)=2(a_2^+-a_2^-)(u-\eps)+(a_1^+-a_1^-)-12a_3\eps^2\ge0\,;
\\
2\eps D_-(u,\eps)=2(a_2^+-a_2^-)(u+\eps)+(a_1^+-a_1^-)+12a_3\eps^2\ge0\,.
\end{gathered}
}
Note that the second inequality in~\eqref{270201} follows from the first one. 
The condition $D_+(u,\eps)>0$ holds for all $u>u_+$ because $D_+(u,\eps)$ is strictly increasing with respect to $u$, and $D_+(u_+,\eps)=0$.

Consider the integral curve of the field that starts at $(u_+,\eps)$, ends at $(v_-,-\eps)$ on the lower boundary, and corresponds to the spine of a SW-herringbone. Our reasoning will repeat the arguments from the proof of Proposition~\ref{200101}.
There, we used the fact that $\eps$ is small enough; here, we consider an arbitrary $\eps$, but we will use a specific expression for the boundary functions $f_\pm$.

To find the slope of our integral curve at $(u_+,\eps)$, we need to compute the eigenvectors of the matrix in~\eqref{060301}. Represent this matrix as
\eq{060302}{
\begin{pmatrix}
\;1&\ 1-3s
\\
-1&\!\!-1+s
\end{pmatrix},
}
where $s=1+\eps\vk_+$. The characteristic polynomial of this matrix has the form
\eq{060304}{
\lambda^2-s\lambda-2s=0\,,
}
thus the eigenvalues are $\lambda=\half(s\pm\sqrt{s^2+8s})$, and the eigenvectors are given by
\eq{190503}{
\begin{pmatrix}
s-1-\lambda
\\
1
\end{pmatrix}.
}

We choose one of the two integral curves passing through the point $(u_+,\eps)$, which, after a shift to the right by $\eps$, gives us the spine of the desired SW-herringbone. Specifically, we must take the eigenvector with a slope greater than $0$ and less than $1$. This means we take
$\lambda=\half(s-\sqrt{s^2+8s})$, which gives us the slope of the eigenvector equal to
\eq{070301}{
\frac1{s-1-\lambda}=\frac2{s-2+\sqrt{s^2+8s}}\,.
}
Compare this slope with the slope of the curve $D_+=0$. This curve is a parabola (see Fig.~\ref{040301}) given by the equation
\eqref{070302}.
Its slope is $x_2'=\frac1{1+2\vk_+x_2}$, so the slope at $(u_+,\eps)$ is
\eq{070303}{
\frac1{1+2\vk_+\eps}=\frac1{2s-1}\,,
}
which is strictly less than the expression in~\eqref{070301} for all $s>1$ (that is for all $\eps>0$).
Thus, in the upper half of the strip, the desired integral curve passes below the curve $D_+=0$ (or, equivalently, to the right of this parabola), i.\,e., in the domain where $D_+>0$. Since this curve is to the right of the parabola $D_-=0$, we have $D_->0$ on this integral curve.

The visualization of this picture is presented in Fig.~\ref{060303}, where in comparison with Fig.~\ref{040301}, we add several new lines and change the notation. The parabola $D_+=0$ (see~\eqref{070302}) is now denoted by $X_1$, meaning that the integral curve of our field intersects this line with a slope equal to 1. The same holds for the boundary $x_2=-\eps$ (except for the singular point), so it is also labeled as $X_1$. The parabola $D_-=0$ ($x_1=u_0-x_2-\vk_+x_2^2$) intersects the integral curves with a slope equal to $-1$ and therefore is denoted by $X_{-1}$. The same slope is on the upper boundary of the strip (except for the singular point). The set of points where integral curves pass horizontally is denoted by $X_0$. It consists of two lines: $x_2=0$ and $x_1=u_0+\eps+\vk_+\eps x_2$. The latter line passes through three stationary points: $(u_+,\eps)$, $(u_0,-\vk_+^{-1})$, and $(u_-,-\eps)$. The cubic parabola $X_\infty$ ($x_1=u_0+(x_2^2+\vk_+x_2^3)\eps^{-1}$) passes through all four stationary points and is the set of points where integral curves pass vertically (except for singular points). Finally, we add the integral curve that is our fissure. Starting from the point $(u_+,\eps)$ together with the parabola $X_1$, it passes below this parabola to the next stationary point $(u_0,0)$. The reasoning explaining why these two curves cannot intersect earlier and why the slope of the integral curve at the point $(u_0,0)$ is strictly less than 1 is contained in the proof of Proposition~\ref{200101}.

\begin{figure}[h]
    \centering
    \includegraphics[scale = 0.15]{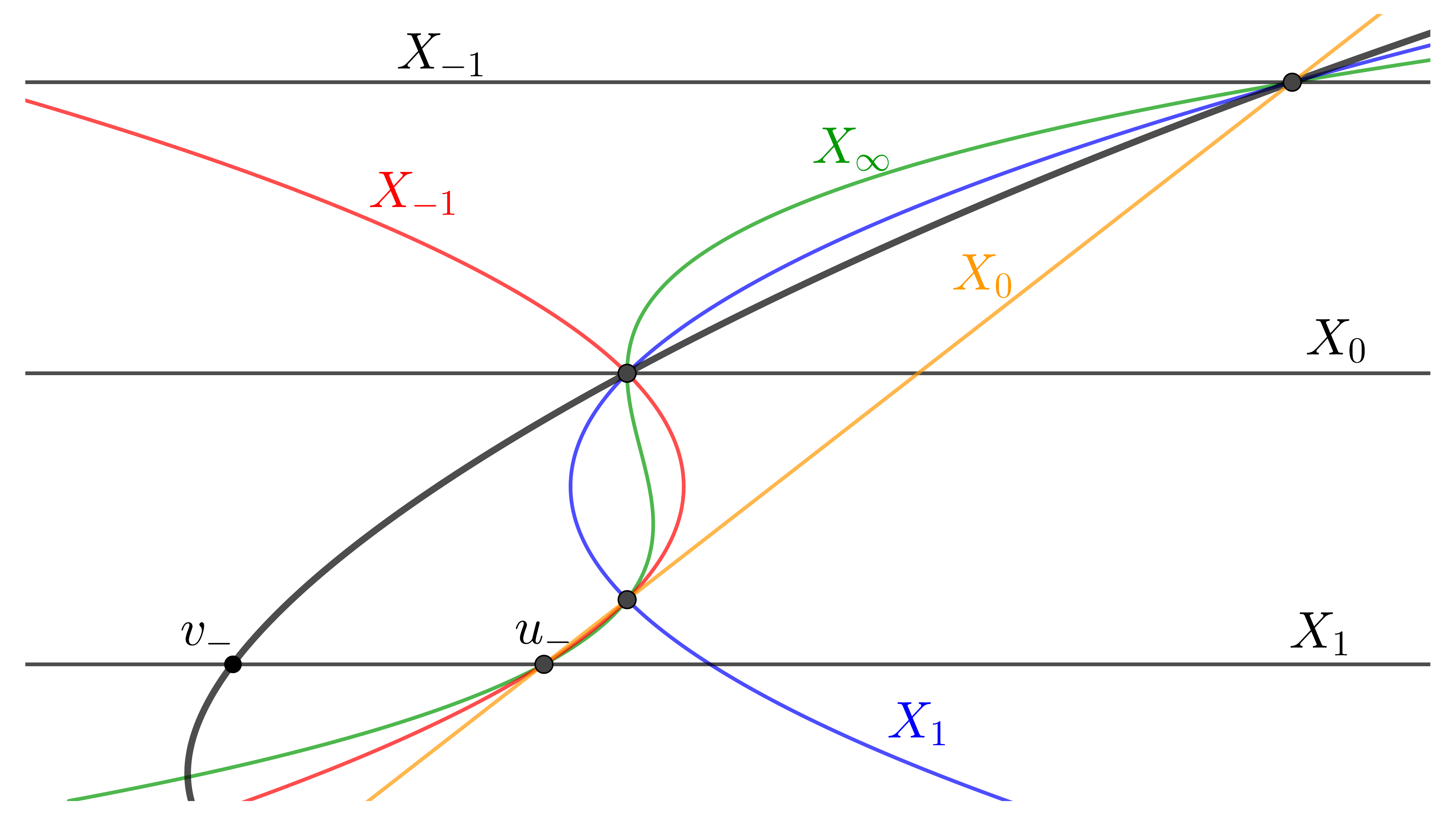}
    \caption{The integral curve of the vector field and the lines $X_0$, $X_{\pm1},$ and $X_\infty$.}
    \label{060303}
\end{figure}

We need to show that this integral curve in the lower half of the strip passes through the domain where $D_+>0$ and $D_->0$. To do this, we need to check that the end of the fissure $(v_-,-\eps)$ is to the left of the stationary point $(u_-,-\eps)$, i.\,e., $v_-<u_-$. Indeed, geometrically it is clear that to have a zero slope, our integral curve must turn: it has the slope equal to 1 at $(v_-,-\eps)$, then goes vertically when intersects $X_\infty$, then has the slope equal to $-1$  when intersects the parabola $X_{-1}$, and only then intersects the line $X_0$. All of this, of course, happens below the strip. This explains why $v_-<u_-$. This not only guarantees that this fissure generates a diagonally concave function but, as we will see shortly, it also ensures the possibility of constructing a simple left foliation in $\Om{L}(-\infty,v_-)$.

For the subsequent, we need to check  conditions~\eqref{200502} for $u\le v_-$. In our situation, they take the form
\eq{270202}{
\begin{gathered}
2\eps D_+(u,-\eps)=-2(a_2^+-a_2^-)(u+\eps)-(a_1^+-a_1^-)+12a_3\eps^2\ge0\,;
\\
2\eps D_-(u,-\eps)=-2(a_2^+-a_2^-)(u-\eps)-(a_1^+-a_1^-)-12a_3\eps^2\ge0\,.
\end{gathered}
}
Since the slope of our main curve is strictly less than 1, we have the inequality $v_-<u_0-\eps$, and the first condition in~\eqref{270202} follows. The second condition is precisely the inequality $v_-\le u_-$, which we have just verified.

Concluding the consideration of the case $a_3^+=a_3^-$, note that our SW-fissure is bounded for all~$\eps$, i.\,e., we always have $v_->-\infty$. Indeed, the description of the set $X_0$ shows that the only line $x_2=0$ can be an asymptote of the integral curve. Therefore, the spine of our herringbone must intersect the lower boundary of the strip at some finite point.

Finally, let us say a few words about the case $a_3^+\ne a_3^-$. Due to symmetry, we can consider the case $a_3^+>a_3^-$. As before, we need to investigate the roots of the polynomial
$$
f_+'(t)-f_-'(t)=3(a_3^+-a_3^-)t^2+2(a_2^+-a_2^-)t+(a_1^+-a_1^-)\,.
$$
If the discriminant of this quadratic polynomial is negative, then $f_+'(t)-f_-'(t)\ge\const>0$, and applying Proposition~\ref{061101}, we conclude that there is a simple right foliation for small $\eps$ in this case. However, as $\eps$ grows, foliations appear that we have not considered yet, so we postpone a complete study of this case.

We are also not ready to consider the foliation that arises when the discriminant of this quadratic polynomial is zero, that is, we have a multiple root of the equation~\eqref{011002}. Therefore, in the remaining part of this section, we will assume that the discriminant is positive, i.\,e., the equation~\eqref{210501} has two roots. Denote these roots as $u_{01}$ and $u_{02}$, assuming that $u_{01}<u_{02}$. The assumption $a_3^+>a_3^-$ means that $f_+''(u_{01})-f_-''(u_{01})<0$ and $f_+''(u_{02})-f_-''(u_{02})>0$. Therefore, we have three possible situations (see the illustration of these possibilities in Fig.~\ref{170301}--\ref{170303}):
\begin{itemize}
\item $a_3^->0$, and we have a NW-fissure near $u_{01}$ (by Proposition~\ref{200103}), $a_3^+>0$, and we have a~SW-fissure near $u_{02}$ (by Proposition~\ref{200101});
\item $a_3^-<0$, and we have a NE-fissure near $u_{01}$ (by Proposition~\ref{200104}), $a_3^+>0$, and we have a~SW-fissure near $u_{02}$ (by Proposition~\ref{200101});
\item $a_3^-<0$, and we have a NE-fissure near  $u_{01}$ (by Proposition~\ref{200104}), $a_3^+<0$, and we have a~SE-fissure near $u_{02}$ (by Proposition~\ref{200102}).
\end{itemize}
\begin{figure}[h]
    \centering
    \includegraphics[scale = 0.25]{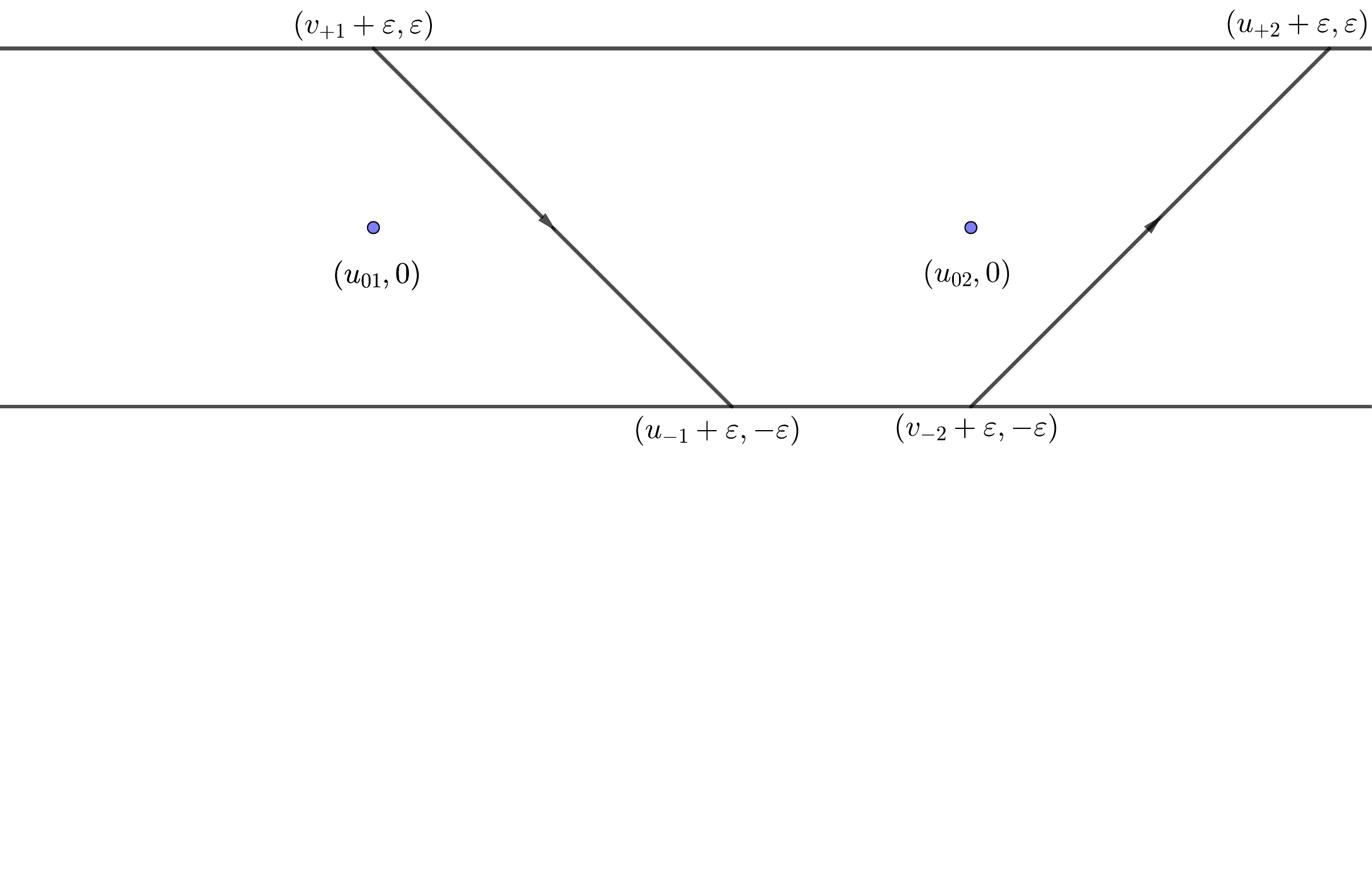}
		\vskip-90pt
    \caption{Two fissures if $a_3^->0$ and $a_3^+>0$.}
    \label{170301}
\end{figure}
\begin{figure}[h]
    \centering
    \includegraphics[scale = 0.25]{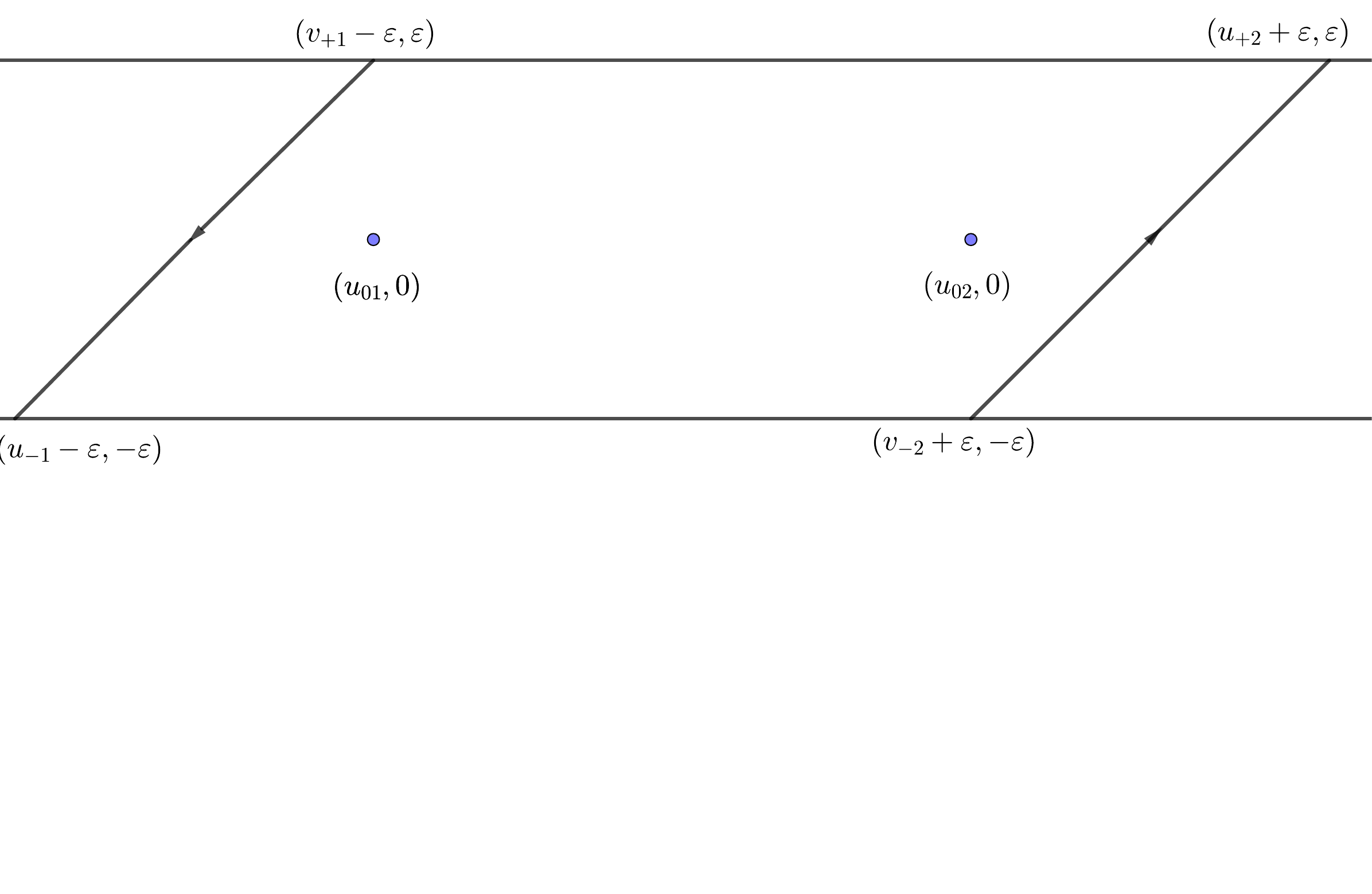}
		\vskip-90pt
    \caption{Two fissures if $a_3^-<0$ and $a_3^+>0$.}
    \label{170302}
\end{figure}
\begin{figure}[h]
    \centering
    \includegraphics[scale = 0.25]{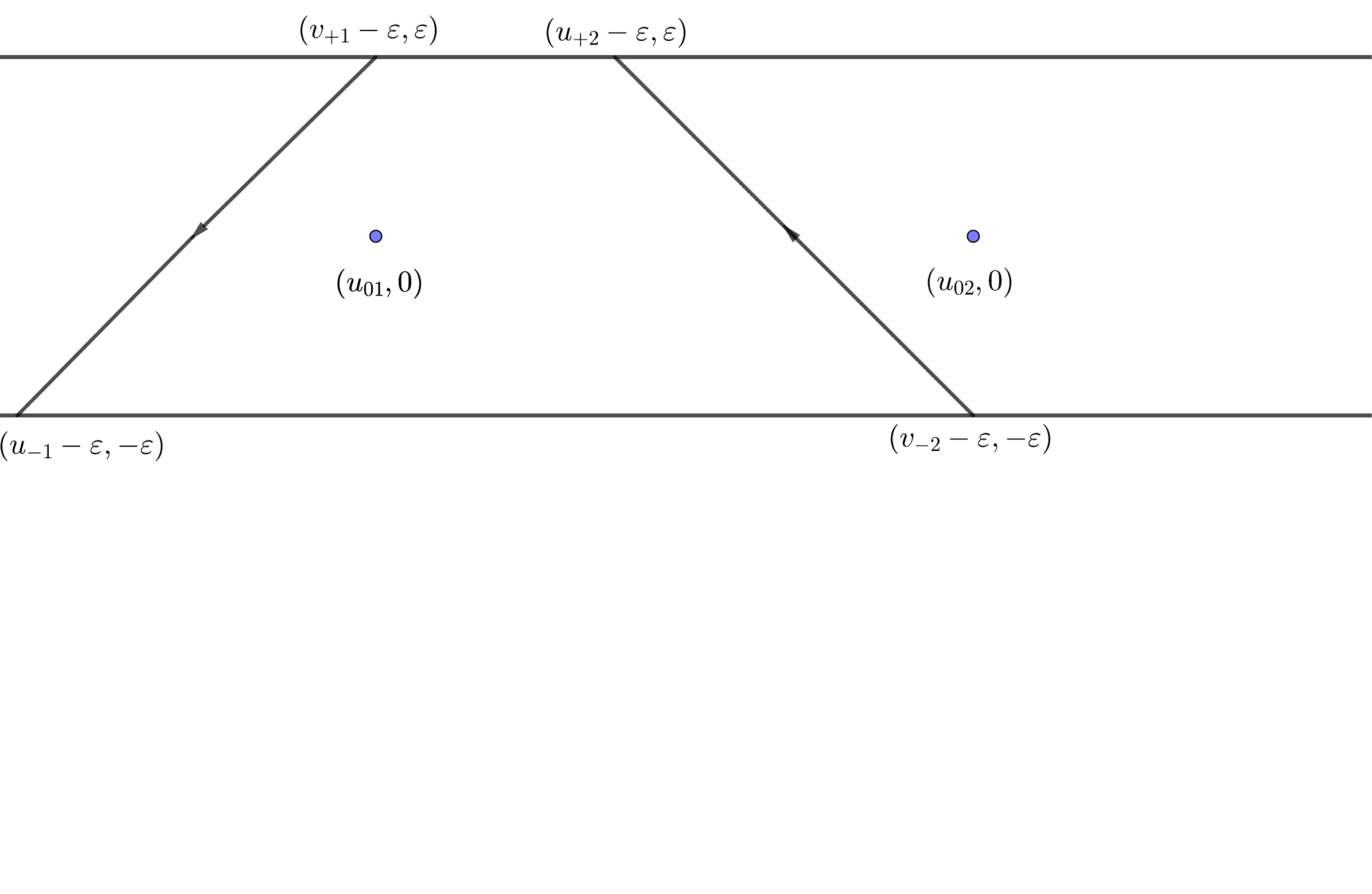}
		\vskip-90pt
    \caption{Two fissures if $a_3^-<0$ and $a_3^+<0$.}
    \label{170303}
\end{figure}
Note that the case $a_3^->0$, $a_3^+<0$ is impossible, because of the assumption $a_3^+>a_3^-$. 
Our aim is to prove that for small $\eps$ we always have the following foliation:
$$
\Oe=\Om{R}(-\infty,\alpha_1)\cup\Omega_\eps^{\scriptscriptstyle\mathrm{N}\ast}(\alpha_1,\beta_1)\cup\Om{L}(\beta_1,\alpha_2)
\cup\Omega_\eps^{\scriptscriptstyle\mathrm{S}\ast}(\alpha_2,\beta_2)\cup\Om{R}(\beta_2,+\infty)\,,
$$
where $\alpha_i$ and $\beta_i$ are some numbers close to $u_{i0}$, and instead of $\ast$ we have E or W depending on the signs of $a_3^\pm$. More precisely, we have to take
$$
\alpha_1=
\begin{cases}
v_{1+},\quad&\text{if }a_3^->0;
\\
u_{1-},\quad&\text{if }a_3^-<0,
\end{cases}
\qquad\beta_1=
\begin{cases}
u_{1-},\quad&\text{if }a_3^->0;
\\
v_{1+},\quad&\text{if }a_3^-<0,
\end{cases}
$$
and
$$
\alpha_2=
\begin{cases}
v_{2-},\quad&\text{if }a_3^+>0;
\\
u_{2+},\quad&\text{if }a_3^+<0,
\end{cases}
\qquad\beta_2=
\begin{cases}
u_{2+},\quad&\text{if }a_3^+>0;
\\
v_{2-},\quad&\text{if }a_3^+<0.
\end{cases}
$$

We need to verify the fulfillment of conditions~\eqref{190501} for $u<\alpha_1$ and $u>\beta_2$ (then we can construct $\Om{R}$), and conditions~\eqref{200502} on the interval $(\alpha_2,\beta_1)$ (then we can construct $\Om{L}$). We leave this verification to the reader, but here are several hints. All $u_\pm$ are precisely defined, and they determine the points at which the corresponding quadratic polynomials change sign. The second conditions in~\eqref{190501} and~\eqref{200502} are always weaker for small $\eps$ and are automatically satisfied. The boundaries defined by $v_\pm$ are not strict, and instead of checking the sign of the corresponding quadratic polynomials at these points, we check them at $u_{0i}$. For this purpose, it is sufficient to know that for all $\eps$, the following relations hold
$$
\begin{cases}
v_{+1}-u_{01}<-\eps,\quad&\text{if }a_3^->0;
\\
v_{+1}-u_{01}>\eps,\quad&\text{if }a_3^-<0,
\end{cases}
\qquad
\begin{cases}
v_{-2}-u_{02}<-\eps,\quad&\text{if }a_3^+>0;
\\
v_{-2}-u_{02}>\eps,\quad&\text{if }a_3^+<0,
\end{cases}
$$
because the slope of the spine lies strictly between $-1$ and $1$.

\section*{Aknowledgements}
The authors are grateful to D. Stolyarov, P. Ivanisvili, and A. Logunov, who participated in the discussion
on the questions under consideration at the initial stage.

\bigskip

V.~I.~Vasyunin

St. Petersburg State University, St. Petersburg, Russia,

vasyunin@pdmi.ras.ru

\bigskip

P.~B.~Zatitskii

University of Cincinnati, Cincinnati, OH, USA,

St. Petersburg State University, St. Petersburg, Russia,

pavelz@pdmi.ras.ru


\begin{thebibliography}{XXXXXX}


\bibitem{Burkholder1984}
D.~L. Burkholder. 
Boundary value problems and sharp inequalities for
  martingale transforms.
\emph{Ann. Prob.} 
\textbf{12} (1984), no.~3, 647--702.

\bibitem{IOSVZ2016}
P.~Ivanishvili, N.~N.~Osipov, D.~M.~Stolyarov, V.~I.~Vasyunin, and P.~B.~Zatitskiy. 
Bellman function for extremal problems in~$\mathrm{BMO}$. 
  \emph{Trans. Amer. Math. Soc.} \textbf{368} (2016), 3415--3468.


\bibitem{ISVZ2018}
P.~Ivanishvili, D.~M. Stolyarov, V.~I. Vasyunin, and P.~B.~Zatitskiy.
  Bellman function for extremal problems on~$\mathrm{BMO}$ \textup{II:}
  evolution. 
  \emph{Mem. Amer. Math. Soc.} \textbf{255} (2018), no.~1220.

\bibitem{ISVZ2023}
P.~Ivanishvili, D.~M. Stolyarov, V.~I. Vasyunin, and P.~B.~Zatitskii.
Bellman functions on simple non-convex domains in the plane.
\url{https://arxiv.org/abs/2305.03523}.
  
\bibitem{Misha} 
M.~I.~Novikov,
Sufficient conditions for the minimality of biconcave functions.
\emph{Algebra i analiz}, {\bf34} (2022), no.~5, 173--210 (in Russian); 
English translation in: \emph{St.-Petersburg Math. J}, {\bf34} (2023), no.~5.

\bibitem{Adam}
A.~Os\c{e}kowski.
Sharp martingale and semimartingale inequalities.
\emph{Monografie Matematyczne IMPAN}
{\bf 72}, Springer Basel, 2012.
  
\bibitem{SymStr} D.~Stolyarov, V.~Vasyunin, P.~Zatitskii.
Martingale transforms of bounded random variables and indicator functions of events. \url{https://arxiv.org/abs/2310.02362}.

\bibitem{SZ2016}
D.~M. Stolyarov and P.~B. Zatitskiy.
Theory of locally concave functions
  and its applications to sharp estimates of integral functionals. \emph{Adv. Math.}
  \textbf{291} (2016), 228--273.
  
\end{thebibliography}
\end{document}